 \let\mathscr\relax
\numberwithin{equation}{section}
\theoremstyle{plain} 
\newtheorem{thm}{Theorem}[section]
\numberwithin{thm}{section}
\newtheorem{cor}[thm]{Corollary}
\newtheorem{prop}[thm]{Proposition}
\newtheorem{lem}[thm]{Lemma}
\theoremstyle{definition}
\newtheorem{defn}[thm]{Definition}
\newtheorem{exmp}[thm]{Example}
\theoremstyle{remark}
\newtheorem{rem}[thm]{Remark}
\newcommand{\NN}{{\mathbb N}}
\newcommand{\bfA}{{\bf A}}
\newcommand{\ffD}{{\mathfrak D}}
\newcommand{\sA}{{\mathcal A}}
\newcommand{\sB}{{\mathcal B}}
\newcommand{\sC}{{\mathcal C}}
\newcommand{\sE}{{\mathcal E}}
\newcommand{\sF}{{\mathcal F}}
\newcommand{\sG}{{\mathcal G}}
\newcommand{\sH}{{\mathcal H}}
\newcommand{\sL}{{\mathcal L}}
\newcommand{\sP}{{\mathcal P}}
\newcommand{\sQ}{{\mathcal Q}}
\newcommand{\sV}{{\mathcal V}}
\newcommand{\sY}{{\mathcal Y}}
\newcommand{\rS}{{\rm S}}
\newcommand{\msrA}{{\mathcal{A}}}
\newcommand{\mitF}{{\mathit{F}}}
\newcommand{\vF}{{\mathit{F}}}
\newcommand{\fF}{{\mathfrak{F}}}
\newcommand{\bv}{{\bf v}}
\newcommand{\bx}{{\bf x}}
\newcommand{\by}{{\bf y}}
\newcommand{\Ker}{{\rm Ker}}
\newcommand{\pr}{{\circledast}}
\newcommand{\prn}{{\circledast_n}}
\newcommand{\Gr}{{\mathcal G}} 
\newcommand{\lcm}{{\rm lcm}} 
\newcommand{\stdnodesep}{3}
\newcommand{\doffset}{3pt}
\newcommand{\node}[3]{\rput{0}(#2){\ovalnode{#3#1}{\Large #1}}}
\newcommand{\bline}[3]{%
	\ncline[nodesepA=\stdnodesep,nodesepB=\stdnodesep]%
	{->}{#1}{#2}%
	\Bput{#3}%
}
\newcommand{\dline}[4]{%
	\ncarc[nodesepA=\stdnodesep,nodesepB=\stdnodesep,offset=\doffset]%
	{->}{#1}{#2}%
	\Aput{#3}%
	\ncarc[nodesepA=\stdnodesep,nodesepB=\stdnodesep,offset=\doffset]%
	{->}{#2}{#1}%
	\Aput{#4}%
}
\newcommand{\ddline}[4]{%
	\ncarc[nodesepA=\stdnodesep,nodesepB=\stdnodesep,offset=\doffset]%
	{->}{#1}{#2}%
	\Aput{#3}%
	\ncarc[nodesepB=\stdnodesep,nodesepA=\stdnodesep,offset=-\doffset]%
	{->}{#1}{#2}%
	\Bput{#4}%
}
\newcommand{\eeline}[5]{%
	\ncarc[nodesepA=3,nodesepB=3,offset=11pt]
	{->}{#1}{#2}%
	\Aput{#3}%
	\ncarc[nodesepA=3, nodesepB=3,offset=-10pt]%
	{->}{#1}{#2}%
	\Bput{#4}%
	\ncline[nodesepA=\stdnodesep,nodesepB=\stdnodesep]%
	{->}{#1}{#2}%
	\Aput{#5}%
}
\newcommand{\bcircle}[3]{%
	\nccircle[angleA=#2,nodesepA=\stdnodesep]{->}{#1}{20pt}%
	\Bput{#3}%
}
\numberwithin{equation}{section}
\title{Interleaving of Path Sets}
\author{William C.  Abram}
\address{ USAA , 9800 Fredericksburg Road, 
San Antonio, TX 78288}
\email{abramwc@gmail.com}
\author{ Jeffrey C. Lagarias}
\thanks{The research of the second author was supported by NSF grant 
 DMS-1701229, and a 2018 Simons Fellowship in Mathematics Award 555520.}
\address{Department of Mathematics, University of Michigan, 
Ann Arbor, MI 48109-1043,USA}
\email{lagarias@umich.edu}
\author{Daniel  Slonim}
\address{Department of Mathematics, Purdue University, 
West Lafayette, IN 47906}
\email{dslonim@purdue.edu}
\date{January 7, 2021}
\begin{document}

\begin{abstract}

Path sets  are spaces of one-sided infinite symbol sequences corresponding to the one-sided infinite walks beginning at a fixed initial 
vertex  in a directed labeled graph $\mathcal{G}$. 
Path sets are a generalization of one-sided sofic shifts.
 This paper  studies  decimation operations $\psi_{j, n}(\cdot)$ which extract symbol sequences in infinite arithmetic progressions $(\bmod\, n)$
starting with the symbol at position $j$. 
It also studies a family of $n$-ary interleaving operations $\prn$, one  for each $n \ge 1$, 
which act on an ordered set $(X_0, X_1, ..., X_{n-1})$ of one-sided symbol sequences $X_0 \pr X_1 \pr \cdots \pr X_{n-1}$ on an alphabet $\sA$,
by interleaving the symbols of each  $X_i$ in arithmetic progressions $(\bmod \, n)$,
It studies  a set of closure operations relating interleaving and decimation. 
This paper gives  basic algorithmic results on presentations of 
path sets and existence of a minimal right-resolving presentation.
It gives an algorithm for computing presentations of  decimations of path sets from 
presentations of path sets, showing the minimal right-resolving presentation of $\psi_{j,n}(X)$ has at most one more vertex
than a minimal right-resolving  presentation of $X$.
It shows  that a path set  has only finitely many distinct decimations.  
It shows the class of path sets on a  fixed alphabet is closed under interleavings and gives an algorithm to
compute presentations of interleavings of path sets. 
It studies interleaving factorizations and classifies path sets
that have infinite interleaving factorizations and gives an algorithm to recognize them.
It shows the finiteness of a  process of iterated interleaving
factorizations, which ``freezes" factors that have infinite interleavings. 
\end{abstract}

\maketitle

\tableofcontents

\section{Introduction}\label{sec:1}
Let $\mathcal{A}$ be a finite alphabet with at least two elements, and let $\mathcal{A}^\mathbb{N}$ denote the full one-sided shift on $\mathcal{A}$: that is, the space of 
one-sided infinite strings of
symbols from $\sA$. One-sided symbolic dynamics studies the action of the one-sided shift map
 $S: \sA^{\NN} \to \sA^{\NN}$ given by
  $$
   \rS(a_0a_1 a_2 \cdots) = a_1a_2a_3 \cdots.
 $$
on subsets $X \subset \sA^{\NN}$. 

This paper considers  a class of such $X$ called {\em path sets}, studied in \cite{AL14a}. 
Consider   a  finite  labeled graph $\mathcal{G} = (G, \mathcal{E})$ whose 
underlying graph $G=(V, E)$ is a directed graph with  edge set $E$ (permitting loops and multiple edges), with labeling  $\sE: E \to \mathcal{A}$ 
specifying a labeling of the graph edges 
by elements of $\mathcal{A}$, and some fixed initial vertex $v$ of $\mathcal{G}$. From $(\sG, v)$ 
 we obtain  a set $\mathcal{P}=X(\mathcal{G}, v) \subset \mathcal{A}^\mathbb{N}$
  of symbol sequences  $x=a_0a_1a_2 \cdots $ giving  the edge labels of all one-sided infinite walks in $\mathcal{G}$ that begin at the 
  marked vertex $v$. We call any such $\mathcal{P}$ 
  a \emph{path set},  and 
  we call $(\mathcal{G},v)$ 
  a \emph{presentation} of $\mathcal{P}=X(\mathcal{G}, v)$. The name path set for these objects was given  in \cite{AL14a}, where previous work was reviewed. 
      We let $\mathcal{C}(\mathcal{A})$ denote the class of all path sets on the alphabet $\mathcal{A}$.   
      Path sets are a generalization of one-sided sofic shifts in symbolic dynamics; however in general they 
      are  not invariant under the one-sided shift map.
The main feature of path sets is  
specification  of a  fixed  initial vertex $v$, which encodes an initial condition
which can  break shift-invariance in the dynamics.

  The path set concept has recently found application to  number theory, fractal geometry and  to neural networks. 
  More specifically, Abram, Bolshakov, and Lagarias \cite{ABL17}  used path sets to study 
  Hausdorff dimension of intersections of multiplicative translates of $p$-adic Cantor sets, for application to a problem of Erd\H{o}s on 
  ternary expansions of powers of $2$, see \cite{AL14b, AL14c, Lagarias09}. It turns out that these intersections are always presentable
   as $p$-adic path set fractals, a kind of geometric 
  realization of path sets inside the $p$-adic integers $\mathbb{Z}_p$ viewed as the full shift on $\{0,1,\ldots, p-1\}$. 
  In another direction  Ban and Chang \cite{BC16} show that the mosaic solution space of the initial value problem of a multi-layer 
  cellular neural network is topologically conjugate to a path set. 
   Thus, according to Ban and Chang \cite{BC16}, ``The more properties we know about path sets, the more we know about the 
   topological structure of the solution spaces
 derived from differential equations with initial conditions, and vice versa.''

  This paper studies  the effect on path sets of two families  of operations defined on  one-sided symbol sequences $\sA^{\NN}$, which are {\em decimations} and {\em interleavings}.
  Decimation and interleaving operations are initially defined  using individual elements of $\sA^{\NN}$, extend to act on arbitrary subsets $X \subseteq \sA^{\NN}$ by set union,
and then are specialized to path sets $\sP \subseteq \sA^{\NN}$ in this  paper.
The  paper \cite{ALS21} studies these operations acting on general sets  $X \subseteq \sA^{\NN}$.

Decimation operations on path sets were 
 studied by the first two authors in \cite{AL14a}. 
\begin{defn}  \label{decimationdef}
Let  $\mathcal{A}$ be a finite alphabet of symbols. 

(1) For $ j \ge 0$ the {\em $j$-th decimation operation at level $n$}, $\psi_{j,n}: \sA^{\NN} \to \sA^{\NN}$, is defined on an individual 
sequence  $\bx= x_0x_1x_2x_3 \cdots$ by
$$
\psi_{j,n}(\bx) = \by:=  x_j x_{j+n}x_{j + 2n}x_{j+3n}  \cdots.
$$

(2) The {\em $j$-th decimation  of level $n$}, denoted  $\psi_{j, n}(X)$, 
of the set $X \subset \sA^{\NN}$
is the set defined
$$
\psi_{j, n}(X) := \bigcup_{ \bx \in X} \psi_{j,n}(\bx) 
$$
\end{defn}

The definition applies for  all $j \ge 0$, but a special role is played by the operations with $0 \le j \le n-1$,
which we term {\em principal decimations}, see \eqref{eqn:basic-relation} below. The set of all  level $n$ decimation operators  for  $j \ge n$
are obtained from principal decimations by iterating the one-sided  shift operator $\rS$, 
noting that $\psi_{j+n, n}(\bx) = \rS \circ \psi_{j, n} (\bx)$. 
 
The main emphasis of the paper is the family of {\em interleaving operations}, which   comprise an  infinite  collection of 
 $n$-ary operations $(n \ge 1)$, one for each $n$,  defined for arbitrary subsets $X$ of the shift space  $\mathcal{A}^\mathbb{N}$.

\begin{defn}  \label{interleavingdef}
Let  $\mathcal{A}$ be a finite alphabet of symbols. 

(1) The  {\em $n$ fold interleaving operation} $(\pr)_{j=0}^{n-1} \sA^{\NN} \times \sA^{\NN} \times \cdots \times \sA^{\NN} \to \sA^{\NN}$ 
is defined pointwise on individual sequences: 
$\bx_{j} = a_{j,0} a_{j,1}a_{j,2} \cdots $ for $0 \le j \le n-1$ by 
$$
(\bx_0, \bx_1, \cdots, \bx_{n-1}) \mapsto  \by :=(\prn)_{j=0}^{n-1}  \bx_j  = \bx_0 \pr \bx_1 \pr \cdots \pr \bx_{n-1}= b_0b_1b_2 \cdots
$$
with symbol sequence
$$
b_{ni+k}= a_{k, i}  \quad \mbox{for} \quad i \ge 0, \,\,  0 \le k \le n-1. 
$$

(2) The \emph{$n$-fold interleaving}  $X =  (\prn)_{i=0}^{n-1} X_i$
 of the the sets 
 $X_0, X_1, X_2,\ldots, X_{n-1} \subseteq \mathcal{A}^\mathbb{N}$
in the specified  order is 
\begin{align*}
\quad\quad  (\prn)_{i=0}^{n-1}X_i & := 
 X_0 \pr X_1\pr X_2\pr \cdots \pr X_{n-1} \\
& 
 = \{(x_i)_{i=0}^{\infty}\in\mathcal{A}^{\mathbb{N}} :\, x_jx_{j+n}x_{j+2n}\ldots \in X_j \text{ for all }0\leq j\leq n-1\}.
 \end{align*}
\end{defn}

The   notation $\pr$ above does not indicate
the arity of the $n$-fold interleaving operation $\prn$; the arity of composed operations is to be inferred 
via  groupings  of parentheses.
 The  $n$-fold interleaving operations of arity $2$ are not commutative, in general 
 $X_1 \pr X_2 \ne X_2 \pr X_1$, and not associative, in general $(X_1 \pr X_2) \pr X_3 \ne X_1 \pr(X_2 \pr X_3)$.

Interleaving and decimation operators  are related by  a pointwise  identit  stating that the 
 $n$-fold interleaving of the ordered set of principal decimations at  level $n$ gives  the identity map: 
\begin{equation}\label{eqn:basic-relation}
(\prn)_{j=0}^{n-1} \psi_{j, n}(\bx) = \bx \quad \mbox{for} \quad \bx \in \sA^{\NN}.
\end{equation} 
At the set level it shows that  the level $n$ principal decimations provides a
 right-inverse to recover the individual factors  of an  $n$-fold interleaving  
 \begin{equation}\label{eqn:basic-relation2}
 X_j = \psi_{j, n} ( X_0 \pr X_1 \pr \cdots \pr X_{n-1}).
\end{equation}

\subsection{Symbolic dynamics}\label{subsec:12}

This paper studies decimation and  interleaving operations on path sets  from the viewpoint of symbolic dynamics and coding theory,  with (one-sided)  symbol sequences 
viewed inside the full one-sided shift $\mathcal{A}^\mathbb{N}$ with the shift topology, which is the product  topology where each factor $\mathcal{A}$ has the discrete topology,
so that $\mathcal{A}^\mathbb{N}$ is a compact set. Much of symbolic dynamics studies closed  sets $X$ which are shift-invariant: $\rS X= X$. 

The interleaving concept is useful in coding theory as a method for improving the burst error correction capability of a code, cf. \cite[Section 7.5]{VO89}. 
The analogue of Definition ~\ref{interleavingdef} for finite codes is referred to by coding theorists as \emph{block interleaving at depth $n$}.
Each  interleaving operation  $\pr_n$ respects the shift topology in the sense that   if  $\{ X_i: 0 \le i \le n-1\}$ are closed subsets of the one-sided shift space $\mathcal{A}^\mathbb{N}$ 
then  $X= (\prn)_{i=0}^{n-1} X_i=  X_0 \pr X_1 \pr X_2\pr \cdots \pr X_{n-1}$ is also a closed subset of $\mathcal{A}^\mathbb{N}$.
\smallskip

A great deal of study has been given to the subclasses of {\em shifts of finite type}, and of the generalization to {\em sofic shifts}, studied 
for two-sided infinite sequences in  Lind and Marcus \cite{LM95}. These are special cases of path sets. General path sets $\sP$ are not shift-invariant,
but we show in Theorem \ref{thm:weak-shift-invariant-0} below that they satisfy a weak form of shift-invariance. 

\subsection{Path sets}\label{subsec:13}

We recall the definition of path set from \cite{AL14a}. 
A \emph{pointed graph} $(\mathcal{G},v)$
over a finite alphabet $\mathcal{A}$   comprises a
finite edge-labeled directed graph
  $\mathcal{G}= (G, \sE)$  and  a distinguished vertex $v$ of 
the underlying directed graph $G$. The directed graph  $G=(V, E) $ is specified by its vertex set $V$
and (directed)  edge set $E$ with edges $e=(v_1, v_2) \in V \times V$, and the data 
$\sE\subset E \times \sA$  
specifies the set of labeled edges $(e, a)$,
with  labels 
drawn from the  alphabet $\sA$. 
We allow loops and  multiple edges, but require that all triples $(e, a) = (v_1, v_2, a)$ be distinct.
We  use interchangeably   the terms {\em vertex} and {\em state} of $G$, as in Lind and Marcus \cite[Sect.  2.2]{LM95}.
The  results of this paper regard
the finite alphabet $\sA$ as fixed, unless specifically noted otherwise.

%
%
\begin{defn} \label{de111}
A  \emph{path set}\footnote{In \cite{AL14a} the notation $X_{\sG}(v)$ was used.}
 (or {\em pointed follower set}) 
   $\sP = X(\mathcal{G}, v) $ specified by a pointed labeled graph $(\sG, v)$ 
   and a distinguished vertex $v$
  is the subset of $\sA^{\NN}$ made up  of  the symbol sequences
of  successive edge labels of all possible one-sided infinite walks in $\mathcal{G}$ 
issuing from the distinguished vertex $v$. 
\end{defn}

We let $\sC(\sA)$ denote the collection of all path sets using labels from  the alphabet $\sA$.
We call  the data $(\sG, v)$ a  {\em presentation} of the path set $\sP= X(\mathcal{G},v)$. A path  set $\sP$ typically has many different presentations.
The set $\sP$ by definition includes every symbolic path sequence with multiplicity one, although the graph $\sG$ could potentially contain many paths starting from $v$
with identical symbol sequences.

The paper \cite{AL14a} showed that  the class $\mathcal{C}(\mathcal{A})$ of all path sets $\sP$ with fixed (finite) alphabet $\mathcal{A}$ is closed under 
all decimation operations $\psi_{j,n}(\sP)$  We will give a second proof of this result in Section \ref{sec:decimation}.

This paper shows in addition that  the class $\mathcal{C}(\mathcal{A})$ of all path sets with fixed (finite) alphabet $\mathcal{A}$ is closed under 
all  the interleaving operations $\pr_n$. Conversely it shows that if a path set $\sP$ has an $n$-fold interleaving factorization, then each factor
in the factorization is necessarily a path set. 
  
In contrast, the smaller classes of one-sided sofic shifts and  of shifts of finite type are not closed under $n$-interleaving. 
interleaving can break one-sided shift-invariance even for the most well-behaved shift spaces. 
Path sets therefore appear to be a natural level of generality at which to study  interleaving operations.

%
%
\subsection{Main results}\label{subsec:14}

In this paper we study the decimation and interleaving operations restricted to
path sets. 

\subsubsection{Presentations of path sets}\label{subsec:141}

 In Section \ref{sec:prelim} we prove basic properties of presentations of path sets, 
 and discuss algorithms to test for these properties, in the language of symbolic dynamics.

 The  paper \cite[Section 3]{AL14a} showed that  every path set $\sP$ has a presentation with 
several additional properties: {\em right-resolving, reachable, and pruned}.\footnote{A labeled directed graph $\mathcal{G}$ is called \emph{right-resolving} if any two edges emanating from the same vertex have distinct labels.  A pointed graph $(\mathcal{G},v)$
 is called \emph{reachable} if there is a directed path in $\mathcal{G}$ from the initial vertex $v$ to every other vertex of $\mathcal{G}$; for a finite automaton this property is called
 {\em accessible}.  
 A reachable pointed graph 
$(\mathcal{G},v)$ is called \emph{pruned} if in  it has no sinks, meaning every vertex has out-degree at least one; for finite automata this property is called 
{\em coaccessible}. A finite automaton is \emph{trim} if it is both accessible and coaccessible, see \cite{Eilenberg74}, \cite[page 27]{PP04}.}
The right-resolving property  guarantees uniqueness of  symbolic paths:  any two distinct   paths in the graph $\sG$ starting from the initial vertex $v$ have different  symbol sequences;
 that is, for such a presentation the (finite or infinite) symbol sequence uniquely specifies the path.

 In contrast in symbolic dynamics   sofic systems need not have unique
 minimal right-resolving presentations (in the sense of sofic system presentations)
 if the sofic system is reducible, 
 compare \cite[Example 3.3.21]{LM95}.

\subsubsection{ Decimations and the shift}\label{subsec:142}
 
 The paper \cite{AL14a} showed that all decimations $\psi_{j,n}(\sP)$ of a path set are path sets,
 giving a constructive algorithm to compute a presentation of $\psi_{j,n}(\sP)$ from that of $\sP$.
 Section \ref{sec:decimation}  presents   a different construction, {\em the modified $n$-th higher power presentation},
 to obtain  presentations $\psi_{j,n}(\sP)$ which 
 require at most one vertex more than the number of vertices $m$  of the input presentation
 of $\sP$.  (The $n$-th higher power construction  is well known,  cf. \cite{[Sect. 1.4]LM95}. )

 As a first consequence of the modified higher power presentation we 
 show that 
 path sets satisfy a weak version of shift-invariance.  We say that a  set $X$ is {\em weakly shift-invariant}
 if there exist $k >  j \ge 0$ with $\rS^k X = \rS^jX$ .
 
\begin{thm}\label{thm:weak-shift-invariant-0}
{\rm (Weak shift-invariance of path sets)}
For any path set $\sP$ there exist integers $k > j \ge 0$
giving the  equality  of iterated shifts
$\rS^k \sP =  \rS^j \sP.$
\end{thm}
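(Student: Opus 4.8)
The plan is to reduce the action of the shift on $\sP$ to a purely combinatorial dynamical system on the finite vertex set of a presentation, and then invoke pigeonhole. First I would fix any presentation $(\sG, v)$ of $\sP$, where $\sG$ has finite vertex set $V$, and for each $k \ge 0$ let $R_k \subseteq V$ be the set of vertices reachable from $v$ by a directed walk of length exactly $k$ (so $R_0 = \{v\}$). For each vertex $w \in V$, write $X(\sG, w)$ for the path set rooted at $w$ in the same underlying labeled graph.

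The key structural identity I would establish is
\[
\rS^k \sP = \bigcup_{w \in R_k} X(\sG, w) \qquad (k \ge 0).
\]
This follows by decomposing an infinite walk from $v$ into a length-$k$ initial walk, ending at some $w \in R_k$, followed by an infinite walk from $w$: erasing the first $k$ labels of a sequence in $\sP$ yields the label sequence of an infinite walk issuing from such a $w$, and conversely every element of $X(\sG, w)$ with $w \in R_k$ arises this way by prepending a length-$k$ walk from $v$ to $w$. Only set equality is needed, so the possible multiplicity of distinct walks carrying a given label sequence is irrelevant here.

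Next I would observe that the sets $R_k$ evolve by the deterministic recursion $R_{k+1} = \bigcup_{w \in R_k} \mathrm{Succ}(w)$, where $\mathrm{Succ}(w)$ denotes the set of out-neighbors of $w$. This is a self-map of the finite power set $2^V$, so the orbit $R_0, R_1, R_2, \ldots$ is eventually periodic; in particular, since $2^V$ is finite while the index set is infinite, there exist integers $k > j \ge 0$ with $R_k = R_j$. Combining this with the displayed identity gives
\[
\rS^k \sP = \bigcup_{w \in R_k} X(\sG, w) = \bigcup_{w \in R_j} X(\sG, w) = \rS^j \sP,
\]
which is exactly the asserted weak shift-invariance.

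The main obstacle is really the bookkeeping in the structural identity: one must verify that $\rS^k \sP$ is precisely the union of the rooted path sets over $R_k$, neither over- nor under-counting, and that any empty $X(\sG, w)$ arising from vertices with no infinite forward walk are harmless, since they contribute nothing to the union (passing to a pruned presentation removes such vertices entirely and is the cleanest fix). I emphasize that one does not need to track the unions themselves inside a finite set; it suffices that the underlying subsets $R_k \subseteq V$ must repeat, which immediately forces the associated shift-sets to coincide. An alternative route, matching the framing of the preceding subsection, would be to extract the same finiteness from the modified $n$-th higher power presentation of $\sP$, but the direct combinatorial argument above already suffices.
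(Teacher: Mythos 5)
Your proof is correct, and it succeeds by a leaner route than the paper's. The paper proves this as Theorem~\ref{thm:weak-shift-invariant} by first establishing Theorem~\ref{thm:decimation_alg}: for every $j \ge 1$ it \emph{constructs} an explicit presentation of $\rS^j \sP$ on at most $m+1$ vertices (adding one new initial vertex whose exit edges copy the exit edges of the vertices reachable from $v$ in exactly $j$ steps), and then applies the pigeonhole principle to the finitely many labeled pointed graphs of that size. You pigeonhole one level lower: you never build presentations at all, but instead apply pigeonhole directly to the sets $R_j \subseteq V$ of vertices reachable in exactly $j$ steps, via the identity $\rS^j \sP = \bigcup_{w \in R_j} X(\sG, w)$. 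That identity is in fact the opening sentence of the paper's proof of Theorem~\ref{thm:decimation_alg} (there $R_j$ is written $V^{(j)}(\sG,v)$), so your key lemma is implicitly present in the paper; but your argument is more self-contained and yields a sharper quantitative conclusion --- a repetition is forced within the first $2^{|V|}+1$ indices, versus the paper's much larger count of distinct presentations. What the paper's heavier route buys is reusable output: the bounded-size presentations of the shifts $\rS^j\sP$ are exactly what is recycled to prove the finiteness of the full decimation set (Theorem~\ref{thm:decimation_set_bound2}), which is why the paper factors the argument through an explicit construction rather than arguing directly as you do. Two small remarks: the recursion $R_{k+1} = \bigcup_{w \in R_k} \mathrm{Succ}(w)$ and the resulting eventual periodicity are superfluous, since mere repetition $R_j = R_k$ is all your final display uses; and your closing suggestion to instead invoke the modified higher power presentation slightly misidentifies the paper's mechanism, which for this theorem is the iterated-shift construction of Theorem~\ref{thm:decimation_alg}, not the higher power construction of Theorem~\ref{thm:decimation_alg2}.
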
 

This result is proved as Theorem \ref{thm:weak-shift-invariant}.
It comes from the property that iterations of the shift operator $\rS$ are given
by $1$-decimations. We have in general 
 $\psi_{j,n}(\sP) =  \psi_{0,n}(\rS^j \sP)$ and  particular  $\psi_{j,1}(\sP)= \rS^j \sP.$ . 
\smallskip

  %

\begin{defn} \label{def:full-decimation-set}
The {\em full decimation set}  $\ffD(X)$ of an arbitrary set $ X \subset \sA^{\NN}$ is
the set of all (principal and non-principal) decimations:
$$
\ffD(X) := \{ \psi_{j,m}(X): \, \mbox{for} \quad m \ge 1 \quad \mbox{and} \quad j \ge 0 \}. 
$$
\end{defn}

The  modified higher power construction implies finiteness for
 the set  of all decimations of a path set.

\begin{thm}\label{thm:decimation_set_bound}
{\rm ( Full decimation set bound)} 
For each path set $\sP$ its full decimation set   $\ffD(\sP)$ is a finite set. 
If $\sP$ has a presentation having $m$ vertices, then $\ffD(\sP)$ has cardinality 
bounded by
$$
|\ffD(\sP) | \le 2^{ (m+1)^2|\sA|}.
$$
\end{thm}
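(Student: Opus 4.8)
The plan is to combine the uniform vertex bound supplied by the modified higher power construction with a crude count of labeled graphs on a fixed vertex set. First I would invoke the result of Section \ref{sec:decimation}: starting from a presentation of $\sP$ on $m$ vertices, the modified $n$-th higher power construction produces, for every $n \ge 1$ and every $j \ge 0$, a presentation of the decimation $\psi_{j,n}(\sP)$ using at most $m+1$ vertices (and each such $\psi_{j,n}(\sP)$ is itself a path set). The crucial point is that this bound is the same integer $m+1$ for every pair $(j,n)$. Hence every member of the full decimation set $\ffD(\sP)$ is a path set admitting a presentation on at most $m+1$ vertices over the alphabet $\sA$, and it suffices to bound the number of distinct path sets presentable on at most $m+1$ vertices.

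Next I would normalize these presentations to a single canonical vertex set. A path set depends only on the isomorphism type of its pointed labeled graph $(\sG,v)$, so given any presentation of some $\psi_{j,n}(\sP)$ with $m' \le m+1$ vertices, I would relabel its vertices as $\{1,\ldots,m'\}$ with the initial vertex sent to $1$, and then adjoin $m+1-m'$ new vertices carrying no incident edges. The adjoined vertices are unreachable from the initial vertex and therefore contribute no infinite walks, so the path set is unchanged. Consequently every element of $\ffD(\sP)$ equals $X(\sG,1)$ for some labeled directed graph $\sG$ on the fixed vertex set $V=\{1,\ldots,m+1\}$ with initial vertex $1$.

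It then remains to count such graphs. A labeled graph on $V$ is determined by its set of labeled edges, namely a subset of the set of triples $(v_1,v_2,a)\in V\times V\times\sA$; there are $(m+1)^2|\sA|$ such triples, and each is either present or absent, giving $2^{(m+1)^2|\sA|}$ possible graphs. Since the assignment $\sG \mapsto X(\sG,1)$ is a function whose image contains all of $\ffD(\sP)$, I conclude $|\ffD(\sP)| \le 2^{(m+1)^2|\sA|}$, and in particular $\ffD(\sP)$ is finite. Note that right-resolvingness plays no role in this step: only the vertex bound is used.

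The essential ingredient, and hence the only real obstacle, is the uniform bound from the modified higher power construction, which guarantees a single constant $m+1$ that serves all decimations simultaneously; without uniformity in $(j,n)$ one could not bound the number of vertices needed. Everything afterward is bookkeeping, the one mild point being the two reductions used to absorb the choice of initial vertex and the varying vertex counts into the single parameter $m+1$, namely fixing the initial vertex to be vertex $1$ by relabeling, and padding short presentations with isolated, unreachable vertices. I would take care that padding genuinely preserves the path set, which holds because walks issuing from the initial vertex never enter an isolated vertex.
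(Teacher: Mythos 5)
Your proof is correct and follows essentially the same route as the paper's: both rely on the uniform vertex bound supplied by the modified higher power construction (together with the shift construction for the non-principal decimations $j \ge n$), and then bound $|\ffD(\sP)|$ by counting labeled directed graphs on a fixed vertex set, i.e.\ subsets of $V \times V \times \sA$, exactly as you do. Your normalization step (relabeling so the initial vertex is vertex $1$ and padding with isolated, unreachable vertices) merely makes explicit some bookkeeping the paper leaves implicit, and it correctly produces the stated exponent $2^{(m+1)^2|\sA|}$.
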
 
 
 This result is proved as Theorem \ref{thm:decimation_set_bound2}.
 In contrast, there are closed set $X \subset \sA^{NN}$ such that  $\ffD(X)$ is an infinite set, cf.  \cite[Section 7]{ALS21}.

 Theorem \ref{thm:decimation_set_bound}.
  answers a question raised in \cite[page 113]{AL14a}. That paper defined the
 $n$-kernel of a path set $\sP$ by
 $$\Ker_n(\sP) := \{ \psi_{j,n^k}(\sP) : j \ge 0, k \ge 0 \}.$$
 It called a path set {\em $n$-automatic} if $\Ker_n(\sP)$  is finite. 
  This notion was 
 proposed in analogy with the definition of {\em $n$-automatic sequences}
 made by Allouche and Shallit \cite{AS92}, \cite{AS03}. Since $\Ker_{n} (\sP) \subset \ffD(\sP)$,
 Theorem \ref{thm:decimation_set_bound} implies  that every
 path set is $n$-automatic in this sense for every $n \ge 1$.

The output presentation of the $n$-th higher power construction is not necessarily right-resolving, 
 even if the input presentation is right-resolving. However a  standard construction, the subset construction,
 shows  that if $\sP$ has a presentation with $m$ vertices,
 then each decimation $\psi_{j,n}(\sP)$ has a right-resolving presentation with at most $2^{m+1} -1$ vertices,
 an upper  bound which is independent of $n$. 
 
\begin{thm}\label{thm:decimation_presentation0}
{\rm (Right-resolving presentations of decimation sets of a path set.)}
Given a path set $\sP$ on alphabet $\sA$ with at least two letters, having a 
(not necessarily right-resolving) presentation $\sP = X(\sG, v)$  with $m$ vertices.
Then for each $n \ge 1$ and each  $j \ge 0$ the  decimation set $\psi_{j,n} (\sP)$ has a right-resolving presentation having at most $2^{m+1}-1$ vertices.
\end{thm}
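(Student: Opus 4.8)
The plan is to combine the two constructions that the paper has already set up. By Theorem~\ref{thm:decimation_presentation0}'s hypotheses and the discussion preceding it, I would first apply the \emph{modified $n$-th higher power construction} to the given presentation $\sP = X(\sG,v)$ with $m$ vertices. As stated in Section~\ref{sec:decimation}, this yields a presentation of the decimation $\psi_{j,n}(\sP)$ using at most $m+1$ vertices. The catch, also flagged in the excerpt, is that this output presentation need not be right-resolving: the higher-power edges may carry repeated labels out of a single vertex. So the first step produces a correct but possibly non-right-resolving presentation $(\sG', v')$ of $\psi_{j,n}(\sP)$ with $|V(\sG')| \le m+1$.

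The second step is to apply the \emph{subset construction} (the standard determinization/powerset automaton, as in \cite[Sect.~2.3]{LM95}) to $(\sG',v')$ to make it right-resolving. First I would recall the construction: the vertices of the new graph are nonempty subsets $W \subseteq V(\sG')$, the initial vertex is the singleton $\{v'\}$, and for each label $a \in \sA$ there is a single edge $W \xrightarrow{a} W'$ where $W'$ is the set of all vertices reachable from some vertex of $W$ along an $a$-labeled edge; we keep only those subsets reachable from $\{v'\}$. By construction this graph is right-resolving (one $a$-edge per vertex), and the standard equivalence argument shows it presents the same path set: a symbol sequence is the label of an infinite walk from $\{v'\}$ in the subset graph precisely when it labels an infinite walk from $v'$ in $(\sG',v')$. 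Hence the resulting presentation is a right-resolving presentation of $\psi_{j,n}(\sP)$.

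The final step is the vertex count. The subset construction on a graph with $N$ vertices produces at most $2^N - 1$ vertices, since its states are the nonempty subsets of an $N$-element vertex set (the empty set, a sink with no outgoing edges, can be pruned because $\sP$ is nonempty on a pruned presentation, or simply discarded). Taking $N = m+1$ from the first step gives the bound
\begin{equation*}
|\ffD(\sP)|\text{-style count} \le 2^{m+1} - 1,
\end{equation*}
which is exactly the claimed bound and is visibly independent of $n$ and $j$.

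The step I expect to require the most care is verifying the correctness of the combined construction rather than the counting. The subset construction is classical for finite-word automata and for two-sided sofic shifts, but here I must confirm it behaves correctly for \emph{one-sided infinite} walks from a fixed initial vertex: specifically that no infinite walk in the subset graph ``escapes'' into a subset from which $(\sG',v')$ admits no genuine infinite walk, and conversely that every infinite walk of $(\sG',v')$ is tracked by the subsets. Since the modified higher power presentation is pruned (every vertex has out-degree $\ge 1$), each reachable nonempty subset $W$ also has out-degree $\ge 1$, so infinite walks exist from every subset state and the two notions of infinite-walk label sets coincide; checking this pruning-preservation is the one genuine verification needed. The alphabet size $|\sA| \ge 2$ hypothesis is used only to guarantee the bound $2^{m+1}-1$ is a meaningful statement about a nondegenerate alphabet and plays no essential role in the argument.
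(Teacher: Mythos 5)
Your proposal is correct and takes essentially the same route as the paper, whose proof of this statement (restated as Theorem~\ref{thm:decimation_presentation}) is precisely the two-step composite you describe: the modified higher power construction of Theorem~\ref{thm:decimation_alg2} yields a presentation of $\psi_{j,n}(\sP)$ with at most $m+1$ vertices, and the subset construction then gives a right-resolving presentation with at most $2^{m+1}-1$ vertices. The only detail worth tightening is in your final paragraph: the coincidence of infinite-walk label sets under determinization follows from a K\"onig's-lemma argument (every prefix of an infinite walk in the subset graph is realized by some finite path in the original graph, and the resulting finitely branching tree of such paths has an infinite branch), not from pruning preservation alone, though this is exactly the verification the paper itself leaves implicit.
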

 
This result is proved as Theorem \ref{thm:decimation_presentation}.

\subsubsection{ Closure of $\mathcal{C}(\sA)$ under interleavings}\label{subsec:143}

In Section \ref{sec:intalgsec} we start the study of interleaving operations on the class 
$\mathcal{C}(\sA)$ of all path sets on a fixed alphabet $\sA$.
We first show  that $\mathcal{C}(\sA)$  is closed under the interleaving operations.

\begin{thm}\label{thm:intalgthm} 
  {\rm ($\sC(\sA)$ is closed under interleaving )}
If $\mathcal{P}_0, \ldots, \mathcal{P}_{n-1}$ are path sets on the alphabet $\mathcal{A}$, then their $n$-fold interleaving 
$$X := (\prn)_{i=0}^{n-1} \sP_i= \mathcal{P}_0 \pr \mathcal{P}_1 \pr \cdots \pr \mathcal{P}_{n-1}$$
 is a path set; i.e., $X \in \sC(\sA)$,   
\end{thm}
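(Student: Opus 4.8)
The plan is to give a direct constructive proof by building a presentation of the interleaving $X = \sP_0 \pr \sP_1 \pr \cdots \pr \sP_{n-1}$ out of presentations of the factors $\sP_i$. First I would fix, for each $i$ with $0 \le i \le n-1$, a presentation $(\sG_i, v_i)$ of $\sP_i$, where $\sG_i = (G_i, \sE_i)$ has vertex set $V_i$. The goal is to describe a single pointed graph $(\sH, w)$ whose one-sided infinite walks, read off as edge labels, are exactly the sequences $b_0 b_1 b_2 \cdots$ satisfying $b_k b_{k+n} b_{k+2n} \cdots \in \sP_k$ for each $0 \le k \le n-1$, as demanded by Definition~\ref{interleavingdef}(2).

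The key idea is a product-with-phase construction. I would take the vertex set of $\sH$ to be $V_0 \times V_1 \times \cdots \times V_{n-1} \times \{0, 1, \ldots, n-1\}$, where the last coordinate $k$ is a ``phase'' counter recording which factor the next symbol belongs to $(\bmod\, n)$. From a state $(u_0, \ldots, u_{n-1}, k)$, I put an edge labeled $a$ to $(u_0', \ldots, u_{n-1}', k+1 \bmod n)$ precisely when there is an edge $u_k \to u_k'$ in $\sG_k$ labeled $a$, and $u_i' = u_i$ for all $i \ne k$; that is, at phase $k$ we advance only the $k$-th factor's walk by reading one of its symbols, leaving the other components untouched, and then increment the phase. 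The initial vertex is $w := (v_0, v_1, \ldots, v_{n-1}, 0)$. An infinite walk from $w$ then visits phases $0, 1, \ldots, n-1, 0, 1, \ldots$ cyclically, and the subsequence of symbols read at the phase-$k$ steps is exactly a walk in $\sG_k$ from $v_k$; hence its label sequence lies in $\sP_k$. Conversely any choice of compatible walks in the factors assembles into a walk in $\sH$. This establishes the set equality $X(\sH, w) = X$.

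I would then verify the two directions of the containment carefully. For the forward direction, given an infinite walk in $\sH$ from $w$ with label sequence $b_0 b_1 b_2 \cdots$, the phase coordinate forces the step producing $b_{ni+k}$ to advance only the $k$-th factor, so the symbols $b_k b_{k+n} b_{k+2n} \cdots$ trace a genuine infinite walk from $v_k$ in $\sG_k$ and thus $\psi_{k,n}(b_0 b_1 \cdots) \in \sP_k$. For the reverse direction, given any $\bx = b_0 b_1 \cdots$ with $\psi_{k,n}(\bx) \in \sP_k$ for all $k$, I pick for each $k$ an infinite walk $W_k$ in $\sG_k$ from $v_k$ realizing $\psi_{k,n}(\bx)$, and interleave their vertices according to the phase rule to produce a walk in $\sH$ realizing $\bx$. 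Since $\sH$ is a finite labeled graph with a distinguished vertex, $X(\sH, w)$ is by definition a path set, giving $X \in \sC(\sA)$.

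The main obstacle, or rather the point requiring the most care, is not the construction itself but the bookkeeping that each phase-$k$ step modifies only the $k$-th coordinate while the others are frozen; this is what guarantees that the decimated subsequences are independent walks in the respective factor graphs and do not interfere. I expect the product graph to have $|V_0| \cdots |V_{n-1}| \cdot n$ vertices, and while the statement as phrased asks only for existence (so I need not optimize), I would remark that reachability and pruning can be restored afterward by the standard trimming operations discussed in Section~\ref{sec:prelim} without leaving the class of path sets. A subtlety worth checking is that distinct walks in $\sH$ can yield the same label sequence, but this is harmless: a path set is defined as a \emph{set} of label sequences with multiplicity one, so the construction need not be right-resolving to certify that $X$ is a path set.
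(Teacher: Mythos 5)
Your proposal is correct and takes essentially the same route as the paper's own proof (the interleaving pointed graph product of Theorem 4.2): where you track the active factor with an explicit phase coordinate in $V_0 \times \cdots \times V_{n-1} \times \{0,\ldots,n-1\}$, the paper encodes the phase by cyclically rotating the coordinate tuple, and the two constructions give isomorphic labeled graphs with $n\prod_{i} k_i$ vertices, verified by the same two-inclusion argument of interleaving and decimating walks. Your closing remark that right-resolving is not needed to certify membership in $\sC(\sA)$ is also consistent with the paper, which treats right-resolving and pruning only as additional preserved properties.
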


This result is proved as Theorem \ref{thm:intalgthm2}.
We establish Theorem \ref{thm:intalgthm} 
as a corollary of an effective   algorithmic construction of 
the $n$-fold interleaving $\sP$ at the level of presentations of path sets.


\begin{thm}\label{thm:algthm} 
{\rm(Interleaving pointed graph product construction)}
 Let $n \ge 2$  and suppose $\mathcal{P}_0, \ldots, \mathcal{P}_{n-1}$ 
are path sets with given presentations $(\mathcal{G}_0,v_0), \ldots, (\mathcal{G}_{n-1},v_{n-1})$, respectively.
There exists a construction  taking as inputs these presentations and giving as output
a presentation $(\sH, \bv)$ of the $n$-fold interleaving
$X  := \mathcal{P}_0 \pr \mathcal{P}_1 \pr \cdots \pr \mathcal{P}_{n-1}.$ In particular $X= X(\sH, \bv)$
is a path set.  This construction has the following properties:
\begin{enumerate}
\item[(i)]
 If $\mathcal{G}_i$ has $k_i$ vertices for each $0 \leq i \leq n-1$, then $\mathcal{H}$ will have at most $n\prod_{i=0}^{n-1}k_i$ vertices. 
 \item [(ii)]
 If the pointed graphs $(\mathcal{G}_i, v_i)$ are right-resolving for all $0 \leq i \leq n-1$, then 
 the output pointed graph $\mathcal{H}$ will also be right-resolving.
 \item [(iii)]
 If the pointed graphs 
 $(\mathcal{G}_i, v_i)$ are pruned  for all $0 \leq i \leq n-1$, then 
 the output pointed graph $\mathcal{H}$ will also be pruned.
 \end{enumerate}
\end{thm}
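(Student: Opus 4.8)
The plan is to build $\sH$ as a \emph{phased product} of the input graphs. The guiding idea is that an infinite walk producing the interleaved sequence $b_0b_1b_2\cdots$ with $b_{ni+k}=a_{k,i}$ should cycle through the $n$ factor graphs in order: take one step in $\sG_0$, then one in $\sG_1$, \ldots, then one in $\sG_{n-1}$, then return to $\sG_0$, and repeat. Accordingly I take the vertex set of $\sH$ to be (a subset of)
$$
\{0,1,\ldots,n-1\}\times V(\sG_0)\times\cdots\times V(\sG_{n-1}),
$$
where the first coordinate $k$ is a \emph{phase} recording which factor is to be stepped next. I place an edge labeled $a$ from $(k,w_0,\ldots,w_{n-1})$ to $(k',w_0',\ldots,w_{n-1}')$ precisely when $k'\equiv k+1\pmod n$, $w_j'=w_j$ for all $j\neq k$, and $\sG_k$ has an edge labeled $a$ from $w_k$ to $w_k'$; thus each step of $\sH$ advances the phase by one and moves only inside the currently active factor. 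The initial vertex is $\bv:=(0,v_0,\ldots,v_{n-1})$, and I will take $\sH$ to be the reachable (accessible) subgraph, i.e.\ the vertices reachable from $\bv$.

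Then I would prove correctness, $X(\sH,\bv)=\sP_0\pr\cdots\pr\sP_{n-1}$, by two inclusions. For ``$\subseteq$'': an infinite walk in $\sH$ from $\bv$ visits phases $0,1,\ldots,n-1,0,\ldots$ cyclically, and reading off only the steps taken while in phase $k$ yields a walk in $\sG_k$ issuing from $v_k$, whose $k$-th coordinate evolves as a genuine walk because it is only touched at times $t\equiv k\pmod n$. Its label sequence is therefore some $\bx_k\in\sP_k$, and by the edge rule the full label sequence satisfies $b_{ni+k}=a_{k,i}$, i.e.\ it equals $\bx_0\pr\cdots\pr\bx_{n-1}$. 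For ``$\supseteq$'': given $\by=\bx_0\pr\cdots\pr\bx_{n-1}$ with each $\bx_k\in\sP_k$ the label of a walk $W_k$ in $\sG_k$ from $v_k$, I braid the $W_k$ together, using the $t$-th step of $\sH$ (for $t\equiv k\pmod n$) to perform the next unused step of $W_k$; the phase coordinate makes the braiding well defined and its label sequence is exactly $\by$.

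Properties (i)--(iii) then follow. Property (i) is immediate, since $\sH$ is a subgraph of a graph with $n\prod_i k_i$ vertices. For (ii), the edges leaving a vertex $(k,w_0,\ldots,w_{n-1})$ are in label-preserving bijection with the edges of $\sG_k$ leaving $w_k$; as $\sG_k$ is right-resolving these labels are distinct, so $\sH$ is right-resolving. For (iii), reachability holds by construction (we restricted to the accessible part), and there are no sinks: at any reachable vertex $(k,w_0,\ldots,w_{n-1})$ the vertex $w_k$ has out-degree at least one in the pruned graph $\sG_k$, and each resulting out-edge of $\sH$ lands at a still-reachable vertex, so it is retained.

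The main obstacle I anticipate is not conceptual but organizational: making the phase/index bookkeeping in the correctness proof fully precise—matching the cyclic phase counter to the arithmetic-progression indexing $b_{ni+k}=a_{k,i}$ of Definition~\ref{interleavingdef}, in particular that the $t$-th output symbol is position $\lfloor t/n\rfloor$ of $\bx_{\,t\bmod n}$—and checking that passing to the accessible subgraph (needed so that the ``pruned'' conclusion in (iii) really includes reachability) disturbs neither the vertex bound nor the right-resolving property, which it does not, since restricting to a reachable subgraph preserves both.
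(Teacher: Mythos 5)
Your proposal is correct and is essentially the paper's own construction: your explicit phase coordinate $k$ is just a bookkeeping variant of the paper's cyclically rotated vertex sets $\sV^{i}$ (the map $(k,w_0,\ldots,w_{n-1})\mapsto (w_k,w_{k+1},\ldots,w_{k-1})\in\sV^{k}$ is a label-preserving isomorphism onto the paper's graph), and your two-inclusion correctness argument by braiding/unbraiding walks, as well as the proofs of (i)--(iii), run exactly as in the paper. The only minor deviation is that you pass to the reachable subgraph, which the paper does not (it even remarks that its product need not be reachable); this is harmless and affects neither the vertex bound nor the right-resolving and pruned properties.
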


This result is proved as Theorem \ref{thm:algthm2}.
Theorem \ref{thm:algthm} constructs $\sH$ by  a graph-theoretic construction, termed here the {\em $n$-fold interleaved (pointed)  graph product},
which takes as input   pointed graphs $(\sG_i, v_i)$, for $0 \le i \le n-1$  and produces  as output  a pointed graph
\begin{equation*}
(\sH, \bv) := (\prn)_ {i=0}^{n-1}( \sG_i, v_i),
\end{equation*}
such that the underlying path set $\sP =X(\sH, \bv) $ is the $n$-fold interleaving of the path sets $\sP_i = X(\sG_i, v_i)$.
The  presentation 
 $(\sH, v)$  found by the construction depends  on the input presentations of $\sP_i$.
We provide examples showing  that minimality of  right-resolving presentations is not always preserved under the  interleaved pointed graph product.

\subsubsection{ Decimations and Interleaving Factorizations  }\label{subsec:144}
 
 Decimations and Interleaving operations together define a set of closure operations on symbol sets.
 We define the {\em $n$-fold interleaving closure} $X^{[n]} $ of a general set $X \subset \sA^{\NN}$ by
 \begin{equation}
 X^{[n]} = \psi_{0,n}(X) \pr \psi_{1,n}(X) \pr \cdots \pr \psi_{n-1, n}(X).
 \end{equation}
 These closure operations were studied in \cite{ALS21}. One always has $X \subseteq X^{[n]}$,
  the operation is idempotent: $(X^{[n]})^{[n]} = X^{[n]}$, and the operation takes closed sets to closed sets. 
  
 We say a set has an {\em $n$-fold interleaving factorization } if $X= X^{[n]}$.
 In that case  its associated {\em interleaving factors}
 are given by $X_{j,n} = \psi_{j,n}(X)$ for $0 \le j \le n-1$. These  interleaving  factors are unique, i.e.
 $X = X_0 \pr X_1 \pr \cdots \pr X_{n-1}$ and $X= Y_0 \pr Y_1 \pr \cdots \pr Y_{n-1}$ then $X_j= Y_j$
 for $0 \le j \le n-1$ as sets.

\begin{thm}\label{thm:factorthm}
{\rm ($\sC(\sA)$  is stable under $n$-fold  interleaving closure operations)} 
If $\sP$ is a path set, then for each $n \ge 1$ the $n$-fold interleaving closure $\sP^{[n]}$ is a path set.
In addition, if $\sP$ is $n$-factorizable then each of its $n$-fold intereaving factors $\sP_i= \psi_{i,n}(\sP)$  for $0 \le i\le n-1$
are path sets.
\end{thm}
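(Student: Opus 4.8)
The plan is to derive this statement as a direct corollary of the two closure results already in hand: the closure of $\sC(\sA)$ under decimations (established in \cite{AL14a} and reproven in Section~\ref{sec:decimation}, with explicit presentation bounds in Theorem~\ref{thm:decimation_presentation0}) and the closure under interleaving (Theorem~\ref{thm:intalgthm}). The key observation is that the $n$-fold interleaving closure $\sP^{[n]}$ is, by its very definition, assembled by first decimating $\sP$ and then interleaving the resulting factors, so each step lands back inside $\sC(\sA)$.

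First I would record that each principal decimation $\psi_{j,n}(\sP)$, for $0 \le j \le n-1$, is itself a path set; this is exactly the assertion that $\sC(\sA)$ is closed under decimation operations. With that in hand, the first claim follows immediately. By definition,
\[
\sP^{[n]} = \psi_{0,n}(\sP) \pr \psi_{1,n}(\sP) \pr \cdots \pr \psi_{n-1,n}(\sP),
\]
which is the $n$-fold interleaving of the $n$ path sets $\psi_{0,n}(\sP), \ldots, \psi_{n-1,n}(\sP)$. Theorem~\ref{thm:intalgthm} then guarantees that this interleaving is again a path set, so $\sP^{[n]} \in \sC(\sA)$.

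For the second claim, suppose $\sP$ is $n$-factorizable, i.e. $\sP = \sP^{[n]}$. Its interleaving factors are by definition $\sP_i = \psi_{i,n}(\sP)$ for $0 \le i \le n-1$, recovered from $\sP$ via the decimation right-inverse identity~\eqref{eqn:basic-relation2}. Since each $\sP_i$ is a decimation of the path set $\sP$, the closure under decimation again shows $\sP_i \in \sC(\sA)$. In fact this half of the statement is unconditional: the decimations $\psi_{i,n}(\sP)$ are path sets whether or not $\sP$ is $n$-factorizable, and $n$-factorizability merely supplies the context in which they serve as the factors.

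Because everything reduces to the prior closure theorems, I do not expect a genuine obstacle here; the only things to verify are the bookkeeping that the factors appearing in the definition of $\sP^{[n]}$ are exactly the decimations to which the decimation-closure result applies, and that Theorem~\ref{thm:intalgthm} is stated for an arbitrary ordered tuple of path sets (which it is). If a quantitative presentation bound were desired, one could combine the vertex bound of Theorem~\ref{thm:decimation_presentation0} with the product bound of Theorem~\ref{thm:algthm}, but this is not needed for the set-theoretic statement above.
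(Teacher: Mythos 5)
Your proposal is correct and matches the paper's own proof essentially step for step: the paper likewise proves the first claim by combining closure under decimation (Theorem~\ref{thm:decimation}) with closure under interleaving (Theorem~\ref{thm:intalgthm2}), and proves the second claim by noting that any $n$-fold interleaving factors must equal the principal decimations $\psi_{j,n}(\sP)$ (the paper cites Theorem~\ref{thm:DIF2} for this identification, which is the same uniqueness fact you invoke via the right-inverse identity), hence are path sets. Your remark that the second half is unconditional is also accurate and consistent with the paper's argument.
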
. 
  
 This result is proved as Theorem \ref{thm:factorthm2}.
 All interleaving factors are decimations $\psi_{j,n}(\sP)$ with $0 \le j \le n-1$.
We show that  if a  path set $\sP$ has an $n$-fold interleaving presentation, then there is
an improved upper bound on the size of the associated decimations  $\psi_{j,n}(\sP)$
relative to that given in Theorem \ref{thm:decimation_presentation0}.

\begin{thm}\label{thm:56a}
{\rm ( Upper bound on minimal presentation size  of $n$-fold interleaving factors ) } 
Let   $\sP$ be a path set  having $m$ vertices in its minimal right-resolving presentation. 
Suppose that $\sP$ has   an  $n$-fold interleaving factorization
$\sP = (\pr)_{j=0}^{n-1} \sP_j$. Then  each $n$-fold interleaving factor $\sP_j = \psi_{j,n}(\sP)$
has a minimal right-resolving presentation  having
at most $m$  vertices.
\end{thm}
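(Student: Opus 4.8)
The plan is to realize each interleaving factor $\sP_j$ as a decimation image of the follower-set structure of $\sP$, and then to count follower sets. Recall from Section~\ref{sec:prelim} that the minimal right-resolving presentation of a path set $\sQ$ has its vertices in bijection with the distinct nonempty \emph{follower sets} (left quotients)
$$
u^{-1}\sQ := \{ \by \in \sA^{\NN} : u\by \in \sQ\}, \qquad u \in \sA^{\ast},
$$
as $u$ ranges over the finite prefixes of elements of $\sQ$. Write $\Delta(\sQ)$ for the collection of these nonempty follower sets, so that $|\Delta(\sP)| = m$ and the desired conclusion is exactly the bound $|\Delta(\sP_j)| \le m$ for each $0 \le j \le n-1$. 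We may assume $\sP \neq \emptyset$, the empty case being trivial.

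First I would establish the key structural identity: for every prefix $u$ whose length is a multiple $qn$ of $n$, the follower set of $\sP$ factors as an interleaving of follower sets of the factors,
\begin{equation}\label{eq:derint}
u^{-1}\sP \;=\; (u^{(0)})^{-1}\sP_0 \pr (u^{(1)})^{-1}\sP_1 \pr \cdots \pr (u^{(n-1)})^{-1}\sP_{n-1},
\end{equation}
where $u^{(j)}$ denotes the length-$q$ subword of $u$ made of the symbols in positions $\equiv j \pmod n$. This comes directly from the factorization hypothesis $\sP = \sP_0 \pr \cdots \pr \sP_{n-1}$ together with the pointwise identity $\psi_{j,n}(u\by) = u^{(j)}\,\psi_{j,n}(\by)$, which holds precisely because $|u|$ is a multiple of $n$: unwinding Definition~\ref{interleavingdef} shows $u\by \in \sP$ if and only if $u^{(j)}\psi_{j,n}(\by) \in \sP_j$ for all $j$, which is exactly \eqref{eq:derint}. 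Applying the right-inverse relation~\eqref{eqn:basic-relation2} to \eqref{eq:derint}—legitimate since each factor on the right is nonempty—then yields
\begin{equation}\label{eq:psider}
\psi_{j,n}\big(u^{-1}\sP\big) = (u^{(j)})^{-1}\sP_j .
\end{equation}

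The heart of the argument is to show that \emph{every} nonempty follower set of the factor $\sP_j$ occurs on the right-hand side of \eqref{eq:psider}. Given $w \in \sA^{\ast}$ with $w^{-1}\sP_j \neq \emptyset$—so that $w$ is a prefix of some element of $\sP_j$—I would construct a prefix $u$ of an element of $\sP$ with $u^{(j)} = w$ as follows. Set $t = |w|$; for each $j' \neq j$ choose a length-$t$ prefix $w_{j'}$ of some element of the nonempty factor $\sP_{j'} = \psi_{j',n}(\sP)$ (such prefixes exist because every element of a nonempty path set is an infinite sequence), and let $u$ be the length-$tn$ word obtained by interleaving $w$ into the residue class $j$ and the $w_{j'}$ into the other classes. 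Then $u^{(j)} = w$ and each $(u^{(j')})^{-1}\sP_{j'} = w_{j'}^{-1}\sP_{j'} \neq \emptyset$, so by \eqref{eq:derint} the set $u^{-1}\sP$ is nonempty; thus $u^{-1}\sP \in \Delta(\sP)$, and \eqref{eq:psider} gives $\psi_{j,n}(u^{-1}\sP) = w^{-1}\sP_j$.

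Finally I would combine these facts through a counting map. Since $\psi_{j,n}(\cdot)$ is a well-defined operation on sets by Definition~\ref{decimationdef}(2), the assignment $\Phi_j(u^{-1}\sP) := \psi_{j,n}(u^{-1}\sP)$ depends only on the set $u^{-1}\sP$ and hence defines a function $\Phi_j$ on $\Delta(\sP)$. The previous paragraph shows $\Delta(\sP_j) \subseteq \Phi_j\big(\Delta(\sP)\big)$, so that
$$
|\Delta(\sP_j)| \;\le\; \big|\Phi_j(\Delta(\sP))\big| \;\le\; |\Delta(\sP)| \;=\; m,
$$
which is exactly the claimed bound on the number of vertices of the minimal right-resolving presentation of $\sP_j$. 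The step I expect to be the main obstacle is the surjectivity claim of the third paragraph: keeping the residue-class indices straight in the word-level interleaving defining $u$, and verifying that the auxiliary prefixes $w_{j'}$ really do keep $u^{-1}\sP$ nonempty, rather than the concluding count, which is then immediate.
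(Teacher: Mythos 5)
Your proposal is correct and takes essentially the same approach as the paper's proof (Theorem \ref{thm:56}): both arguments show that every nonempty word path set (follower set) of a factor $\sP_j$ arises as a decimation of a word path set of $\sP$, and then count, using the correspondence of Theorem \ref{thm:minimal_presentation} between vertices of the minimal right-resolving presentation and distinct nonempty word path sets. The only divergence is bookkeeping---you take prefixes whose length is a multiple of $n$ and apply $\psi_{j,n}$, whereas the paper takes prefixes of length $nk+j$ and applies $\psi_{0,n}$---a variation the paper itself notes is immaterial in Remark \ref{rem:55}.
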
 

This result is proved as Theorem \ref{thm:68}.
 

\subsubsection{Classification of  interleaving factorizations of path sets  }\label{subsec:145}

 The factorization  problem 
is the problem of finding all the possible interleaving factorizations of a path set $\sP$ under $n$-fold interleaving.

This problem is 
interesting and complicated due to the  fact  that some general sets $X \subset \sA^{\NN}$
may have factorizations for infinitely many $n \ge 1$, which we
call {\em infinitely factorizable} sets. The simplest example is  the   one-sided shift $\mathcal{A}^{\NN}$,
which  factors for all $n \ge 1$ as 
 $$
 \mathcal{A}^\mathbb{N} = (\mathcal{A}^\mathbb{N})^{(\pr n)},
  $$
  It  is a path set, and  Section \ref{sec:6} gives a structural characterization of infinitely
 factorizable path sets.

 The paper \cite{ALS21} gave a classification of the possible pattern
 of interleaving factorizations for a general set $X$, and
 a separate classification  for a general  closed set $X \subset \sA^{\NN}$,
 see Section \ref{subsec:55}.
 For a  closed set $X \subset \sA^{\NN}$, exactly one of the following holds. 
 \begin{enumerate}
\item[(i)]
$X$ is factorizable for all $n \ge 1$.
\item[(ii)]
 $X$ is $n$-factorizable for
a finite set of $n$, which are all the divisors of an integer $f= f(X) \ge 1$. 
\end{enumerate}. 
\noindent All path sets are closed sets so this classification applies to them. 
One can easily show that all allowed patterns of
 interleaving factorizations in this dichotomy  occur for path sets.

The main results of this paper about interleaving factorizations
concern the structure of factorizations of a path set $\sP$.  
They are given in Sections \ref{sec:6} and \ref{sec:factorsec}. 

\begin{enumerate}
\item[(2)]
We  characterize  infinitely factorizable path sets in terms of  the  form of their minimal
right-resolving presentation, which we call ``leveled."  (Theorem \ref{thm:63})
\item[(2)]   For the remaining finitely factorizable path sets, we obtain an upper
bound on the size of $n$ for which $n$-factorability can occur, in terms of the size of their
minimal right-resolving presentation: one has $n \ge m-1$.
Thus $f(\sP) \le m-1$. (Theorem \ref{thm:non_leveled_fact}). 
\item[(3)] We show that the process of iterated interleaving  factorization of any path set always
terminates in finitely many steps if we agree to ``freeze" any infinitely factorizable path set encountered in the
process. (Theorem \ref{thm:complete_fact0}). 
\end{enumerate}

The bound in (2)  leads to an effective algorithm for determining if a path set is infiniitely factorizable or not.
The finiteness result in (3) also follows from (2).
For  general closed sets $X$, the paper \cite{ALS21} gave  examples having 
an infinite  depth tree of recursively refined iterated factorizations.

\subsection{Related work}\label{sec:15}

 There is a large literature of related work in automata theory, semigroups and symbolic dynamics;  see the discussion in \cite[Sect. 1.2]{AL14a}.  
 
The path set concept has  previously been studied in automata theory and formal language theory,
given in different terminology.
 In \cite{AL14a} we observed that path sets  are characterized as  those $\omega$-sets recognizable by
a finite  (deterministic) B\"{u}chi automaton having  one initial state and having  every state be a  terminal state.
These sets were   characterized in Perrin and Pin \cite[Chapter III, Proposition 3.9]{PP04}
as the set of (B\"{u}chi) recognizable sets that are closed in the product topology on $\sA^{\NN}$.
The name path set is consistent with the term ``path'' in a finite  automaton
used in Eilenberg \cite[page 13]{Eilenberg74}.

The set of finite initial blocks of a path set $\sP$ forms a 
rational language (also called a regular language)$L(\sP)$  in $\sA^{\star}$, the set of all finite words in the alphabet $\sA$.
The  formal language $L(\sP)$ uniquely
characterizes the path set. 
We may call the set of formal languages 
$$
\sL(\sA)  : = \{ L(\sP): \, \sP \quad  \mbox{ a path set on alphabet} \,\, \sA\}
$$
the set of {\em path set languages}.
These languages may be  characterized  as being the prefix-closed regular languages, see  Appendix \ref{sec:A0}.
The set $\sL(\sA)$ forms 
a  strict subset of all rational  languages on the finite alphabet $\mathcal{A}$.

Special cases of  interleaving operations have  been considered in automata theory and formal languages.
Eilenberg \cite[Chapter II.3, page 20]{Eilenberg74} introduced a notion of
{\em internal shuffle product} $A \coprod B$ of two recognizable sets (= regular language)
which corresponds to $2$-interleaving. In Proposition 2.5 in Chapter II of that book, Eilenberg proved that 
the collection of recognizable sets is closed under internal shuffle product. 

Interleaving operations have been used in coding theory  in various code constructions,
see for example Vanstone and van Oorschot \cite{VO89}, Chapters 5 and 7.

One-sided path sets also appear in studies in aperiodic order. The 1989 paper of de Bruijn \cite[Sect. 5 and 6]{DeB:89}
deals mainly with two-sided infinite connector sequences, but has one-sided ``singular" examples as well. He studied these sequences in
connection with rewriting rules describing inflation and deflation for aperidoic tilings, in particular the Penrose tilings,
topics which he previously studied in \cite{DeB:81a},  \cite{DeB:81b}, \cite{DeB:81c}.

One  may consider extensions of interleaving to infinite alphabets.
The notion  of full one-sided shift based on the product topology does not
give a compact space for  infinite alphabets.
In 2014 Ott, Tomforde, and Willis \cite{OTW14} formulated a definition of full shifts on infinite alphabets
which gives a compact shift space, which may be useful for this purpose.

\subsection{Contents}\label{sec:14}

The contents of the paper are 
as follows:
\begin{enumerate}  
\item
Section ~\ref{sec:prelim} collects together  preliminary results about path sets. In particular, it  shows they are uniquely determined by their set of allowed finite initial blocks.
It gives  an effective algorithm to tell whether  two presentations $(\sG_1, v_1)$ and $(\sG_2, v_2)$ give the same path set.
  It shows the 
uniqueness (up to isomorphism) of minimal right-resolving presentations of a path set $\sP$, a fact which does not parallel the theory for sofic shifts (whose definition of right-resolving
does not require the presentation to be pointed.) 
\item
Section \ref{sec:decimation} presents an
algorithm for finding a presentation of a decimation $\psi_{j,n}(\sP)$ from a presentation of $\sP$.
It proves that all path sets are weakly shift-invariant.
 It shows that the  full set of decimations $\ffD(\sP)$ of a path set $\sP$ is a finite set.
 It gives a upper bound on the size of right resolving presentation of  all decimations $\psi_{j,n}(\sP)$ 
 that depends only
 on the size of the presentation of $\sP$.
\item
Section ~\ref{sec:intalgsec} shows that any 
$n$-fold interleaving of path sets is a path set. It gives a construction, the (pointed) interleaving graph product,
which when given presentations of the individual $\sP_i$, yields  a presentation of  the $n$-fold interleaving 
of these path sets.
It  gives examples showing the output presentations need not be
right-resolving, even when the presentations of the $\sP_i$ are minimal right-resolving.

\item
Section ~\ref{sec:5} first reviews results for decimation  and interleaving operations
acting on general sets $X \subset \sA^{\NN}$ which were shown in \cite{ALS21}. 
That paper defines  a hierarchy of closure operations
$X \mapsto X^{[n]}$, the {\em $n$-th interleaving closure} of $X$, and shows that
a set $X$ is $n$-factorizable if and only if $X^{[n]} = X$.
Second,  it restricts to path sets, and  proves  that if $\sP$ is a path set, then
so is $\sP^{[n]}$ for all $n \ge 1$. 

\item
Section ~\ref{sec:5B} recalls results from \cite{ALS21} on the structure of
the allowed sets of possible $n$-factorizations that a general set $X \subset \sA^{\NN}$
may have. It deduces that the set $\sC^{\infty}(\sA)$ of infinitely factorizable path sets is
stable under all decimation and interleaving operations. 
It proves that if a path set $\sP$ is $n$-factorizable, the
minimal right resolving presentation of such $\psi_{j, n}(\sP)$ requires no more nodes than
that of a minimal right-resolving presentation of $\sP$.
\item
Section ~\ref{sec:6} determines  the structure of infinitely factorizable closed subsets $X$ of $\sA^{\NN}$.
 It characterizes infinitely factorizable path sets in two ways: the first is a syntactic property of the infinite words in $\sP$,
 and the second characterizes the  form of their minimal right-resolving presentations.
\item
Section \ref{sec:factorsec} analyzes the structure of factorizations of finitely factorizable path sets. 
An iterated interleaving factorization is complete if all of its factors are either infinitely factorizable
or indecomposable.  It proves that every finitely factorizable path set has at least one  complete factorization. 
\item
Section \ref{sec:concluding}  discusses open questions and further work.
\item
Appendix ~\ref{sec:A0} discusses path sets from the viewpoint of  automata theory.
\item
Appendix~\ref{sec:B0} gives  a sufficient condition on a presentation of a path set $\sP$
for all of its interleaving factorizations  to be {\em self-interleaving factorizations}.
An $n$-fold  self-interleaving factorization is  one having  all factors $X_i= Z$  identical,
 for $0 \le i \le n-1$, where $Z$ may depend on $n$.
\end{enumerate} 

\medskip

{\bf Acknowledgments.}
The work of  W. Abram and D. Slonim was  facilitated by the Hillsdale College LAUREATES program, 
done by  D. Slonim under the supervision of W. Abram.

%
%
\section{Structure and presentations  of  path sets} \label{sec:prelim}

In this section we establish  basic  structural results about path sets, formulated  in the 
  terminology of symbolic dynamics in Lind and Marcus \cite{LM95}.

 We  fix a finite alphabet $\mathcal{A}$, and let $\mathcal{C}(\mathcal{A})$ denote the collection of all path sets on $\mathcal{A}$. 
 We use interchangeably the term  {\em word} or {\em block} to mean  a finite string of consecutive symbols from $\sA$, often
 viewed inside an infinite word.
 
 In this section we  show that path sets have
 a minimal right-resolving presentation, unique up to isomorphism of path set presentations.
 Equivalent results can be found in the automata theory literature, 
 which uses different terminology,  see Appendix B. 
We have included proofs to  provide a self-contained treatment in the language of symbolic dynamics. 
 In particular, we  introduce two  notions  of finite and infinite follower sets and characterize path sets
 in terms of finiteness properties of both finite and infinite follower sets. These notions  are needed
 for  later proofs.

 Path sets also  have an important characterization in terms of automata theory,
which uses different terminology. 
In automata theory path sets $\sP$ 
are  characterized as  those recognizable sets in $\sA^{\NN}$ for non-deterministic B\"{u}chi
automata which are closed sets in its product topology.
We  discuss  automata theory results in  Appendix \ref{sec:A0}. 

%
%

\subsection{Closure properties of  path sets}\label{subsec:21}

We recall basic results on the closure of path sets in the symbol topology,
and under set operations,  shown in \cite{AL14a}. 

\begin{thm}\label{thm:operations}
(\cite[Theorem 1.2]{AL14a})

(1)  Each path set $\sP$ in $\mathcal{C}(\mathcal{A})$ 
is a closed subset  in the product topology on $\mathcal{A}^\mathbb{N}$.

(2) If $\sP_1$ and $\sP_2$ are path sets, then so is $\sP_1 \cap \sP_2$.

(3) If $\sP_1$ and $\sP_2$ are path sets, then so is $\sP_1 \cup \sP_2$.
\end{thm}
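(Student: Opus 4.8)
The plan is to treat the three parts by a single compactness argument for (1) and two direct graph constructions for (2) and (3), each producing a pointed graph whose path set is the desired set.

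For part (1) I would first establish the prefix characterization of membership: fixing a presentation $\sP = X(\sG, v)$, a sequence $\bx = x_0 x_1 x_2 \cdots$ lies in $\sP$ if and only if every finite prefix $x_0 x_1 \cdots x_{N-1}$ is the label word of some finite walk in $\sG$ issuing from $v$. The forward implication is immediate. For the converse I would form the tree $T$ whose nodes are the finite walks from $v$ whose edge-label word is a prefix of $\bx$, with the parent of a walk obtained by deleting its last edge. By hypothesis $T$ contains walks of every length, so $T$ is infinite; since $\sG$ is a finite graph, each vertex has finitely many outgoing edges, so $T$ is finitely branching. König's lemma then produces an infinite branch, i.e.\ an infinite walk from $v$ with label $\bx$, whence $\bx \in \sP$. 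Closedness is then immediate: if $\bx \notin \sP$, some prefix $x_0 \cdots x_{N-1}$ is the label of no walk from $v$, and the cylinder set of all sequences beginning with that prefix is an open neighborhood of $\bx$ disjoint from $\sP$, so $\sA^{\NN} \setminus \sP$ is open.

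For part (2) I would form the label-synchronized fiber product of presentations $(\sG_1, v_1)$ and $(\sG_2, v_2)$ of $\sP_1$ and $\sP_2$. Its vertex set is $V_1 \times V_2$; it has an $a$-labeled edge from $(w_1, w_2)$ to $(w_1', w_2')$ exactly when $\sG_1$ has an $a$-labeled edge $w_1 \to w_1'$ and $\sG_2$ has an $a$-labeled edge $w_2 \to w_2'$; and its distinguished vertex is $(v_1, v_2)$. An infinite walk from $(v_1, v_2)$ projects to infinite walks in the two factors carrying the same label word, and conversely two infinite walks from $v_1$ and from $v_2$ sharing a label word assemble coordinatewise into a walk in the product. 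Hence the path set of this pointed graph is exactly $\sP_1 \cap \sP_2$. For part (3) the one subtlety is that a path set is specified by a \emph{single} initial vertex, so a plain disjoint union will not do. Instead I would take the disjoint union of $\sG_1$ and $\sG_2$, adjoin one new vertex $v_0$, and give $v_0$ an outgoing $a$-labeled edge to a vertex $w$ whenever $v_1$ has such an edge to $w$ in $\sG_1$ or $v_2$ has such an edge to $w$ in $\sG_2$, thereby copying the out-edges of both former initial vertices. With $v_0$ as the distinguished vertex, every infinite walk from $v_0$ uses a first edge that commits it to the $\sG_1$-part or the $\sG_2$-part and thereafter remains there, so its label word is the label of a walk from $v_1$ or a walk from $v_2$; and conversely every such label is realized. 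Thus the path set of $(\sH, v_0)$ is $\sP_1 \cup \sP_2$, and all three sets lie in $\sC(\sA)$.

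I expect the only genuinely delicate point to be the converse of the prefix characterization in part (1): this is where finiteness of $\sG$ enters, through König's lemma, to pass from arbitrarily long finite walks with prefix labels to a single infinite walk. Once that characterization is in hand, parts (2) and (3) reduce to routine checks that the constructed pointed graphs present the intersection and the union, respectively.
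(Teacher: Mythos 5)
Your proof is correct in all three parts. There is, however, no internal argument to compare it against: the paper states Theorem \ref{thm:operations} as a recalled result, with the citation \cite{AL14a} standing in place of a proof, so your write-up is necessarily independent of anything in this paper. In substance your constructions are the standard ones (and essentially those of the cited reference): the prefix characterization of membership combined with K\"onig's lemma --- which, as you correctly flag, is exactly where finiteness of the presenting graph enters --- yields closedness in $\sA^{\NN}$; the label-synchronized fiber product on vertex set $V_1 \times V_2$ with initial vertex $(v_1,v_2)$ presents $\sP_1 \cap \sP_2$; and the disjoint union augmented by a fresh initial vertex $v_0$ copying the out-edges of both $v_1$ and $v_2$ presents $\sP_1 \cup \sP_2$. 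One minor observation: the pointed graph you build in part (3) need not be right-resolving even when $(\sG_1,v_1)$ and $(\sG_2,v_2)$ are, since $v_0$ may acquire two out-edges with the same label; this is harmless because Definition \ref{de111} places no such requirement on presentations, but it is worth noting that obtaining a right-resolving presentation of the union requires further processing (as in Proposition \ref{thm:3.2frompaper1}). Your prefix characterization in part (1) is also a useful fact to isolate, as it is implicitly what underlies Corollary \ref{thm:initialblocks}.
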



\begin{rem}\label{rem22} 
  The collection   $\sC(\mathcal{A})$ of all path sets in $\mathcal{A}^\mathbb{N}$
 is not closed under complementation inside  $\mathcal{A}^\mathbb{N}$. 
 See \cite[Example  2.3 ]{AL14a}.
\end{rem}

%
%
\subsection{Presentations of path sets}\label{subsec:presentations}

Each path set has infinitely many presentations.  
We recall the following properties such a presentation may have.

\begin{defn}\label{def:properties}

(1) A labeled directed graph $\mathcal{G}$ is called \emph{right-resolving} if any two edges emanating from the same vertex have distinct 
labels.

(2) A  pointed graph $(\mathcal{G},v)$ is called \emph{reachable} if there is a directed path in $\mathcal{G}$ from the initial vertex $v$ to every other vertex of $\mathcal{G}$. 

(3) A pointed graph  $(\mathcal{G},v)$ is called \emph{pruned} if it has no sinks, meaning every vertex has  an exiting edge; i.e., it has out-degree at least one. 
\end{defn}

\begin{prop}\label{thm:3.2frompaper1} 
Every path set $\sP$ has a  presentation that is right-resolving, pruned and reachable.
\end{prop}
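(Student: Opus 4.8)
The plan is to start from an arbitrary presentation $(\sG, v)$ of $\sP$, guaranteed to exist by the definition of a path set, and to repair it in two stages: first produce an equivalent presentation that is reachable and pruned, and then apply a determinization (subset) construction to make it right-resolving while preserving the first two properties. We may assume $\sP \neq \emptyset$, the empty case being degenerate. The two stages interact through their ordering: pruning \emph{before} determinizing is what forces the determinized graph to remain pruned.

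\emph{Stage 1 (reachable and pruned).} Write $\sG = (G, \sE)$ with $G = (V,E)$. I would let $V'$ be the set of vertices $w \in V$ that are both reachable from $v$ and the initial vertex of at least one infinite directed walk in $\sG$, and let $\sG'$ be the labeled subgraph on $V'$ retaining every labeled edge of $\sG$ with both endpoints in $V'$. First I would check that $(\sG', v)$ still presents $\sP$: any infinite walk of $\sG'$ from $v$ is one of $\sG$, and conversely along any infinite walk $v = w_0 \to w_1 \to \cdots$ in $\sG$ each $w_i$ is reachable (via the prefix) and begins an infinite walk (the suffix), so each $w_i \in V'$ and every edge survives. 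Reachability of $(\sG',v)$ holds because each vertex on a $v$-to-$w$ walk with $w\in V'$ also begins an infinite walk through $w$, hence lies in $V'$. Pruning holds because any $w \in V'$ begins an infinite walk $w \to w' \to \cdots$ whose second vertex $w'$ is again reachable and begins an infinite walk, so $w' \in V'$ and the edge $w \to w'$ survives, giving $w$ out-degree at least one.

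\emph{Stage 2 (right-resolving via determinization).} Next I would apply the subset construction to $(\sG', v)$: the vertices of $\sH$ are the nonempty subsets $S \subseteq V'$ obtainable from $\{v\}$ by iterating the rule $\delta(S,a) = \{\, w' : \text{some } w \in S \text{ has an edge } w \xrightarrow{a} w' \text{ in } \sG'\,\}$, with an $a$-labeled edge $S \to \delta(S,a)$ included precisely when $\delta(S,a) \neq \emptyset$. Each vertex then has at most one outgoing edge per label, so $\sH$ is right-resolving; every vertex is reachable from $\{v\}$ by construction, so $(\sH,\{v\})$ is reachable; and it is pruned because any vertex $S$ is a nonempty set of vertices of the pruned graph $\sG'$, so choosing $w \in S$ and an out-edge $w \xrightarrow{a} w'$ of $\sG'$ gives $w' \in \delta(S,a) \neq \emptyset$.

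\emph{The hard part} is proving $X(\sH,\{v\}) = \sP$, and in particular the inclusion $X(\sH,\{v\}) \subseteq \sP$. Reading $x = a_0 a_1 a_2 \cdots$ in $\sH$ from $\{v\}$ produces subsets $S_0 = \{v\}$, $S_{i+1} = \delta(S_i, a_i)$, and an easy induction shows $S_i$ is exactly the set of vertices of $\sG'$ reached from $v$ by walks labeled $a_0 \cdots a_{i-1}$. If $x \in \sP$ then some infinite walk realizes it, forcing every $S_i \neq \emptyset$, so the run in $\sH$ survives and $x \in X(\sH,\{v\})$. Conversely, if $x \in X(\sH,\{v\})$ then every $S_i \neq \emptyset$, so for each $i$ there is a length-$i$ walk in $\sG'$ from $v$ labeled $a_0 \cdots a_{i-1}$; these finite walks form a tree under the prefix order that has a node at every level (hence is infinite) and is finitely branching (since $\sG'$ is finite), so by König's lemma it has an infinite branch, an infinite walk labeled $x$, giving $x \in \sP$. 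This compactness step, converting the non-vanishing of the deterministic run into an actual infinite path, is the only nontrivial point; the remaining verifications are routine bookkeeping. Combining the two stages, $(\sH, \{v\})$ is the desired right-resolving, pruned, reachable presentation of $\sP$.
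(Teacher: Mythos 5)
Your proof is correct, and it is self-contained where the paper's is not: the paper's proof of this proposition is essentially a pointer to Theorem 3.2 of \cite{AL14a} for the right-resolving step (a determinization algorithm), followed by iterative removal of stranded vertices and then of unreachable vertices. The toolkit is the same---subset construction, pruning, restriction to reachable vertices---but you reverse the order: the paper determinizes first and prunes afterwards, while you prune and restrict first, observing that the subset construction applied to an already pruned graph automatically yields a pruned, reachable, right-resolving graph (every state is a nonempty set of live vertices, hence has a successor). Both orders work, since pruning and reachability-restriction preserve the right-resolving property and edges leaving reachable vertices end at reachable vertices, so this is a matter of economy rather than necessity; note also that your one-shot pruning (keep vertices that are reachable and begin an infinite walk) computes exactly the fixed point of the paper's iterative stranded-vertex removal on the reachable part. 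The genuine added value in your write-up is the full correctness argument for the subset construction, in particular the K\"onig's-lemma step proving $X(\sH,\{v\}) \subseteq \sP$---converting nonemptiness of every state along a deterministic run into an actual infinite walk in $\sG'$---which is the nontrivial compactness content that the paper leaves buried in the citation. Your explicit exclusion of $\sP = \emptyset$ is also appropriate, since the empty path set admits no pruned presentation at all.
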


\begin{proof} Theorem 3.2 of \cite{AL14a} gives (in its proof) an algorithm which when given an arbitrary presentation $\sP= X(\sG, v)$ of a path set 
will compute
another presentation $(\sG', v')$ for $\sP$ which is right-resolving. The pruning operation  given in Section 3 of \cite{AL14a}
then iteratively  removes stranded vertices while retaining the right-resolving property. A vertex is {\em stranded} if it either
has no entering edges or  no exit edges, or both. When a stranded vertex is removed, new vertices  may become stranded, and
the operation repeats until no stranded vertices remain.
Finally a pruned, right-resolving presentation can be 
converted to a  right-resolving, pruned and reachable presentation  by further removing all vertices not reachable from $v'$. 
\end{proof}


\begin{rem}\label{rem:trim}
A presentation $(\sG, v)$ specifies a finite automaton  in the sense of Eilenberg \cite[Chapter II]{Eilenberg74}, having  a single initial state $I=\{v\}$,
where we impose the requirement that all states be terminal states: i.e., $T= V(\sG)$. (The terminal states are not specified in the path set definition.) 
In automata theory {\em right-resolving} is equivalent to the automaton being {\em deterministic}.
For a single initial state automaton {\em reachable}  is equivalent to being {\em accessible}.
For an  automaton having all states being terminal states, {\em pruned} is equivalent to being {\em co-accessible.}
Therefore the  presentation produced by Proposition \ref{thm:3.2frompaper1} is  a {\em trim} automaton,
i.e. deterministic, accessible and co-accessible.
\end{rem}

%
%
\subsection{Word follower sets and vertex follower sets}\label{subsec:23} 

The internal structure of presentations of path sets is determined by  their patterns of initial words over all paths.
We  formulate two notions of  follower set which capture this internal structure. 
These definitions are adapted from  definitions in Lind and Marcus  \cite{LM95} for two-sided infinite sequences.

The first of these notions  applies to 
general subsets $X$ of the one-sided shift, parallel to \cite[Defn. 3.2.4]{LM95}.

\begin{defn} \label{def:symbolic_follower}
(Word follower set) 
Let $X$ be a subset of the one-sided shift $\msrA^{\NN}$, and let $w=b_0b_1\cdots b_{k-1}$ be
a finite word of length $|w|= k$, allowing the empty word $\emptyset$ of length $0$. 

(1) The {\em word follower set  $\mitF_X(b)$ of an initial finite word $b$} of $X$ 
is the set of all finite blocks 
$$ \mitF_{X}(b) = \{ a\, |~ a \text{ is a finite block
such that} \,\, ba \,\, \text{is an initial block of some} \,\,x= ba x' \in X\}.$$

(2) A set $\mitF \subseteq \msrA^{\NN}$ is a {\em word follower set of $X$}  if there exists some initial block $b$ of $\mathcal{P}$ such that $\mitF= \mitF_X(b) $. 
\end{defn}

A   closed set $X$ in $\sA^{\NN}$ may possess  infinitely many different word follower sets $\mitF_X(b)$, as $b$ varies.
We  show below that   path sets $\sP=X(\sG, v)$   have only finitely many different word follower sets as $b$ varies. 
Note that the initial block set $\sB^{I}(\sP) = \mitF_{X} (\emptyset)$.  

\smallskip

The  second of these notions applies  to  presentations  $(\sG, v)$ 
 of  path sets $\sP$, parallel to  \cite[Defn. 3.3.7]{LM95}.

\begin{defn} \label{def:vertex_follower} 
(Vertex follower set)
 The  \emph{vertex follower set  $\vF(\sG,  v')$  of a vertex $v'$}
 in a labeled directed graph $\mathcal{G}$ 
 is the set of all finite words $b=b_0b_1 \ldots b_{k-1}$ that can be presented by labels of paths on $\mathcal{G}$ beginning at vertex $v'$.
\end{defn}

The next result shows for a right-resolving presentation $(\sG, v)$ 
of a path set $\sP$ that all its possible  word follower sets 
occur as vertex follower sets of the directed labeled graph $\sG$.

\begin{prop}\label{onevertexforeachfollowerset}
Let $(\mathcal{G}, v) $ be a right-resolving, 
pruned presentation of a path set $\mathcal{P}$, with $\mathcal{G}$
having $m$ vertices. 

(1) For each finite initial word $b \in \sB^{I}(\sP)$,
the word follower set $\mitF_{\sP}( b)$ equals the vertex follower set $\mitF(\sG, v')$ for some reachable vertex  $v'$ of $\mathcal{G}$. 
In particular, there are at most $m$ different word follower sets.

(2) Conversely, each vertex follower set $\mitF(\sG, v')$
of a reachable vertex occurs as the word follower set $\mitF_{\sP}( b)$ of some initial word $b \in \sB^{I}(\sP)$.
The word $b$ can be chosen to be  of length at most $m-1$. If $v'=v$ we choose $b = \emptyset$.
\end{prop}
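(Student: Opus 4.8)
The plan is to use the right-resolving hypothesis to attach to each initial word a canonical terminal vertex, and then read off both statements directly from the definitions, invoking prunedness only to turn finite readable words into genuine prefixes of infinite walks. The first thing I would record is the elementary consequence of right-resolving: from a fixed vertex there is \emph{at most one} path carrying a prescribed finite label sequence (induct on the length of the label, using that edges leaving a vertex carry distinct labels). Hence every initial word $b \in \sB^{I}(\sP)$, being the label of some path from $v$, determines a \emph{unique} such path, whose terminal vertex I call $v'(b)$; this vertex is reachable by construction.

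For part (1) I would then prove the set equality $\mitF_{\sP}(b) = \mitF(\sG, v'(b))$ by two inclusions. For ``$\subseteq$'': if $a \in \mitF_{\sP}(b)$ then $ba$ is a prefix of some $x \in \sP$, so there is a path from $v$ labeled $ba$; by the uniqueness just noted this path factors as the $b$-path to $v'(b)$ followed by an $a$-path issuing from $v'(b)$, whence $a \in \mitF(\sG, v'(b))$. For ``$\supseteq$'': given an $a$-path from $v'(b)$ ending at some vertex $w$, prunedness provides an exiting edge at every vertex, so this path extends to an infinite walk; prepending the $b$-path yields an infinite walk from $v$ whose label begins with $ba$, so $a \in \mitF_{\sP}(b)$. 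Since $\sG$ has $m$ vertices and every word follower set thus coincides with a vertex follower set of some reachable vertex, there are at most $m$ distinct word follower sets.

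For part (2) I would run the correspondence backwards. Given a reachable vertex $v'$, choose a \emph{shortest} path from $v$ to $v'$; being shortest it repeats no vertex, so it traverses at most $m$ vertices and hence at most $m-1$ edges, and its label $b$ satisfies $|b| \le m-1$. Prunedness shows $b$ extends to an infinite walk, so $b \in \sB^{I}(\sP)$, and by right-resolving this chosen path is the unique $b$-path from $v$, so $v'(b) = v'$. Part (1) then gives $\mitF_{\sP}(b) = \mitF(\sG, v')$. When $v' = v$ the empty word works, since $\mitF_{\sP}(\emptyset) = \sB^{I}(\sP) = \mitF(\sG, v)$.

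The only points that genuinely require care — the ``hard part,'' such as it is — are the uniqueness bookkeeping supplied by right-resolving (making sure the $ba$-path truly factors through $v'(b)$, so that the two notions of follower set line up), and the correct use of prunedness to upgrade a word merely \emph{readable} from a vertex into an honest initial block of $\sP$. Without the pruned hypothesis a word readable from $v'$ need not be a prefix of any element of $\sP$, and the inclusion $\mitF(\sG, v') \subseteq \mitF_{\sP}(b)$ would fail; so I would be explicit about where that hypothesis enters.
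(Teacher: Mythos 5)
Your proof is correct and follows essentially the same route as the paper's: right-resolving gives a unique path from $v$ labeled by $b$ whose terminal vertex realizes the word follower set, and prunedness upgrades words readable from a vertex into genuine initial blocks of $\sP$. You are in fact slightly more explicit than the paper at two points the paper leaves implicit --- the two-inclusion verification in part (1) and the shortest-path argument yielding the length bound $|b| \le m-1$ in part (2) --- which is fine.
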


\begin{proof}
(1) Let $b= b_1 b_2 \cdots b_k$ be a finite initial word of $\mathcal{P}$, and let $\mitF_{\sP}(b)$ be its word follower set. Since $\mathcal{G}$ is right-resolving, 
there is exactly one path on $\mathcal{G}$ beginning at
 $v$ presenting  label set  $b$. Let $v'$ be the (reachable) vertex where this path terminates. A finite word $w$ will be 
 in the word follower set of $b$ if and only if there is a directed path beginning at vertex $v'$ 
 that presents the label set  $w$, and since ($\mathcal{G}, v)$ is pruned, there exists  an infinite word $x= bwx' \in \sP$. 
 Consequently the word follower set $\mitF_{\sP}( b)$  coincides with  the vertex follower set $\mitF(\sG, v')$. 

(2)  Let $v'$ be a reachable vertex in $(\mathcal{G}, v)$, so that
there exists   a directed path $\pi$ from $v$ to $v'$, which can be chosen to have length at most $m-1$,
since there are $n$ vertices in $\sG$.
 Let $b$ be the block labeling this path, which uniquely determines $v'$ since the presentation is
 right-resolving.  As above, since $\mathcal{G}$ is pruned,  the vertex follower set $\mitF(\sG, v')$ equals the word follower set $\mitF_{\sP}( b)$.
 \end{proof}

%
%
\subsection{Finiteness of follower sets for path sets}\label{sec24}

Proposition \ref{onevertexforeachfollowerset} 
 implies  the finiteness of the number of distinct word follower sets for any path set.

\begin{thm}\label{thm:finite_symbolic}
{\em ($\sC(\sA)$ characterized by finiteness of  word follower sets)}
A set $X \subseteq \mathcal{A}^{\NN}$ is a path set if and only if it
is closed and has a finite number of distinct word follower sets.
\end{thm}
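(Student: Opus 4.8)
The statement is an "if and only if" characterizing path sets among all subsets of $\sA^{\NN}$ by two properties: being closed and having finitely many distinct word follower sets. Theorem \ref{thm:operations}(1) already tells us path sets are closed, so the forward direction reduces to the finiteness of word follower sets.

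Let me check the converse. We're given $X$ is closed with finitely many word follower sets, and we want to build a presentation. The natural construction is a "follower set graph" — vertices are the distinct word follower sets, and we draw labeled edges $F_X(b) \xrightarrow{a} F_X(ba)$. The subtlety is that closedness must ensure this actually presents $X$ and not something larger — specifically that an infinite path in the graph corresponds to an actual element of $X$. This is where I'd use that $X$ is closed: a sequence all of whose finite prefixes are initial blocks must itself be in $X$.

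Let me sketch the plan more carefully.

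\medskip

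\textbf{Proof proposal.}

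The plan is to prove the two implications separately, leaning on the structural results already established. For the forward direction, suppose $X = \sP$ is a path set. Then $\sP$ is closed by Theorem \ref{thm:operations}(1). For finiteness of word follower sets, I would invoke Proposition \ref{thm:3.2frompaper1} to fix a right-resolving, pruned, reachable presentation $(\sG, v)$ of $\sP$ with $m$ vertices, and then apply Proposition \ref{onevertexforeachfollowerset}(1), which shows that every word follower set $\mitF_{\sP}(b)$ coincides with a vertex follower set $\mitF(\sG, v')$ for some reachable vertex $v'$. Since there are at most $m$ reachable vertices, there are at most $m$ distinct word follower sets, giving finiteness immediately.

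For the converse, suppose $X \subseteq \sA^{\NN}$ is closed and has only finitely many distinct word follower sets $\mitF_X(b)$. The plan is to construct a presentation directly from these follower sets, building a \emph{follower set graph} $\sG$. I would take the vertex set to be the finite collection of distinct word follower sets, with a distinguished initial vertex $v = \mitF_X(\emptyset)$, and for each vertex $\mitF_X(b)$ and each symbol $a \in \sA$ such that $ba$ is an initial block of $X$, draw an edge labeled $a$ from $\mitF_X(b)$ to $\mitF_X(ba)$. I must first verify this is well-defined: the target $\mitF_X(ba)$ must depend only on the follower set $\mitF_X(b)$ and the symbol $a$, not on the particular word $b$, which follows because $\mitF_X(b)$ determines which symbols extend $b$ and determines $\mitF_X(ba)$ as the set of blocks $w$ with $aw \in \mitF_X(b)$.

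The main obstacle, and the crux of the argument, is showing $X(\sG, v) = X$. The inclusion $X \subseteq X(\sG, v)$ is routine: any $x = a_0 a_1 a_2 \cdots \in X$ traces out an infinite path in $\sG$ by construction. For the reverse inclusion $X(\sG, v) \subseteq X$, take any infinite walk in $\sG$ from $v$ presenting a symbol sequence $x = a_0 a_1 a_2 \cdots$. By the edge-drawing rule, every finite prefix $a_0 \cdots a_{k-1}$ is an initial block of $X$, hence lies in the topological closure of the set of points of $X$ agreeing with it on the first $k$ symbols. Here is where closedness of $X$ is essential: I would argue that since each prefix of $x$ extends to an element of $X$, the point $x$ is a limit in the product topology of points of $X$, so $x \in \overline{X} = X$. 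This limit argument — turning the combinatorial fact that all prefixes are admissible into the topological conclusion that the limit point lies in $X$ — is the step that genuinely requires the hypothesis and is the heart of the proof.
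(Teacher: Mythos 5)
Your proposal is correct and follows essentially the same route as the paper: the forward direction via Theorem \ref{thm:operations} and Propositions \ref{thm:3.2frompaper1} and \ref{onevertexforeachfollowerset}, and the converse via the follower-set graph whose vertices are the finitely many distinct word follower sets, with closedness of $X$ invoked exactly where you place it, to pass from ``all prefixes are initial blocks'' to membership of the limit point in $X$. Your explicit well-definedness check that $\mitF_X(ba)$ depends only on $\mitF_X(b)$ and $a$ is a point the paper leaves implicit, but it is the same construction.
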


\begin{proof}
Suppose first that $X= \sP$ is a path set. Then  
 $\sP$ is  closed  by Theorem \ref{thm:operations}.  
Now  $\sP$  has a right-resolving, pruned presentation $(\sG, v)$  by Theorem \ref{thm:3.2frompaper1};  let $n$ be the
number of its  vertices.  Proposition \ref{onevertexforeachfollowerset}  implies it has at most $n$ distinct word follower sets
$\mitF_{X}(b)$ as $b$ varies. Thus  $X$ is closed and has a finite number of distinct word follower sets. 

Conversely, suppose $X$ is closed and has a finite number $n$ of distinct word follower sets.
We  construct a   right-resolving presentation $X(\sG, v)$ of a path set  having $n$ vertices, and check that $X= X(\sG, v)$. 
This construction will yield  a minimal right-resolving presentation of $X$.
We give a name to each follower set in the finite list by assigning to it the minimal prefix $b$ defining it as $\mitF_{\sP}(b)$.
(We may put a total order on the alphabet $\msrA$ and use the lexicographic  order on prefixes to define ``minimal''.) 
These word follower sets $\mitF_{\sP}(b)$ will name the states of $\sG$. 
For $b= \emptyset$ we select $v= \mitF_{\sP}(\emptyset) = \sB^{I}(\sP)$ to  be the initial vertex of $\sG$. For each vertex 
$v_i := \mitF_{\sP} (b_i)$ and 
each letter
$a$ that occurs as an initial letter of some word of the follower set $ \mitF(\sP, b_i)$,    we add an exit edge with label $a$
which goes from $v_i$ to the follower set $\mitF_{\sP}(b_i a)$. Since the vertices enumerate the complete list of possible word follower
 sets, it 
 will be some vertex  $v_j= \mitF_{\sP}( b_j)$ for some $b_j$. 
(We have $\mitF_{\sP}( b_ia)=\mitF_{\sP}(b_j)$ but we may have $b_ia \ne b_j$.)  We have constructed $(\sG, v)$, and
it is right-resolving as there is at most one exit edge with a given symbol from each vertex. It is also pruned and
reachable by construction.

It remains to  show that  $X= X(\sG, v)$, which will verify  that it is a path set. 

(1) We show  the inclusion $X \subseteq X(\sG, v)$.  Take $x=a_0a_1a_2\cdots\in X$
and construct a path in $(\sG, v)$ realizing this symbol sequence. Given a finite sequence $b_k=a_0a_1\ldots a_k \in \sB^{I}(X)$,
we prove by induction on $k \ge 0$, that at the $k$-th step finds a path in $(\sG, v)$ moving  from vertex labeled $\mitF_X(b_{k-1})$  to vertex labeled $\mitF_{X}( b_{k})$
with edge symbol $a_k$. Here $b_{-1} = \emptyset$ is the base case, and both the base case and the induction step follow
by the definition of edges in $\sG$.

(2) We  show the reverse inclusion $X(\sG, v) \subseteq X$ using  the fact  that $X$ is closed. 
Each infinite symbol sequence $y= a_0a_1... a_k\cdots $ in $X(\sG, v)$, starting at vertex labeled $\mitF_{X}(\emptyset)$, 
appears on a unique path (by right-resolving property) which at step $k$ is at vertex corresponding to $\mitF_{X}(a_0a_1\cdots a_k)$ of $\sG$.  
We may prove by induction on $k$ that the finite path $b_k=a_0a_1 \ldots a_k$ is  an initial block in $X$.
The word follower set property of $\mitF_{X} (a_0a_1\cdots a_k)$ permits inductively adding the symbol $a_{k+1}$, for both
the base case  $k= -1$ and the induction step.
Since $X$ is closed by hypothesis, the infinite word $y$ belongs to $X$.
   \end{proof}

\begin{cor}\label{thm:initialblocks}
{\rm (Path sets are  characterized by initial words)} 
A path set $\sP$ is characterized by its  set $ \sB^{I}(\sP)$ of all finite initial words. That is, if two path sets have the same set of initial words, then they are identical.
\end{cor}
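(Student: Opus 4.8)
The plan is to exploit the fact, recorded in Theorem~\ref{thm:operations}(1), that every path set is a closed subset of $\sA^{\NN}$, and to show that any closed set is recovered from its set of finite initial words. Concretely, I would establish the general reconstruction formula
$$
X = \{ x = a_0a_1a_2\cdots \in \sA^{\NN} : a_0a_1\cdots a_{k-1} \in \sB^{I}(X) \text{ for all } k \ge 0 \}
$$
valid for every closed set $X \subseteq \sA^{\NN}$. Once this formula is in hand, the corollary is immediate: if two path sets $\sP_1$ and $\sP_2$ satisfy $\sB^{I}(\sP_1) = \sB^{I}(\sP_2)$, then, both being closed, each equals the same right-hand side, so $\sP_1 = \sP_2$.

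First I would verify the easy inclusion $\subseteq$: if $x \in X$, then by the definition of an initial word every finite prefix of $x$ lies in $\sB^{I}(X)$. The content is in the reverse inclusion $\supseteq$, which is where closedness is essential. Given $x = a_0a_1a_2\cdots$ all of whose finite prefixes are initial words of $X$, for each $k$ I would choose a witness $y_k \in X$ whose first $k$ symbols agree with those of $x$ (such a $y_k$ exists precisely because $a_0a_1\cdots a_{k-1} \in \sB^{I}(X)$ is the prefix of some element of $X$). The sequence $(y_k)_{k \ge 0}$ then converges to $x$ in the product topology, since $y_k$ and $x$ agree on a prefix of length $k$; as $X$ is closed, the limit $x$ lies in $X$.

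I expect the reverse inclusion to be the only substantive step, and the place where the hypothesis that $\sP$ is a path set (rather than an arbitrary subset of $\sA^{\NN}$) actually enters, through closedness via Theorem~\ref{thm:operations}(1). It is worth noting that the statement genuinely fails without closedness: a non-closed set and its topological closure share the same initial word set yet are distinct, so closedness cannot be dispensed with. As an alternative route one could instead observe that $\sB^{I}(X)$ determines every word follower set $\mitF_X(b)$, because $\mitF_X(b)$ is defined purely in terms of which finite blocks extend $b$ to an initial word of $X$; the explicit reconstruction of a path set from its word follower sets carried out in the proof of Theorem~\ref{thm:finite_symbolic} would then yield the same conclusion. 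Since the direct topological argument is shorter and avoids re-invoking the follower-set machinery, I would present that one.
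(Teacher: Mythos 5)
Your proposal is correct and follows essentially the same route as the paper: the paper's proof likewise observes that the set of sequences recoverable as limits of initial words of an arbitrary set $X$ is exactly its topological closure $\overline{X}$, and then invokes closedness of path sets to conclude $\sP = \overline{\sP}$ is determined by $\sB^{I}(\sP)$. Your write-up merely makes the paper's terse limit-set argument explicit by exhibiting the witnesses $y_k \to x$ in the product topology.
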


\begin{proof}
The limit set of the set of initial words $ \sB^{I}(X)$ of an arbitrary set $X\subseteq \sA^{\NN}$ is its 
topolgical closure $\overline{X}$. Since path sets $\sP$ are closed sets by Theorem \ref{thm:finite_symbolic}, they are detemined by  $\sB^{I}(\sP)$. 
\end{proof} 

\begin{rem}\label{rem:sofic_characterize}
 The characterization of Theorem \ref{thm:finite_symbolic} parallels a characterization of sofic shifts found by 
Ashley, Kitchens and Stafford \cite{AKS92} (cf. \cite[Theorem B.1]{AL14a}) in 1992,  
which assumes the extra condition of  one-sided  shift-invariance: 
 {\em Any  shift-invariant subset $X$ of $\msrA^{\NN}$ is a sofic shift if and only if it has only finitely many different word
follower sets $\mitF(X, b)$.}  
\end{rem}

%
%
\subsection{Minimal presentations of path sets}\label{subsec:minimal_presentations}

\begin{defn} \label{defn:minimal-pres} 
(1) A {\em minimal presentation} for a path set $\sP$ is a presentation with a minimal number of vertices.

(2) A {\em minimal right-resolving presentation}
is a right-resolving presentation having a minimal number of vertices among all right-resolving presentations. 
\end{defn}

A minimal presentation is always pruned and reachable, but it need not be right-resolving. Minimal right-resolving
presentations are sometimes not minimal presentations.

\begin{thm}\label{thm:minimal-to-RRminimal}
Let $\sP$ be a path set having a  minimal presentation having  $m$ vertices. Then $\sP$ has a minimal
right-resolving presentation having at most $2^m-1$ vertices. 
\end{thm}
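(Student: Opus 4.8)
The plan is to prove the statement by the classical subset (powerset) determinization construction: I would exhibit a single right-resolving presentation of $\sP$ with at most $2^m-1$ vertices, after which minimality of the minimal right-resolving presentation gives the bound for free, since that minimal presentation can have no more vertices than the one I construct.

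First I would start from a minimal presentation $(\sG,v)$ of $\sP$ with $m$ vertices, say $V(\sG)=\{u_1,\ldots,u_m\}$. As noted immediately after Definition~\ref{defn:minimal-pres}, a minimal presentation is automatically pruned and reachable, though in general not right-resolving. From $(\sG,v)$ I would build a pointed graph $(\sH,\{v\})$ whose vertices are the nonempty subsets $S\subseteq V(\sG)$, with distinguished vertex $\{v\}$, equipped with a single edge labeled $a$ from $S$ to
$$
\delta(S,a):=\{\,w\in V(\sG):\ (u,w,a)\in\sE\ \text{for some}\ u\in S\,\},
$$
included precisely when $\delta(S,a)\neq\emptyset$. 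Restricting to the subsets reachable from $\{v\}$, there are at most $2^m-1$ vertices, that being the number of nonempty subsets of an $m$-element set. I would then verify the three structural properties: $\sH$ is right-resolving by construction, since each vertex $S$ has at most one exit edge bearing a given label $a$; it is reachable because only subsets reachable from $\{v\}$ are kept; and it is pruned because $\sG$ is pruned, as any $u\in S$ has some exit edge $(u,w,a)\in\sE$ in $\sG$, whence $\delta(S,a)\ni w$ is nonempty and $S$ acquires an exit edge labeled $a$ in $\sH$. Thus $(\sH,\{v\})$ is a right-resolving, pruned, reachable presentation with at most $2^m-1$ vertices.

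The remaining, and main, step is to check $X(\sH,\{v\})=\sP$. By induction on word length, the subset reached from $\{v\}$ upon reading a finite word $b=a_0\cdots a_{k-1}$ is exactly the set of endpoints of paths in $\sG$ from $v$ labeled $b$; this run stays among the nonempty vertices of $\sH$ if and only if $b$ labels a path from $v$ in $\sG$, that is, if and only if $b\in \vF(\sG,v)$. Since $(\sG,v)$ is pruned, $\vF(\sG,v)=\sB^{I}(\sP)$, and the analogous statement gives $\vF(\sH,\{v\})=\sB^{I}(X(\sH,\{v\}))$; hence $\sH$ and $\sG$ have identical initial block sets. Both $\sP$ and $X(\sH,\{v\})$ are closed path sets by Theorem~\ref{thm:finite_symbolic}, so Corollary~\ref{thm:initialblocks} forces $X(\sH,\{v\})=\sP$.

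I do not expect a genuine obstacle here, as this is a standard determinization argument; the only delicate point is that agreement on finite initial blocks does not by itself force agreement on the infinite walks for arbitrary sets, and is recovered only because path sets are closed and hence determined by their initial words, which is exactly the content of Corollary~\ref{thm:initialblocks}. Finally, since $(\sH,\{v\})$ is a right-resolving presentation of $\sP$ with at most $2^m-1$ vertices, the minimal right-resolving presentation of $\sP$ has at most $2^m-1$ vertices, completing the proof.
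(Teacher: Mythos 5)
Your proof is correct and takes essentially the same approach as the paper: the paper's entire proof of this theorem is a citation of the classical subset (determinization) construction in Eilenberg, Chapter III, Sect.\ 5, Theorem 5.2, which is exactly the argument you carry out explicitly. Your detailed verification (right-resolving, pruned, reachable, the count $2^m-1$, and the identification $X(\sH,\{v\})=\sP$ via initial block sets and Corollary~\ref{thm:initialblocks}) simply fills in the details the paper delegates to that reference.
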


\begin{proof} This result is proved by the well-known subset construction in the automata theory literature,
for obtaining a minimal deterministic finite state automaton matching a finite state deterministic automaton.
It appears \cite[Chapter III, Sect. 5, Theorem 5.2]{Eilenberg74}. 
\end{proof}

Below we characterize minimal right-resolving presentations, showing
uniqueness in the process.
We first recall a  definition from symbolic dynamics, which is a one-sided shift version of a definition in  Lind and Marcus \cite[page 71, page 78]{LM95}

\begin{defn}\label{def:follower_separated} 
 A directed labeled graph $\mathcal{G}$ 
 is called {\em follower-separated}  if all vertices have distinct vertex follower sets.
\end{defn}

In characterizing  the number of states in a minimal right-resolving presentation, we 
formulate two definitions that view path sets as ``infinite follower sets''.

\begin{defn}\label{def:word_vertex_path_set}
(1) Given a path set $\sP$ and a finite word $w \in \sA^{\star}$, the {\em word path set } $\sP^w$  of $\sP$ is the set of all infinite
words $x$ such that $wx \in \sP$. 

(2) Given a  presentation $(\sG, v)$ of a path set $\sP$, an associated {\em vertex path set} is any path set $X(\sG, v')$
where $v'$ is a vertex of $\sG$.
\end{defn}

A nonempty word path set $\sP^w$ is a path set. For a right-resolving presentation $(\sG, v)$ of $\sP$ an 
argument parallel to  Proposition \ref{onevertexforeachfollowerset} (1) 
shows that $\sP^{w}$  equals the vertex path set $X(\sG, v')$ for the vertex $v'$ of $\sG$ that is 
the final vertex on  the unique path with symbol labels $w$ from the initial vertex $v$.

\begin{thm}\label{thm:minimal_presentation}
{\rm (Minimal right-resolving presentation)} 

(1) A path set $\sP$ has a minimal right-resolving presentation $(\sG, v)$, which  is unique up to
isomorphism of pointed labeled graphs. This presentation  is pruned, reachable and follower-separated.

(2) Conversely, if a right-resolving presentation of $\sP$ is pruned, reachable and follower-separated,
then it is minimal. 

(3) The number $m$ of vertices in a minimal right-resolving presentation of $\sP$ is the number of distinct 
word follower sets 
$\mitF_{\sP}(b)$
 of $\sP$. It also equals the number of distinct vertex follower sets $\mitF(\sG, v)$ of $\sP$ in any right-resolving, reachable presentation
 $\sP =X(\sG, v)$.

(4) The number $m$ of vertices in a minimal right-resolving presentation of $\sP$ is the number of distinct nonempty
word path sets $\sP^{w}$ of $\sP$.  It also equals the number of distinct nonempty vertex path sets $X(\sG, v')$  in any right-resolving presentation
$(\sG, v)$ of $\sP$.
\end{thm}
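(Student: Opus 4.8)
The plan is to reduce all four statements to a single quantity, namely the number $N$ of distinct word follower sets $\mitF_{\sP}(b)$ of $\sP$, and to realize this count by the canonical presentation built in the proof of Theorem \ref{thm:finite_symbolic}, whose vertices are precisely the distinct word follower sets. Throughout I would use two identifications. First, in any right-resolving, pruned presentation $(\sG, v)$, Proposition \ref{onevertexforeachfollowerset} shows that the word follower sets of $\sP$ and the vertex follower sets $\mitF(\sG, v')$ over reachable vertices $v'$ coincide as collections of sets. Second, in a pruned presentation the vertex follower set $\mitF(\sG, v')$ is exactly the set of initial blocks $\sB^{I}(X(\sG, v'))$ of the vertex path set, and likewise $\mitF_{\sP}(b) = \sB^{I}(\sP^{b})$; since path sets are determined by their initial words (Corollary \ref{thm:initialblocks}), equality of follower sets is equivalent to equality of the corresponding path sets. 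These let me pass freely between follower sets and path sets.

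I would prove (3) first. Fix any right-resolving, reachable presentation; after discarding stranded vertices (which preserves $\sP$, the right-resolving property, and reachability) I may assume it is also pruned. By the first identification every vertex follower set is a word follower set and every word follower set is realized, so the number of distinct vertex follower sets equals $N$; since $N$ is computed intrinsically from $\sP$, this number is independent of the presentation. For (4) I apply the second identification: distinct nonempty word path sets $\sP^{b}$ correspond bijectively to distinct word follower sets $\mitF_{\sP}(b)$, and distinct nonempty vertex path sets $X(\sG, v')$ correspond bijectively to distinct vertex follower sets, so both counts again equal $N$ (restricting to reachable vertices, equivalently discarding unreachable vertices, which does not change $\sP$).

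Next I would settle existence and minimality, that is (1) and (2). The canonical presentation of Theorem \ref{thm:finite_symbolic} is right-resolving, pruned and reachable with exactly $N$ vertices, and applying Proposition \ref{onevertexforeachfollowerset} to it shows the vertex labeled $\mitF_{\sP}(b_i)$ has vertex follower set $\mitF_{\sP}(b_i)$; as the $b_i$ index distinct follower sets, it is follower-separated. For the lower bound, given an arbitrary right-resolving presentation I first delete unreachable vertices and then prune, obtaining a right-resolving, reachable, pruned presentation with no more vertices; by Proposition \ref{onevertexforeachfollowerset}(2) all $N$ word follower sets occur as vertex follower sets, forcing at least $N$ vertices. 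Hence every minimal right-resolving presentation has exactly $N$ vertices, and a minimal one cannot contain unreachable or stranded vertices, so it is reachable and pruned; having $N$ vertices and exactly $N$ distinct vertex follower sets by (3), the assignment $v' \mapsto \mitF(\sG, v')$ is injective, i.e.\ follower-separated. This gives the stated properties in (1) and also the converse (2): a right-resolving, pruned, reachable, follower-separated presentation has its $N$ distinct vertex follower sets in bijection with its vertices, hence exactly $N$, the minimum.

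The main work is the uniqueness clause of (1). I would show that a minimal right-resolving presentation is reconstructed, up to labeled-graph isomorphism, from the combinatorics of the word follower sets alone. The vertices biject with the distinct follower sets via $v' \mapsto \mitF(\sG, v')$ (using follower-separated, reachable, and that all $N$ are realized), with $v \mapsto \mitF_{\sP}(\emptyset) = \sB^{I}(\sP)$. From a vertex with follower set $F$, the right-resolving property gives at most one edge with a given label $a$, and such an edge exists exactly when some word of $F$ begins with $a$; its terminal vertex has follower set equal to the \emph{derivative} $\{w : aw \in F\}$, which is again one of the $N$ follower sets and hence a uniquely determined vertex. Thus the initial vertex, the vertex set, and all labeled edges are functions of $\sP$ alone, so any two minimal right-resolving presentations are isomorphic as pointed labeled graphs via the follower-set-preserving bijection. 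The hard part will be verifying that this bijection is genuinely edge-preserving in both directions (that the derivative formula pins down the target vertex) and keeping the reachable/pruned reductions consistent so that the counts in (3) and (4) transfer cleanly between the two presentations; the remainder is bookkeeping built on Proposition \ref{onevertexforeachfollowerset} and Corollary \ref{thm:initialblocks}.
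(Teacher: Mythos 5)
Your proposal is correct and follows essentially the same route as the paper: you count vertices by the distinct word follower sets via Proposition \ref{onevertexforeachfollowerset}, realize the minimum with the canonical construction from Theorem \ref{thm:finite_symbolic}, prove uniqueness by showing every labeled edge is forced (your ``derivative'' $\{w : aw \in F\}$ is exactly the paper's $\mitF_{\sP}(ba')$), and deduce part (4) from Corollary \ref{thm:initialblocks}. The only differences are cosmetic: you prove (3)--(4) before (1)--(2) and are somewhat more explicit about the prune/reachability reductions that the paper's proof leaves implicit.
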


\begin{proof}
(1) Proposition \ref{onevertexforeachfollowerset} implies that the number of vertices of any right-resolving
presentation of $\sP$ must equal or exceed the number of distinct  word follower sets $\mitF_{\sP}( b)$.
The proof of Theorem \ref{thm:finite_symbolic} constructed a right-resolving presentation $(\sG, v)$ for $\sP$,
which has one vertex for each distinct word follower set,
which must therefore be minimal. By construction it is pruned and the  vertex follower sets in this presentation
are distinct, so it is follower-separated.

It remains to show uniqueness. We know that any minimal right-resolving presentation necessarily
has  vertices  labeled by  all of the possible word follower sets. 
The exit edges from the pointed vertex $v$ have different labels $a'$ (from right-resolving property) and
the edge labeled $a'$ must go to the vertex labeled by  the  word follower set $\mitF_{\sP} (a')$.  
This assignment  is the only way  to permit  the initial  word follower set $\mitF_{\sP}( \emptyset)$
to reach all words for it that begin with prefix $a'$.
Similarly the  exit edges from each vertex $\mitF_{\sP}( b)$ must take the allowed
prefix labels $a'$ of words in $\mitF_{\sP}( b)$ and for each $a'$ must map to vertex 
with follower set label $\mitF_{\sP} (ba')$.  Every labeled edge is forced, so the construction is unique. 

(2) Suppose that a  right-resolving presentation $(\sG, v)$ of $\sP$ is pruned,  follower-separated and reachable.
By Proposition \ref{onevertexforeachfollowerset} ( 2) each  vertex follower sets is a symbolic
follower set. 
By Proposition \ref{onevertexforeachfollowerset} (1)
the vertex follower sets  includes all distinct  word follower sets. 
The follower-separation property implies
each distinct follower set occurs exactly once  in $\sG$ as a vertex follower set, so 
$(\sG, v)$ is minimal.

(3) The two assertions are a consequence of Proposition  \ref{onevertexforeachfollowerset}
saying that every vertex follower set of every right-resolving presentation is some word
follower set, and that all distinct word follower sets appear 
as vertex follower sets in every right-resolving presentation.

(4) The two assertions follow from (3) using Theorem \ref{thm:initialblocks},
which implies  that word path sets are uniquely determined by their  word follower sets, and vice-versa.
Similarly  vertex path sets are uniquely determined by their vertex follower sets, and vice-versa.
\end{proof}

\begin{rem}\label{rem:minimal-not}
For  any  minimal presentation of $\sP$ that has  fewer vertices than  in a minimal right-resolving presentation, 
there will necessarily be  word follower sets $\sP^w$ that are not vertex follower sets of such a presentation. 
\end{rem} 

\begin{rem}  
For sofic shifts  minimal right-resolving presentations (in the two-sided sofic shift sense, which is not the path set sense)  
are not necessarily unique. A counterexample is given in \cite[Example 3.3.21]{LM95}.
\end{rem}

%
%
\subsection{Recognizing and distinguishing  path sets} \label{sec:recognition}

We describe algorithms  for testing identity of path sets and for finding minimal right-resolving presentations. 
They are based on the following effective bound for telling when two given vertex follower sets
in a (possibly disconnected) presentation are equal. 

\begin{prop}\label{thm:exercise}
Let $\mathcal{G}$ be a right-resolving pruned labeled graph,  not necessarily connected, that has $m$ vertices.
 If two vertices, $v_1$ and $v_2$, have distinct vertex follower sets $\mitF(\sG, v_i)$, then there is some word $w=a_1a_2\cdots a_r$ of length $m$ in $\msrA$ which belongs to 
 exactly one of the two follower sets $\sF(\mathcal{G}, v_1)$ and $\sF(\mathcal{G}, v_2)$. 
 \end{prop}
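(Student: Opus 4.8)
The plan is to prove the contrapositive in a quantitative form: I want to show that if two vertices $v_1$ and $v_2$ agree on all words of length at most $m$, then they have identical vertex follower sets. Equivalently, I will show that whenever the follower sets differ, they must already differ on a word of length at most $m$ (and then I will upgrade ``at most $m$'' to ``exactly $m$'' using the pruned hypothesis, padding a short distinguishing word out to length $m$). The natural tool is an iterated refinement of the Myhill--Nerode type: define a sequence of equivalence relations $\sim_k$ on the vertex set $V(\sG)$ by declaring $v \sim_k v'$ iff $\mitF(\sG,v)$ and $\mitF(\sG,v')$ contain exactly the same words of length at most $k$. This is the standard state-minimization (partition-refinement) construction from automata theory, specialized to our setting where every state is terminal.

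First I would record the base case and the refinement step. The relation $\sim_0$ has a single class (the empty word lies in every nonempty follower set, and by pruning every vertex follower set is nonempty). The key recursion is that $v \sim_{k+1} v'$ if and only if $v \sim_k v'$ and, for every symbol $a \in \sA$, the $a$-successors of $v$ and $v'$ are either both undefined or both defined and $\sim_k$-equivalent; here I use the right-resolving hypothesis, which guarantees that each vertex has at most one outgoing edge with a given label, so ``the $a$-successor'' is well-defined. This equivalence is exactly the statement that agreement on all words of length $\le k+1$ is detected by the first symbol together with agreement on all words of length $\le k$ at the successors.

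The heart of the argument is the stabilization bound. The partitions induced by $\sim_0, \sim_1, \sim_2, \ldots$ form a refining chain, so the number of classes is nondecreasing. If $\sim_{k+1}$ equals $\sim_k$ for some $k$, then the recursion forces $\sim_j = \sim_k$ for all $j \ge k$, and this common relation is exactly equality of full vertex follower sets. Since $\sG$ has $m$ vertices, the partition can be refined at most $m-1$ times before it stabilizes, so $\sim_{m-1}$ already equals the full follower-set equivalence. Therefore, if $v_1$ and $v_2$ have distinct follower sets, they are distinguished by $\sim_{m-1}$, i.e. by a word of length at most $m-1$. The main obstacle is just bookkeeping the exact constant: one must check the chain stabilizes within the vertex count, and that the ``padding'' step at the end really produces a length-$m$ word. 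For the padding, let $w'$ of length $r \le m-1$ lie in exactly one follower set, say $\mitF(\sG,v_1)$; since $(\sG,v)$ is pruned the path in $\sG$ from $v_1$ labeled $w'$ ends at a vertex with an outgoing edge, so I can extend $w'$ by further symbols to a word $w$ of length exactly $m$ that remains in $\mitF(\sG,v_1)$, while any extension of $w'$ stays outside $\mitF(\sG,v_2)$ because $w'$ itself already fails to label a path from $v_2$. Hence $w$ of length $m$ lies in exactly one of the two follower sets, as required.
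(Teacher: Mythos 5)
Your proof is correct. One point of comparison is worth making explicit: the paper does not actually prove this proposition in-house --- its ``proof'' consists of a pointer to Exercise 3.4.10 of Lind--Marcus and to Conway's book --- so your write-up supplies a complete argument where the paper defers to a reference. The partition-refinement scheme you use (the relations $\sim_k$ given by agreement on words of length at most $k$, the recursion via $a$-successors that is well-defined precisely because the graph is right-resolving, and stabilization of the refining chain within $m-1$ steps since the number of classes is bounded by the number of vertices) is the standard state-minimization argument, and it is essentially what the cited exercise outlines; so in substance you are reconstructing the intended proof rather than inventing a different one. Two details in your version deserve credit: first, you correctly isolate where right-resolving is indispensable (without it, $\sim_{k+1}$ is not a function of $\sim_k$ and the stabilization step fails, which is why nondeterministic automata can require much longer distinguishing words); second, you handle the discrepancy between the natural output of the refinement argument (a distinguishing word of length at most $m-1$) and the statement's demand for a word of length exactly $m$, by padding along outgoing edges --- legitimate because the graph is pruned and because vertex follower sets are prefix-closed, so no extension of a word outside $\mitF(\sG,v_2)$ can re-enter it. Your intermediate bound of $m-1$ also matches the paper's footnote asserting that $m-1$ is the optimal length bound for the distinguishing word before padding.
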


\begin{proof}
The proof of this theorem\footnote{One   can improve the length bound on $w$ to $m-1$, and there exist examples showing that the upper bound $m-1$ is the best possible.} 
is outlined in Exercise 3.4.10 of \cite{LM95}.
Similar results appear in Conway \cite[Chapter 1, Theorems 6 and 7]{Co71}. 
\end{proof}

\begin{prop}\label{cor:n+m}

Let $\mathcal{P}_1= X(\sG_1, v_1)$ and $\mathcal{P}_2=X(\sG_2, v_2)$ be path sets with right-resolving presentations, 
where $\mathcal{G}_1$ has $m_1$ vertices and $\mathcal{G}_2$ has $m_2$ vertices. Then
 $\mathcal{P}_1=\mathcal{P}_2$ if and only if $\mathcal{P}_1$ and $\mathcal{P}_2$ share the same set of initial $(m_1+m_2)$-blocks; i.e., $\sB^{I}_{m_1+m_2}(\sG_1, v_1) = \sB^{I}_{m_1+m_2}(\sG_2, v_2)$. 
\end{prop}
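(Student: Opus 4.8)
The plan is to prove the two directions separately, with the forward direction (equality of path sets implies equality of initial blocks of every length) being immediate and the reverse direction (agreement on blocks up to length $m_1+m_2$ forces full equality) carrying the content. For the forward direction, if $\sP_1 = \sP_2$ then by Corollary \ref{thm:initialblocks} the sets of \emph{all} finite initial words coincide, so in particular the initial $(m_1+m_2)$-blocks agree; nothing more is needed.

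For the reverse direction, I would build a single disconnected graph $\sG = \sG_1 \sqcup \sG_2$ (disjoint union), which is right-resolving and, after pruning, has $m_1 + m_2$ vertices, and then apply Proposition \ref{thm:exercise} to the two distinguished vertices $v_1, v_2$ viewed as vertices of $\sG$. The key observation is that the vertex follower set $\mitF(\sG, v_i)$ computed in the disjoint union is exactly the set of finite initial words of $\sP_i$, i.e. $\sB^{I}(\sG_i, v_i)$, since no path can cross between the two components. So the hypothesis that $\sP_1$ and $\sP_2$ agree on initial $(m_1+m_2)$-blocks says precisely that the follower sets $\mitF(\sG, v_1)$ and $\mitF(\sG, v_2)$ contain exactly the same words of length up to $m_1+m_2$. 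Proposition \ref{thm:exercise} applied to the $m = m_1+m_2$ vertex graph $\sG$ states that if these two follower sets were \emph{distinct}, they would be distinguished by some word of length $m = m_1+m_2$; taking the contrapositive, agreement on all words of length $\le m_1+m_2$ forces $\mitF(\sG, v_1) = \mitF(\sG, v_2)$ as full follower sets. Hence $\sB^{I}(\sG_1, v_1) = \sB^{I}(\sG_2, v_2)$, and then Corollary \ref{thm:initialblocks} gives $\sP_1 = \sP_2$.

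The main obstacle, and the point requiring the most care, is the applicability of Proposition \ref{thm:exercise} to a \emph{disconnected} graph and the correct bookkeeping of the length bound. Proposition \ref{thm:exercise} is stated for a right-resolving pruned graph that need not be connected and asserts a distinguishing word of length (at most) $m$ where $m$ is the total number of vertices; so forming $\sG$ with $m_1 + m_2$ vertices is exactly what makes the bound $m_1 + m_2$ come out. One should note that pruning $\sG_1$ and $\sG_2$ to apply Proposition \ref{thm:exercise} does not change their path sets (pruning removes only vertices lying on no infinite path, by Proposition \ref{thm:3.2frompaper1}), and can only decrease the vertex counts, which keeps the bound valid. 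I would also remark that the sharper length bound $m-1$ mentioned in the footnote to Proposition \ref{thm:exercise} is not needed here; the stated bound $m_1 + m_2$ suffices and is the cleanest to state.
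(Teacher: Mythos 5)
Your proposal is correct and follows essentially the same route as the paper: form the disjoint union $\sG = \sG_1 \sqcup \sG_2$ with $m_1+m_2$ vertices, apply the contrapositive of Proposition \ref{thm:exercise}, identify the vertex follower sets in the disconnected graph with the initial-block sets $\sB^{I}(\sG_i, v_i)$, and conclude via Corollary \ref{thm:initialblocks}. Your extra remark about pruning not changing the path sets or invalidating the vertex bound is a small point of care that the paper's proof passes over silently, but it does not change the argument.
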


\begin{proof}
This result follows  directly from Proposition ~\ref{thm:exercise}. We form a (disconnected) graph $\sG = \sG_1 \sqcup \sG_2$, which has $m_1+m_2$ vertices.
The contrapositive of Proposition  ~\ref{thm:exercise} says that the follower sets $\sF(\sG, v_1)$ and $\sF(\sG. v_2)$ are equal if and only if they contain the same set of words of length $m_1+m_2$; i.e.,
if $\sF_{m_1+m_2}(\sG, v_1) = \sF_{m_1+m_2}(\sG, v_2).$ Because the graph $\sG$ is disconnected in two pieces, these follower sets are $\sF(\sG, v_1)=\sB^{I}(\sG_1, v_1)$ and $\sF(\sG. v_2)=\sB^{I}(\sG_2, v_2)$. 
For the same reason, we have that the length $m_1$ follower sets are $\sF_{m_1+m_2}(\sG, v_1) = \sB^{I}_{m_1+m_2}(\sG_1, v_1)$
and $\sF_{m_1+m_2}(\sG, v_2)= \sB^{I}_{m_1+m_2}(\sG_2, v_2)$, whence $\sB^{I}_{m_1+m_2}(\sG_1, v_1)= \sB^{I}_{m_1+m_2}(\sG_2, v_2)$.
Therefore we conclude that the latter equality implies equality of the initial follower sets $\sB^{I}(\sG_1, v_1) = \sB^{I}(\sG_2, v_2)$. 
By Theorem \ref{thm:initialblocks} we conclude $\sP_1=\sP_2$. 
\end{proof}

\begin{prop}\label{thm:identity}
{\rm (Testing Identity of Path Sets)} 
There is an effective algorithm which when given two pointed graphs 
$(\sG_1, v_1)$ and $(\sG_2, v_2)$ determines whether the path sets $\sP_1=X(\sG_1, v_1)$ and $\sP_2= X(\sG_2, v_2)$
are identical. 
 \end{prop}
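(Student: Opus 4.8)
The plan is to reduce the identity-testing problem to a finite, effectively checkable combinatorial condition, and to lean on the machinery already built up in this section. The central tool is Proposition \ref{cor:n+m}, which gives an effective finite criterion: two path sets presented by right-resolving graphs $(\sG_1,v_1)$ and $(\sG_2,v_2)$ with $m_1$ and $m_2$ vertices respectively are equal if and only if they share the same set of initial blocks of length $m_1+m_2$. Since there are only finitely many words of length $m_1+m_2$ over the finite alphabet $\sA$ (namely $|\sA|^{m_1+m_2}$ of them), and since one can enumerate the length-$(m_1+m_2)$ initial blocks of each presentation by a finite breadth-first traversal of the respective graph starting from $v_i$, this comparison is a terminating computation.

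The first step I would take is to reduce to the right-resolving case. The two input presentations $(\sG_1,v_1)$ and $(\sG_2,v_2)$ are arbitrary and need not be right-resolving, so Proposition \ref{cor:n+m} does not apply directly. However, Proposition \ref{thm:3.2frompaper1} provides an \emph{effective} procedure (the algorithm of \cite[Theorem 3.2]{AL14a}, together with the pruning and reachability reductions described in its proof) that converts any given presentation of a path set into a right-resolving, pruned, reachable presentation of the same path set. So the algorithm first runs this conversion on both inputs, producing right-resolving presentations $(\sG_1', v_1')$ and $(\sG_2', v_2')$ with some numbers of vertices $m_1'$ and $m_2'$, still presenting $\sP_1$ and $\sP_2$.

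The second step is to apply the finite criterion. Set $N := m_1' + m_2'$. The algorithm computes $\sB^{I}_{N}(\sG_1', v_1')$ and $\sB^{I}_{N}(\sG_2', v_2')$, the sets of initial blocks of length $N$, by finite graph traversals, and compares them for set equality. By Proposition \ref{cor:n+m}, $\sP_1 = \sP_2$ if and only if these two finite sets of words coincide. The algorithm outputs ``identical'' in that case and ``distinct'' otherwise. Correctness is immediate from Proposition \ref{cor:n+m} once the reduction to right-resolving presentations is in place, and termination is clear since every stage---the right-resolving conversion, the pruning, the length-$N$ block enumeration, and the final set comparison---is a finite computation over finite data.

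The main obstacle, such as it is, is purely expository: verifying that the conversion supplied by Proposition \ref{thm:3.2frompaper1} is genuinely \emph{effective} (terminating with explicit output), rather than merely an existence statement, and confirming that the vertex counts $m_1', m_2'$ of the converted graphs are available so that the threshold $N = m_1'+m_2'$ can be computed. Both are handled by the explicit algorithmic content cited in the proof of Proposition \ref{thm:3.2frompaper1}. One subtlety worth a sentence is that the bound $N = m_1'+m_2'$ must use the vertex counts \emph{after} conversion, not the counts of the original inputs, since Proposition \ref{cor:n+m} is stated for right-resolving presentations; using the post-conversion counts is exactly what the criterion requires.
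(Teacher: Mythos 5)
Your proposal is correct and follows essentially the same route as the paper's proof: convert both inputs to right-resolving (pruned, reachable) presentations via the effective procedure behind Proposition \ref{thm:3.2frompaper1}, then invoke Proposition \ref{cor:n+m} with threshold equal to the sum of the post-conversion vertex counts and compare the two finite sets of initial blocks by exhaustive path tracing. Your added remark that the threshold must use the vertex counts \emph{after} conversion is a point the paper handles implicitly but does not spell out.
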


\begin{proof}
Proposition  \ref{cor:n+m} yields an effective algorithm to tell if two path sets $\sP_1= X(\sG_1, v_1)$ and $\sP_2=X(\sG_2, v_2)$ are equivalent. 
We first use the method of \cite[Theorem 3.2]{AL14a}
to convert the  given presentations to pointed graphs $(\sG_1^{'}, v_1^{'})$ and $( \sG_2^{'}, v_2^{'}) $ that are right-resolving and reachable.
Suppose these two graphs $\sG_1^{'}$ and $\sG_2^{'}$   have  $m_1$ and $m_2$ vertices respectively.
It now  suffices to exhaustively determine all members of the  finite sets $\sB^{I}_{m_1+m_2}(\sG_1^{'}, v_1^{'})$ and $\sB_{m_1+m_2}^{I}(\sG_2^{'}, v_2^{'})$
of initial blocks of length $m_1+m_2$ by tracing paths through the graphs, and to check whether these sets are identical. 
\end{proof}

%
%
\section{Decimations  of  path sets} \label{sec:decimation}

We study the effect of  decimation operations on  path sets.  The following result was originally 
established as Theorem 1.5 in \cite{AL14a}.

\begin{thm}\label{thm:decimation} 
{\rm ( $\sC(\sA)$ is closed under decimation) }
If  $\sP \in \sC(\sA)$ is a path set, then  for any $(j, n)$ with $j \ge 0$ and $n \ge 1$, 
the $j$-th decimation set $\sP_{j,n}$ of $\sP$ at depth $n$, given by 
$$
\sP_{j,n} = \psi_{j, n}(\sP) := \bigcup_{ \bx \in \sP} \{  \psi_{j,n}(\bx) \}, 
$$
is a path set. 
\end{thm}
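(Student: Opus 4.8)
The plan is to construct, from an arbitrary presentation of $\sP$, an explicit presentation of $\psi_{j,n}(\sP)$ by an offset $n$-th higher power construction, and then verify that the path set of the new graph is exactly $\psi_{j,n}(\sP)$. First I would fix any presentation $\sP = X(\sG, v)$ with $\sG = (V,E)$ and labeling $\sE$; existence is part of the definition of a path set, and one may take it pruned and reachable by Proposition \ref{thm:3.2frompaper1}, though neither property is needed here.

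Next I would build a pointed graph $(\sH, v^{\ast})$ as follows. Its vertex set is $V \cup \{v^{\ast}\}$, where $v^{\ast}$ is a new initial vertex. For $u,w \in V$ and $a \in \sA$, I put an edge $u \to w$ labeled $a$ in $\sH$ exactly when there is a directed path of length $n$ in $\sG$, say $u = u_0 \to u_1 \to \cdots \to u_n = w$, whose first edge $u_0 \to u_1$ carries the label $a$; and for $w \in V$, $a \in \sA$, I put an edge $v^{\ast} \to w$ labeled $a$ exactly when there is a directed path of length $j+n$ in $\sG$ from $v$, say $v = u_0 \to \cdots \to u_{j+n} = w$, whose $(j{+}1)$-st edge $u_j \to u_{j+1}$ carries the label $a$. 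Distinct witnessing paths yielding the same triple $(u,w,a)$ contribute a single edge, so $\sH$ is a legitimate labeled graph; note that it need not be right-resolving, which is the subject of later results. Since $V$ and $\sA$ are finite, $\sH$ is finite.

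The heart of the argument is then the set identity $X(\sH, v^{\ast}) = \psi_{j,n}(\sP)$, which I would establish by two inclusions. For $\supseteq$, any $\by = \psi_{j,n}(\bx)$ with $\bx \in \sP$ lifts to a walk $v = v_0 \to v_1 \to \cdots$ in $\sG$; the induced walk $v^{\ast} \to v_{j+n} \to v_{j+2n} \to \cdots$ is legal in $\sH$ and reads off exactly $x_j x_{j+n} x_{j+2n} \cdots = \by$. For $\subseteq$, a walk in $\sH$ from $v^{\ast}$ consists of one $v^{\ast}$-edge followed by edges inside $V$; by construction each such edge is witnessed by a length-$n$ path segment of $\sG$ (the first edge by a length-$(j+n)$ segment), and consecutive segments glue because the terminal vertex of one segment is the initial vertex of the next. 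Concatenating yields an infinite walk in $\sG$ from $v$, hence a point $\bx \in \sP$, whose decimation $\psi_{j,n}(\bx)$ is precisely the sequence of labels read along the walk in $\sH$. This gives $\psi_{j,n}(\sP) = X(\sH, v^{\ast}) \in \sC(\sA)$.

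The only real work lies in the bookkeeping of this gluing step: one must check that the offset $j$ is handled so that the emitted labels occupy positions $j, j+n, j+2n, \ldots$ of the reconstructed sequence, and that every element of $\psi_{j,n}(\sP)$ is captured while no spurious sequences are introduced. I expect this label-position accounting around the offset to be the main, though entirely routine, obstacle. An alternative, more modular route would first note the relation $\psi_{j,n}(\sP) = \psi_{0,n}(\rS^{j}\sP)$, observe that $\rS\sP = \bigcup_{(v,w)\in E} X(\sG, w)$ is a finite union of path sets and hence a path set by Theorem \ref{thm:operations}(3) (so each $\rS^{j}\sP$ is a path set), and then apply the offset-free case $j=0$ of the construction above. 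I would mention this reduction but carry out the single unified construction to keep the argument self-contained.
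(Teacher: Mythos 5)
Your construction is correct, and it proves the theorem by the same basic mechanism as the paper: a higher-power-type presentation whose edges encode length-$n$ paths of $\sG$, with the identity $X(\sH, v^{\ast}) = \psi_{j,n}(\sP)$ established by exactly the two-inclusion gluing argument you outline (the gluing is sound because consecutive witnessing segments share endpoints, and the offset bookkeeping works out as you verified). The difference from the paper is organizational rather than conceptual. The paper's Theorem \ref{thm:decimation_alg2} handles principal decimations $0 \le j \le n-1$ with \emph{no} extra vertex: it keeps the marked vertex $v$ and relabels each length-$n$ window by its $j$-th symbol, so the walk visits vertices at positions $\equiv 0 \pmod{n}$; it then treats $j \ge n$ by writing $\psi_{j,n}(\sP) = \psi_{0,n}(\rS^j \sP)$ and invoking the separate one-extra-vertex shift construction of Theorem \ref{thm:decimation_alg}. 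You instead absorb the entire offset into a single new initial vertex $v^{\ast}$ whose out-edges encode length-$(j+n)$ paths read at index $j$, after which every edge reads the \emph{first} symbol of a length-$n$ window, so the walk visits positions $\equiv j \pmod{n}$; in effect your $\sH$ is the paper's graph $\sG_{0,n}$ with one added initial vertex. This buys a uniform treatment of all $j \ge 0$, principal and non-principal alike, in one construction with at most one added vertex, at the cost of not recovering the paper's sharper conclusion that principal decimations need no extra vertex at all. Your alternative reduction via $\rS\sP = \bigcup_{w :\, (v,w) \in E} X(\sG, w)$, closure under finite unions (Theorem \ref{thm:operations}), and the $j=0$ case is also sound, and is close in spirit to the paper's own splitting into a shift step plus a principal-decimation step.
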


The  proof  given in \cite{AL14a} formulated an algorithm
which when given as input a right-resolving presentation $(\sG, v)$ of a path set  $\sP$, 
and the values $(j,n)$ produced  as output a  (not necessarily right-resolving)  presentation $(\sG_{j,n} , v_{j,n})$ for
 the $j$-th decimation at level $n$ of $\sP$, $\psi_{j,n}(\sP)$,   for $0 \le j \le n-1$.
The algorithm was outlined in the discussion in  \cite[Section 7]{AL14a}. It has the feature  that 
number of vertices of the output presentation it produces can be much larger than the number of vertices
in the input presentation.   

Here we present algorithms which produce  presentations
of $\psi_{j,n}(\sP)$ which are smaller: they  increase the number of vertices of the input presentations by at most $1$.

The paper \cite{AL14a} showed that any iterated shift $S^j(\sP)$ of a path set is a path set.
In Section \ref{subsec:31} we give an algorithm which shows that from any presentation of a path
set with $m$ vertices one can constructively find a presentation of any $S^j(\sP)$ having at
most $m+1$ vertices. Note that $S^j(\sP) = \psi_{j,1}(\sP).$
In Section \ref{subsec:32N}  we  present  a second constructive  algorithm that
 finds  a presentation of $\psi_{j,n}(\sP)$ for $0 \le j \le n-1$, the higher power presentation,
 having no more than $m$ vertices. Combining it with
 the algorithm of Section \ref{subsec:31} we obtain presentation for each $\psi_{j, n}(\sP)$ with $j \ge n$.
 In Section \ref{subsec:33N} we use this result to
prove finiteness of the set of all decimations $\psi_{j,n}(\sP)$ of a path set $\sP$.

For general sets $X \subseteq \mathcal{A}^{\mathbb{N}}$, we will term the decimations $\psi_{j,n}(X)$ with $0 \le j \le n-1$ {\em principal decimations} and call the remaining  $\psi_{j,n}(X)$
with $j \ge n$ {\em subsidiary decimations}. This terminology reflects the fact that, when  acting on a single word $a_0a_1\cdots$,
the principal decimations at level $n$  supply
enough information to reconstruct $X$  word by word, using the identity \eqref{eqn:basic-relation}. 

%
\subsection{Iterated shift operators on path sets}\label{subsec:31}

We have $\psi_{0,1}(X) = X$ and $\psi_{j, 1}(X) = S^j (X)$, where $S^j$ is the $j$-fold iteration of the left shift operator,
which operates on individual symbol sequences $a_0a_1a_2 \cdots \in \sA^{\NN}$ by
\begin{equation*}
\rS^j( a_0a_1a_2a_3 \cdots ) = a_j a_{j+1}a_{j+2} a_{j+3} \cdots.
\end{equation*}
 
\begin{thm}\label{thm:decimation_alg}
{\rm (Iterated shift operator)}
Given a path set with presentation $\sP = X(\sG, v)$ having $m$ vertices, which is reachable.   Then for  each $j \ge 1$  there exists a presentation
of the $j$-th iterated shift 
$$\rS^j \sP := \psi_{j,1}(\sP) = X(\sG_{j,1}, v)$$
 which has   at most $m+1$ vertices. 
\end{thm}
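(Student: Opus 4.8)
The plan is to realize $\rS^j\sP$ as a union of \emph{vertex path sets} of $\sG$ and then to collapse that union into a single pointed presentation by adjoining one fresh initial vertex, which accounts for the ``$+1$'' in the bound. Concretely, shifting away the first $j$ symbols of a word in $\sP$ corresponds to deleting the first $j$ edges of the path in $(\sG,v)$ that presents it. Hence, letting $W_j$ denote the set of vertices $w$ of $\sG$ for which there is a directed path of length exactly $j$ from $v$ to $w$, I claim
\[
\rS^j\sP \;=\; \bigcup_{w\in W_j} X(\sG,w).
\]
Indeed, a sequence $a_ja_{j+1}\cdots$ lies in $\rS^j\sP$ exactly when some extension $a_0\cdots a_{j-1}a_ja_{j+1}\cdots$ is presented by a path issuing from $v$, and the suffix of that path starting after step $j$ issues from a vertex of $W_j$; conversely any infinite path from a vertex of $W_j$ extends backward along a length-$j$ path from $v$.

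Next I would build $\sG_{j,1}$ from $\sG$ by keeping all of its vertices and edges and adjoining a single new vertex $v_{j,1}$, which serves as the distinguished vertex of the new presentation. For every $w\in W_j$ and every edge $w\xrightarrow{a}w'$ of $\sG$, I add an edge $v_{j,1}\xrightarrow{a}w'$ carrying the same label $a$. The new vertex has only outgoing edges, so the vertex path sets of all original vertices are left unchanged, and $\sG_{j,1}$ has exactly $m+1$ vertices (which may then be pruned further, keeping the count $\le m+1$).

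Correctness reduces to verifying $X(\sG_{j,1},v_{j,1})=\rS^j\sP$, for which I would use a first-edge decomposition. Any infinite path from $v_{j,1}$ begins with an edge $v_{j,1}\xrightarrow{a}w'$ copying some $w\xrightarrow{a}w'$ with $w\in W_j$, and then continues as an arbitrary infinite path from $w'$ in $\sG$; this is precisely the label sequence of an infinite path from $w$, hence an element of $X(\sG,w)$. Conversely, every member of $\bigcup_{w\in W_j}X(\sG,w)$ arises this way, so the displayed identity gives the claim, and since $\rS^j\sP=\psi_{j,1}(\sP)$ this is the desired presentation. I do not expect a serious obstacle; the points requiring care are purely bookkeeping: confirming that adjoining $v_{j,1}$ disturbs no pre-existing vertex path set, that exactly one vertex is added, and handling the degenerate case $\sP=\emptyset$ (equivalently $W_j=\emptyset$), which is presented by a single edgeless vertex. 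Reachability of $(\sG,v)$ ensures that $W_j$ is obtained by a finite, effectively computable $j$-step reachability computation from $v$.
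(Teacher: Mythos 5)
Your proposal is correct and follows essentially the same route as the paper: identify $\rS^j\sP$ with the union of vertex path sets $X(\sG,w)$ over vertices $w$ reachable from $v$ in exactly $j$ steps, then adjoin a single fresh initial vertex whose outgoing edges copy the out-edges of those vertices. Your write-up is in fact somewhat more explicit than the paper's terse argument (including the first-edge decomposition, deduplication of parallel labeled edges, and the empty case), but there is no substantive difference in method.
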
 

\begin{proof} 
The path set $\rS^j(\sP)$ is exactly the set of infinite words in $\sG$ emanating from the set $V^{(j)}(\sG,v)$ of vertices of $\sG$  that can be reached
from the initial vertex $v$  after traversing  a  path with $j$ edges. We create a new graph $\sG_{j,1}$  from $\sG$ by adding a new vertex $w$,
so that $V(\sG_{j,1} )= V(\sG) \cup \{ w\}$. 
The directed labeled graph $\sG_{j,1}$  has the same directed labeled edges as $\sG$ on the vertices $V(\sG)$, 
the new vertex $w$ has no entering edges and  is defined to have labeled exit edges from $w$ to vertex $v_2 \in V(\sG)$ whenever there is 
an entering edge $v_1 \to v_2$ from some $v_1 \in V(\sG)$ in having the given label. 
Any duplicate labeled edges obtained this way are to be discarded. The new vertex will be the marked vertex in
the presentation $ X(\sG', w)$.
We claim that  the presentation $ X(\sG', w)$ is $\psi_{j,1}(\sP)$. Indeed, after one step each path from $w$ enters $\sG$ and stays 
there forever after.
\end{proof} 
\begin{rem}
The presentation $\rS^j(\sP) =X(\sG_{j,1}, w)$ obtained in this construction need not be right-resolving; there may be multiple edges with
the same label emanating from $w$. 
\end{rem}

\begin{defn}  \label{weakshift} (Shift-invariance, weak shift-invariance, weak shift-stability)  

(1) A set $X \subset \sA^{\NN}$ is {\em shift-invariant } if $SX = X$.

(2)  A set $X$ is  {\em weakly shift-invariant}  if  there are integers  $k > j \ge 0$ such
that the iterated shifts  $S^kX = S^jX$. 

(3) A set $X \subseteq \sA^{\NN}$ is {\em weakly shift-stable}  if there are integers  $k > j \ge 0$ such that $S^k X \subseteq S^j X$.
\end{defn} 

The concept of weak shift-stability was introduced and studied in \cite{ALS21}. 
Weak shift-invariance implies weak shift-stability of a set $X$.

\begin{thm}\label{thm:weak-shift-invariant}
{\rm (Weak shift-invariance of path sets)}
For any path set $\sP$ there exist integers $k > j \ge 0$
giving the  equality  of iterated shifts
$S^k \sP =  S^j \sP.$
That is, all path sets $\sP$ are  weakly shift-invariant. 
\end{thm}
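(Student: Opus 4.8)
The plan is to exploit the fact that iterated shifts of a path set are themselves path sets, and that there are only finitely many distinct ones, forcing a repetition. First I would recall from Theorem~\ref{thm:decimation} (equivalently from the discussion preceding this statement) that every iterated shift $\rS^j\sP = \psi_{j,1}(\sP)$ is again a path set. The key structural input is Theorem~\ref{thm:decimation_alg}: starting from a fixed presentation $(\sG,v)$ of $\sP$ with $m$ vertices, each $\rS^j\sP$ has a presentation $X(\sG_{j,1},w)$ obtained by marking a new ``source'' vertex $w$ whose exit edges record, after one step, the vertices of $\sG$ reachable from $v$ by a path of length $j$. The crucial observation is that $\rS^j\sP$ is determined entirely by \emph{which subset} $V^{(j)}\subseteq V(\sG)$ of vertices is reachable from $v$ in exactly $j$ steps, since $\rS^j\sP$ is precisely the set of infinite label-sequences of walks issuing from $V^{(j)}$.

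Next I would count. There are only $2^{m}$ possible subsets of the vertex set $V(\sG)$, hence only finitely many possible values for $V^{(j)}$ as $j$ ranges over $\{0,1,2,\ldots\}$. By the pigeonhole principle there exist integers $k>j\ge 0$ with $V^{(k)}=V^{(j)}$. Since the path set $\rS^i\sP$ depends on $i$ only through the reachable-vertex set $V^{(i)}$ — the infinite walks available from a given set of start vertices are the same regardless of the index $i$ that produced that set — this equality of subsets yields the equality of path sets
\begin{equation*}
\rS^k\sP = \rS^j\sP,
\end{equation*}
which is exactly weak shift-invariance.

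The one point requiring care, and the step I expect to be the main (though modest) obstacle, is justifying that $\rS^i\sP$ is genuinely a function of the set $V^{(i)}$ alone, independent of $i$. The cleanest way is to define $V^{(i)}$ as the set of terminal vertices of length-$i$ walks from $v$ and to verify directly that $\rS^i\sP = \bigcup_{v'\in V^{(i)}} X(\sG,v')$: a sequence $a_i a_{i+1}\cdots$ lies in $\rS^i\sP$ iff some $a_0\cdots a_{i-1}a_i a_{i+1}\cdots \in \sP$, i.e. iff some length-$i$ walk from $v$ ends at a vertex $v'$ from which the tail is realizable, which is precisely membership in $X(\sG,v')$ for some $v'\in V^{(i)}$. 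Once this description is in hand the right-hand side depends on $i$ only through $V^{(i)}$, and the pigeonhole argument closes the proof. I would not even need the sharper $m+1$ bound of Theorem~\ref{thm:decimation_alg}; the reachable-subset count $2^{m}$ suffices, and it also makes transparent why the period $k-j$ and offset $j$ can be taken bounded in terms of $m$.
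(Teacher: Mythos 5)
Your proof is correct and follows essentially the same strategy as the paper's: both rest on the observation (underlying Theorem~\ref{thm:decimation_alg}) that $\rS^i\sP$ is exactly the set of label sequences of infinite walks in $\sG$ issuing from the set $V^{(i)}$ of vertices reachable from $v$ in $i$ steps, and both conclude by the pigeonhole principle that two iterated shifts must coincide. The only difference is the pigeonhole target: the paper counts the finitely many presentations on at most $m+1$ vertices over the alphabet $\sA$, whereas you pigeonhole directly on the $2^m$ possible subsets $V^{(i)}\subseteq V(\sG)$, a mild streamlining that also gives a sharper bound on where the repetition $\rS^k\sP=\rS^j\sP$ first occurs.
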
 

\begin{proof}
Given  $\sP$, take a reachable presentation for it, letting $m$ be its number of vertices.  
According to Theorem \ref{thm:decimation_alg}, all iterated shifts $S^j(\sP)$ for $j \ge 1$ have
 reachable presentations (on the same alphabet) having  at
most $m+1$ vertices. The number of such presentations is finite. 
(There are at most $(m+1)(2^{|\sA|(m+1)(m+2)}$ of them,
noting that presentations do not allow multiple directed edges with the same label between two vertices.)
By the pigeonhole principle, there must exist two integers $0\le j<k$ giving the same presentation, 
so  $S^k \sP =  S^j \sP$.
\end{proof} 

%
\subsection{Higher power presentation for decimations of path sets}\label{subsec:32N}

We  present an algorithm which constructs from a given  presentation of $\sP= X(\sG, v)$ 
a  presentation $\sG_{j,n}$ of $\psi_{j,n}(\sP)$ for principal decimations, 
called here the {\em modified $n$-th higher power presentation},   
which has the vertex bound  $|V(\sG_{j,n})| \le   |V(\sG)|$.
It is based on the well known  {\em $n$-th higher power construction},  cf. \cite[Sect. 1.4]{LM95}. 
which presents $\sP$ in blocks using labels from a larger symbol alphabet $\sA^n$. 
The modified algorithm replaces   the $n$-block word labels produced by this construction to labels using the original
alphabet $\sA$ in such a way as to obtain a presentations $\sG_{j,n}$ of all  principal decimations $\psi_{j,n}(\sP)$, for $0 \le j \le n-1$.
We then apply  the shift construction in Theorem \ref{thm:decimation_alg} to get a presentation  of each  $\psi_{j,n}(\sP)$
for $j \ge n$ having at most one extra vertex: $|V(\sG_{j,n})| \le   |V(\sG)| +1$.

\begin{thm}\label{thm:decimation_alg2}
{\rm (Higher powers of a path set)}
Given a path set with presentation $\sP = X(\sG, v)$ on alphabet $\sA$.  For  $n \ge 2$  there exists a presentation
$\psi_{j, n}(\sP) = X(\sG_{j.n}, v)$ of the $(j, n)$-th decimation of $\sP$, for $0 \le j \le n-1$, such that each $\sG_{j,n}$ has the same vertex set
as $\sG$ and has the same marked vertex $v$. 
 For $j \ge n$ there exists  a presentation $\psi_{j, n}(\sP) = X(\sG_{j.n}, w)$, where $\sG_{j,n}$ has  the same vertex set
 of $\sG$, plus one extra vertex $w$, which will be  the marked vertex. 
\end{thm}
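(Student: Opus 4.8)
The plan is to build $\sG_{j,n}$ from the well-known $n$-th higher power presentation of $\sP$ (cf. \cite[Sect. 1.4]{LM95}) and then relabel its edges so that each records a single symbol of $\sA$ rather than an $n$-block in $\sA^n$. Recall that the $n$-th higher power graph has the same vertex set as $\sG$, and for every directed path of length $n$ in $\sG$ from a vertex $u$ to a vertex $u'$ carrying the block label $a_0a_1\cdots a_{n-1}$, it has a single edge $u \to u'$ labeled by the word $a_0a_1\cdots a_{n-1}\in\sA^n$. Its infinite walks from $v$ are exactly the infinite walks of $\sG$ from $v$, read $n$ symbols at a time, so it presents $\sP$ over the enlarged alphabet $\sA^n$.

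First I would treat the principal case $0\le j\le n-1$. I define $\sG_{j,n}$ to have the same vertex set as $\sG$, the same marked vertex $v$, and, for each length-$n$ path of $\sG$ from $u$ to $u'$ with block label $a_0\cdots a_{n-1}$, a single edge $u\to u'$ labeled by the $j$-th symbol $a_j\in\sA$; any duplicate labeled edges produced this way are discarded, which is harmless since path sets record symbol sequences with multiplicity one. I then claim $X(\sG_{j,n}, v)=\psi_{j,n}(\sP)$.

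For the inclusion $X(\sG_{j,n}, v)\subseteq \psi_{j,n}(\sP)$, an infinite walk $v=u_0\xrightarrow{b_0}u_1\xrightarrow{b_1}u_2\cdots$ in $\sG_{j,n}$ lifts, edge by edge, to a concatenation of length-$n$ paths of $\sG$ whose $i$-th block $c^{(i)}_0\cdots c^{(i)}_{n-1}$ satisfies $c^{(i)}_j=b_i$; this concatenation is an infinite walk of $\sG$ from $v$, hence presents some $\bx\in\sP$, and by construction $\psi_{j,n}(\bx)=b_0b_1b_2\cdots$. For the reverse inclusion, given $\by=\psi_{j,n}(\bx)$ with $\bx\in\sP$, I would take an infinite walk of $\sG$ from $v$ realizing $\bx$, group its edges into consecutive blocks of $n$, and read off the resulting edges of $\sG_{j,n}$: the $i$-th such edge carries label $x_{in+j}$, so the walk in $\sG_{j,n}$ presents $x_jx_{n+j}x_{2n+j}\cdots=\by$. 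The main thing to get right here is this bookkeeping: that blocking an infinite walk into length-$n$ segments is always unambiguous (no leftover partial block, since the walk is infinite) and that vertex-connectivity is preserved when passing between $\sG$ and $\sG_{j,n}$, so that walks correspond in both directions.

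Finally, for $j\ge n$ I would reduce to the principal case. Writing $j=qn+r$ with $0\le r\le n-1$ and $q\ge 1$, the pointwise identity $\psi_{j+n,n}(\bx)=\rS\circ\psi_{j,n}(\bx)$ iterates to give $\psi_{j,n}(\sP)=\rS^q\big(\psi_{r,n}(\sP)\big)$. I already have the presentation $X(\sG_{r,n}, v)$ of $\psi_{r,n}(\sP)$ on the vertex set of $\sG$, restricting to its reachable part if necessary, which only lowers the vertex count. Applying Theorem \ref{thm:decimation_alg} to this presentation with shift exponent $q$ then yields a presentation $X(\sG_{j,n}, w)$ of $\rS^q\psi_{r,n}(\sP)=\psi_{j,n}(\sP)$ that adds exactly one new marked vertex $w$, giving at most $|V(\sG)|+1$ vertices as required. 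I expect the only real obstacle to be the walk-correspondence verification in the principal case; the higher-power relabeling and the $j\ge n$ reduction are then routine.
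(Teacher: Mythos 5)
Your proof is correct and follows essentially the same route as the paper: the principal case $0 \le j \le n-1$ is handled by exactly the paper's modified higher-power construction (relabel each length-$n$ block edge of the $n$-th higher power graph by its $j$-th symbol, discard duplicate edges, restrict to the part reachable from $v$), with the same two-inclusion walk-correspondence argument. The only difference is cosmetic and harmless: for $j \ge n$ the paper shifts first and then decimates, applying the higher-power relabeling to the $(m+1)$-vertex presentation of $\rS^j \sP$ from Theorem \ref{thm:decimation_alg} via $\psi_{j,n}(\sP) = \psi_{0,n}(\rS^j \sP)$, whereas you decimate first and then shift, via $\psi_{j,n}(\sP) = \rS^q\bigl(\psi_{r,n}(\sP)\bigr)$ with $j = qn + r$; both compositions use the same two ingredients and yield the same one-extra-vertex bound.
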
 

\begin{proof}
We associate  to the presentation $(\sG, v)$ a construction  $(\sG_n,v)$ called the {\em $n$-th higher power presentation},
in which $\sG_n$ has
the same vertex set as $\sG$ and the same initial vertex $v$, but its edges are labeled by the product alphabet $\sA^n$.
(This construction parallels that in  \cite{LM95}, Defn. 2.3.10.)
 In $\sG_m$ we draw a directed edge between vertices $v_1$ and $v_2$
with edge label $b_0b_1\cdots b_{n-1} \in \sA^n$ if there is a directed path of length $n$ in $\sG$ starting at $v_1$ and ending at $v_2$,
having successive edge labels $b_0, b_1 , \cdots, b_{n-1}$. 
 It is straightforward to see that 
$\sP= X(\sG_n, v)$, viewed in the enlarged alphabet $\sA^n$, generates the output infinite words in blocks of $n$ symbols.

We now obtain a presentation $(\sG_{j, n}, v)$  from $(\sG_{n}, v)$ by relabeling  edges, replacing each edge  symbol $b_0b_1\cdots b_{n-1} \in \sA^n$  by a single symbol $b_j\in \sA$, 
its  $j$-th symbol.  After this is done, there may exist  pairs of vertices $v_1$ and $v_2$ being connected by multiple edges labeled with
the same symbol $b_j$;  we  delete duplicate edges. In addition, the  resulting graph might be disconnected; we retain the induced subgraph having the set of 
 vertices reachable starting from $v$  using this set of edges.

We claim that   $\psi_{j,n}(\sP) = X(\sG_{j,n}, v)$. To prove the claim we show inclusions hold in both directions. 
 Suppose  $x=x_0x_1\ldots\in\psi_{(j,n)}(\sP)$. Then there is some word $y=y_0y_1\ldots\in\sP$ such that $x_i=y_{j+in}$ for all $i$. Since $y$ is presented by $(\mathcal{G},v)$, there is an infinite path in $\mathcal{G}$ starting at $v$, presenting $y$. Therefore, there is an edge in $\mathcal{G}_n$ from $v$ to a vertex $v'$ of $\sG$ labeled with the first $n$ letters of $y$, 
 another edge from $v'$ to another vertex $v''$ labeled with the next $n$ letters, and so on. Take a corresponding path in $\mathcal{G}_{j,n}$.
 The word presented will begin with the $j$th letter of the first block of $n$ letters from $y$, followed by the $j$th letter of the second block of $n$ letters, and so on. 
 Thus, the word presented will be $y_jy_{j+n}y_{j+2n}\ldots$. This word  is $x$ so  $x \in X(\sG_{j,n},v)$, whence 
$\psi_{j,n}(\sP) \subseteq X(\sG_{j,n}, v)$.
For the other inclusion,  suppose  $x\in X(\mathcal{G}_{j,n},v)$. Then there is a path on $\sG_{j,n}$ starting at $v$ presenting $x$. 
A corresponding path on $\sG_n$ will present a word whose $(j,n)$th decimation is $x$. Thus, $x\in\psi_{j,n}(\sP)$, and we have $\psi_{j,n}(\sP) \supseteq X(\sG_{j,n}, v)$.

We have completed the construction for principal decimations. For the remaining decimations $\psi_{j,n}(\sP)$ with $j \ge n$, 

we apply the  higher power construction  to the presentation obtained in  Theorem \ref{thm:decimation_alg}, in which 
$\psi_{j, n}(\sP)$ becomes the initial 
principal decimation $\psi_{0,n}(\rS^j \sP)$ of  $\rS^j \sP$.   
\end{proof}

%
%
\subsection{Finiteness of full decimation set  }\label{subsec:33N}

 Definition \ref{def:full-decimation-set} states that the {\em full decimation set} $\ffD(\sP)$ of a path set is defined by
$$
\ffD(\sP) := \{ \psi_{j,n}(\sP): \, \mbox{all} \quad n \ge 1 \quad \mbox{and all } \quad j \ge 0\}. 
$$
For the special case of path sets we show finiteness of the full decimation set.

\begin{thm}\label{thm:decimation_set_bound2}
{\rm (Full decimation set bound)} 
For each path set $\sP$ its full decimation set   $\ffD(\sP)$ is a finite set. 
If $\sP$ has a presentation having $m$ vertices, then $\ffD(\sP)$ has cardinality 
bounded by
$$
|\ffD(\sP) | \le 2^{ m^2|\sA|}.
$$
\end{thm}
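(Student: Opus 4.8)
The plan is to convert the finiteness claim into a counting problem about labeled graphs on a fixed vertex set, with the higher power presentation of Theorem~\ref{thm:decimation_alg2} as the engine. That theorem shows that every \emph{principal} decimation $\psi_{j,n}(\sP)$ with $0 \le j \le n-1$ admits a presentation $(\sG_{j,n}, v)$ whose underlying graph is obtained from a fixed presentation $\sG$ of $\sP$ purely by relabeling edges (replacing each $n$-block label by its $j$-th symbol) and discarding duplicate edges. The decisive features are that the vertex set is contained in $V(\sG)$ and that the marked vertex is always the same vertex $v$. Hence, after padding with unreachable isolated vertices, each principal decimation is presented by some labeled directed graph on the single fixed pointed vertex set $(V(\sG), v)$ of $m$ vertices.

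Next I would count these presentations. Because a presentation forbids repeated labeled edges, a labeled graph on a fixed set of $m$ vertices is determined by which of the $m^2|\sA|$ possible labeled triples $(v_1, v_2, a)$ it contains, so there are exactly $2^{m^2|\sA|}$ such graphs; each one, marked at $v$, yields a single path set $X(\cdot, v)$. Therefore at most $2^{m^2|\sA|}$ distinct principal decimations can occur, which already controls the principal part of $\ffD(\sP)$.

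The remaining and main issue is the \emph{subsidiary} decimations $\psi_{j,n}(\sP)$ with $j \ge n$. For these I would invoke the identity $\psi_{j,n}(\sP) = \psi_{0,n}(\rS^j \sP)$ from the introduction together with weak shift-invariance (Theorem~\ref{thm:weak-shift-invariant}), which forces the orbit $\{\rS^j \sP : j \ge 0\}$ to be finite. Thus every subsidiary decimation is a principal decimation of one of finitely many path sets, and by Theorem~\ref{thm:decimation_alg} each iterated shift $\rS^j\sP$ has a presentation on at most $m+1$ vertices. To package everything uniformly one adjoins a single extra vertex $w$ to $V(\sG)$ and always marks $w$: a principal decimation on $(V(\sG),v)$ is re-presented by letting $w$ copy the out-edges of $v$, while each subsidiary decimation is presented directly by the shift construction of Theorem~\ref{thm:decimation_alg} with initial vertex $w$. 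Counting labeled graphs on this fixed $(m+1)$-vertex pointed set then bounds $|\ffD(\sP)|$.

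The step I expect to be the crux is exactly this last packaging. An iterated shift $\rS^j\sP$ is in general a \emph{union} of vertex path sets rather than a single one, so it genuinely requires a fresh initial vertex; the naive construction therefore presents principal decimations on $m$ vertices but costs one extra vertex for the subsidiary ones. Making the vertex count and the marked vertex uniform across all pairs $(j,n)$, so that the final count is a single power of two, is the delicate point, and it is precisely what decides whether the exponent can be kept at $m^2|\sA|$ or must be enlarged to $(m+1)^2|\sA|$; I would check carefully whether the copying trick for principal decimations can be arranged to avoid enlarging the vertex set, and otherwise record the bound with the extra vertex.
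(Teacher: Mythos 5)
Your proposal follows essentially the same route as the paper's own proof: bound $|\ffD(\sP)|$ by counting edge-labeled directed graphs (no repeated labeled edges) on a fixed pointed vertex set, using the modified higher power construction (Theorem \ref{thm:decimation_alg2}) for principal decimations and the shift construction (Theorem \ref{thm:decimation_alg}) for the subsidiary ones; the paper's proof is exactly this count, compressed into three sentences. The difference lies precisely at the point you flagged as the crux, and your caution there is warranted. The paper counts graphs on $m$ vertices and concludes $|\ffD(\sP)|\le 2^{m^2|\sA|}$, but the constructions it invokes present the subsidiary decimations $\psi_{j,n}(\sP)$ with $j\ge n$ on $m+1$ vertices, with the adjoined vertex $w$ as the marked vertex, so the count as performed only justifies the exponent $(m+1)^2|\sA|$. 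The paper is in fact internally inconsistent on this point: the introduction's statement of this very theorem (Theorem \ref{thm:decimation_set_bound}) gives the bound as $2^{(m+1)^2|\sA|}$, which is exactly what your packaging (adjoin $w$, always mark $w$, and let $w$ duplicate the out-edges of $v$ so that principal decimations are also re-presented with marked vertex $w$) delivers. So your argument is complete and correct for finiteness and for the exponent $(m+1)^2|\sA|$; neither your argument nor the paper's establishes the sharper exponent $m^2|\sA|$ in the statement under review, because the principal decimations (presented on $(V(\sG),v)$) and the subsidiary ones (requiring the extra marked vertex) cannot be accommodated in a single count of graphs on $m$ vertices without an additional idea.

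One minor simplification: your appeal to weak shift-invariance (Theorem \ref{thm:weak-shift-invariant}) is a detour you do not need. The second assertion of Theorem \ref{thm:decimation_alg2} already gives, for every pair $(j,n)$ with $j\ge n$, a presentation of $\psi_{j,n}(\sP)$ on the fixed pointed set $(V(\sG)\cup\{w\},w)$, so finiteness of the subsidiary part of $\ffD(\sP)$ follows from the count alone, with no need to first reduce the shift orbit $\{\rS^j\sP : j\ge 0\}$ to finitely many sets.
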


%
%

\begin{proof}
 Theorem \ref{thm:decimation_alg} implies 
a upper bound for $|\ffD(\sP)|$  given by the number of distinct labeled directed graphs on $m$ vertices having
the property that
there is at most one directed edge going from one given vertex $v_1$ to another given vertex $v_2$,
with a given symbol $a\in \sA$. The number of such directed vertex pairs is $m^2$ and the number of
possible directed edge patterns from  a fixed vertex $v_1$ to another fixed vertex $v_2$ is 
exactly $2^{|\sA|}$, so we obtain $|\ffD(\sP) | \le 2^{m^2|\sA|}.$
\end{proof} 

\begin{rem}\label{rem:37}
In contrast to Theorem \ref{thm:decimation_set_bound}
 there exist closed $X \subset \sA^{\NN}$ for  which all members of the infinite collection  $\{\psi_{j, n}(X)\}$  for $j \ge 0$  are distinct,
 see \cite[Example 6.5]{ALS21}.
\end{rem}

%
%
\subsection{Right resolving  presentations for decimations of path sets}\label{subsec:34N}


 The output presentation  of $\psi_{j,n}(\sP)$ produced by Theorem \ref{thm:decimation_alg}
 need not be right-resolving, even if the 
 given input  presentation of $\sP$ were right-resolving. Using the subset construction for obtaining
 a right-resolving presentation from a general presentation, we obtain
 the following result.
 
\begin{thm}\label{thm:decimation_presentation}
{\rm (Right-resolving presentations of decimation sets of a path set)}
Given a path set $\sP$ on alphabet $\sA$ with at least two letters, having a 
(not necessarily right-resolving) presentation $\sP = X(\sG, v)$  with $m$ vertices.
Then for each $n \ge 1$ and each  $j \ge 0$ the  decimation set $\psi_{j,n} (\sP)$ has a right-resolving presentation having at most $2^{m+1}-1$ vertices.
\end{thm}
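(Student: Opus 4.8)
The plan is to first produce a (not necessarily right-resolving) presentation of $\psi_{j,n}(\sP)$ having at most $m+1$ vertices, and then apply the subset construction to make it right-resolving while keeping the vertex count under control.

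First I would assemble a small presentation of $\psi_{j,n}(\sP)$ by combining the two decimation constructions already established. We may assume the given presentation $(\sG,v)$ of $\sP$ is reachable, since discarding unreachable vertices only lowers the count. For $n \ge 2$ and a principal index $0 \le j \le n-1$, Theorem \ref{thm:decimation_alg2} yields a presentation $(\sG_{j,n},v)$ of $\psi_{j,n}(\sP)$ on the same vertex set as $\sG$, hence with at most $m$ vertices. For the subsidiary cases $j \ge n$ (which include every $n=1$, $j\ge 1$, where $\psi_{j,1}(\sP) = \rS^j\sP$), Theorem \ref{thm:decimation_alg} together with Theorem \ref{thm:decimation_alg2} produces a presentation with at most $m+1$ vertices, the extra vertex being the single ``shift'' vertex $w$. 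The one remaining case $n=1$, $j=0$ is $\psi_{0,1}(\sP)=\sP$ itself, with $m$ vertices. Thus in every case $\psi_{j,n}(\sP)$ has a presentation $(\sH,u)$ with at most $m+1$ vertices.

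Next I would make $(\sH,u)$ right-resolving via the subset construction used in the proof of Theorem \ref{thm:minimal-to-RRminimal}. After pruning $(\sH,u)$ (which does not increase its vertex count), the subset construction produces a right-resolving, reachable presentation of the same path set. The states occurring in this construction are nonempty subsets of the vertex set of $\sH$, of which there are at most $2^{m+1}-1$. Since the two presentations present the same set of initial words, Corollary \ref{thm:initialblocks} guarantees they determine the same path set $\psi_{j,n}(\sP)$, so this right-resolving presentation has at most $2^{m+1}-1$ vertices, as claimed.

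This proof is essentially a bookkeeping combination of the two prior constructions, so there is no deep obstacle. The point requiring the most care is verifying the uniform bound of $m+1$ vertices across all cases of $(j,n)$ --- in particular checking that the subsidiary decimations add at most one extra vertex and that the subset construction, applied to a pruned (hence reachable and sink-free) presentation, never needs the empty set as a state, which is exactly why the bound is $2^{m+1}-1$ rather than $2^{m+1}$.
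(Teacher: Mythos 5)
Your proposal is correct and matches the paper's intended argument: the paper proves this theorem exactly by taking the at-most-$(m+1)$-vertex presentations of $\psi_{j,n}(\sP)$ from Theorems \ref{thm:decimation_alg} and \ref{thm:decimation_alg2} and applying the subset construction, whose states are the nonempty subsets of the vertex set, giving the bound $2^{m+1}-1$. Your write-up actually supplies more detail than the paper (which states the result with only a remark pointing to the subset construction), in particular the case analysis over principal versus subsidiary $(j,n)$ and the verification via Corollary \ref{thm:initialblocks} that the determinized presentation yields the same path set.
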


 This bound on the number of vertices implies a finiteness result for the 
 number of distinct decimation sets; see Theorem \ref{thm:decimation_set_bound2}.

%
%
\section{Interleaving  of  path sets} \label{sec:intalgsec}

Our object is to constructively prove the following result. 
\begin{thm}\label{thm:intalgthm2} 
  {\rm ($\sC(\sA)$ is closed under interleaving)}
If $\mathcal{P}_0, \ldots, \mathcal{P}_{n-1}$ are path sets on the alphabet $\mathcal{A}$, then their $n$-fold interleaving 
$$X := (\prn)_{i=0}^{n-1} \sP_i= \mathcal{P}_0 \pr \mathcal{P}_1 \pr \cdots \pr \mathcal{P}_{n-1}$$
 is a path set; i.e., $X \in \sC(\sA)$.  
\end{thm}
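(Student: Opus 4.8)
The plan is to prove this constructively by building an explicit presentation of the interleaving from presentations of the factors, thereby establishing the stronger Theorem~\ref{thm:algthm2} (the interleaving graph product construction), from which closure follows immediately. First I would take right-resolving, pruned, reachable presentations $(\sG_i, v_i)$ of each $\sP_i$, which exist by Proposition~\ref{thm:3.2frompaper1}. The core idea is to track, simultaneously, the current vertex in each of the $n$ factor graphs together with a ``phase'' counter $k \in \{0, 1, \ldots, n-1\}$ recording our position within each block of $n$ symbols. Thus the vertex set of the product graph $\sH$ is a subset of $\{0, 1, \ldots, n-1\} \times V(\sG_0) \times \cdots \times V(\sG_{n-1})$, with initial vertex $\bv = (0, v_0, \ldots, v_{n-1})$, explaining the factor of $n$ in the vertex bound of part~(i).

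Next I would define the edges. From a state $(k, u_0, \ldots, u_{n-1})$, reading symbol $a \in \sA$ should advance only the $k$-th coordinate (since by the definition of interleaving, the symbol at position $ni+k$ comes from $\bx_k = \psi_{k,n}(\by)$) while incrementing the phase modulo $n$. Concretely, I would place a labeled edge $(k, u_0, \ldots, u_{n-1}) \xrightarrow{a} (k+1 \bmod n, u_0, \ldots, u_{k-1}, u_k', u_{k+1}, \ldots, u_{n-1})$ whenever $\sG_k$ has an $a$-labeled edge $u_k \to u_k'$. The key verification is that an infinite walk in $(\sH, \bv)$ reads out precisely an interleaved word: the symbols in arithmetic progression $k \pmod n$ trace a walk in $\sG_k$ from $v_k$, so they spell a word of $\sP_k$, and conversely any tuple of walks in the factor graphs assembles into a walk in $\sH$. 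This gives $X(\sH, \bv) = \sP_0 \pr \cdots \pr \sP_{n-1}$ by the characterization in Definition~\ref{interleavingdef}(2), and hence $X \in \sC(\sA)$.

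For the additional properties I would argue as follows. For part~(ii), right-resolving is inherited coordinate-wise: two edges leaving $(k, u_0, \ldots, u_{n-1})$ with the same label $a$ must both advance the $k$-th coordinate via an $a$-edge of $\sG_k$, and since $(\sG_k, v_k)$ is right-resolving there is only one such edge, so the two product edges coincide. For part~(iii), pruned is also inherited: every factor vertex has an exiting edge, so from any reachable product state the $k$-th coordinate admits a transition, giving an exiting edge in $\sH$. Restricting $\sH$ to the vertices reachable from $\bv$ (which is what makes the bound $n \prod k_i$ rather than requiring the full product) preserves both properties.

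The main obstacle I anticipate is not the logical structure but the careful bookkeeping of the phase index and the verification that reachability-pruning does not destroy the claimed properties. In particular, one must check that after restricting to the reachable subgraph the presentation still presents the full interleaving (no path of the interleaving is lost) and that pruning, if applied, does not remove states needed to realize some valid interleaved word --- here the fact that each $(\sG_i, v_i)$ is already pruned and reachable is what guarantees that every reachable product state lies on some bi-infinite-to-the-right walk. A secondary subtlety is handling the edge case $n = 1$ (where the interleaving is the identity) and ensuring the modular phase arithmetic is stated cleanly; I would dispatch $n = 1$ at the outset and carry out the construction for $n \ge 2$ as in the statement of Theorem~\ref{thm:algthm}.
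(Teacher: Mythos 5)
Your proposal is correct and takes essentially the same approach as the paper: its $n$-fold interleaving graph product stores the phase implicitly by using the $n$ cyclically rotated vertex products $\sV^{i} = \sV_i \times \cdots \times \sV_{n-1} \times \sV_0 \times \cdots \times \sV_{i-1}$ with the active coordinate always leftmost, which is exactly your phase-counter construction under the isomorphism $(k, u_0, \ldots, u_{n-1}) \mapsto (u_k, \ldots, u_{n-1}, u_0, \ldots, u_{k-1})$. The verification of both inclusions and the coordinate-wise inheritance of the right-resolving and pruned properties proceed just as you describe.
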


To do this, we give an effective procedure  for computing a presentation $(\sG, v)$ of the $n$-interleaving set
$X:= \mathcal{P}_0 \pr \mathcal{P}_1 \pr \cdots \pr \mathcal{P}_{n-1}$, given presentations of each input factor 
$\mathcal{P}_i= (\sG_i, v_i) $. This presentation certifies that $X$ is a path set.
We  give  examples. We also prove the converse result that every interleaving factor of a path set $\sP$ is a path set
given by some decimation of $\sP$.
 
%
%
\subsection{ $n$-fold interleaving construction}\label{subsec:41}


\begin{thm}\label{thm:algthm2} 
{\rm(Interleaving pointed graph product construction)}
 Let $n \ge 2$  and suppose that $\mathcal{P}_0, \ldots, \mathcal{P}_{n-1}$ 
are path sets with given presentations $(\mathcal{G}_0,v_0), \ldots, (\mathcal{G}_{n-1},v_{n-1})$, respectively.
There exists a construction  taking as inputs these presentations and giving as output
a presentation $(\sH, \bv)$ of the $n$-fold interleaving
$X  := \mathcal{P}_0 \pr \mathcal{P}_1 \pr \cdots \pr \mathcal{P}_{n-1}.$ In particular $X= X(\sH, \bv)$
is a path set.  This construction has the following properties:
\begin{enumerate}
\item[(i)]
 If $\mathcal{G}_i$ has $k_i$ vertices for each $0 \leq i \leq n-1$, then $\mathcal{H}$ will have at most $n\prod_{i=0}^{n-1}k_i$ vertices. 
 \item [(ii)]
 If the pointed graphs $(\mathcal{G}_i, v_i)$ are right-resolving for all $0 \leq i \leq n-1$, then 
 the output pointed graph $\mathcal{H}$ will also be right-resolving.
 \item [(iii)]
 If the pointed graphs 
 $(\mathcal{G}_i, v_i)$ are pruned  for all $0 \leq i \leq n-1$, then 
 the output pointed graph $\mathcal{H}$ will also be pruned.
 \end{enumerate}
\end{thm}


\begin{proof}[Proof of Theorem ~\ref{thm:algthm}]
Suppose $\mathcal{P}_i$ is presented by the pointed  graph $(\mathcal{G}_i, v_i^0)$, which has vertex set $\mathcal{V}_i$ having  $k_i$ vertices, for all $0 \leq i \leq n-1$. 
We construct a new pointed  labeled graph $(\mathcal{H}, \bv)$, which we term the {\em  $n$-fold interleaving pointed graph product}  of  $\tilde{\sG}_i := (\mathcal{G}_i, v_i^0)$.

The underlying directed labeled  graph $\sH$ is the {\em  $n$-fold interleaving  graph product} of the labeled graphs $\sG_i$: 
$$
\sH := \pr_{i=0}^{n-1} \sG_i := \sG_0 \pr \sG_1 \pr \cdots \pr \sG_{n-1},
$$
using as  input  an  ordered set of $n$ directed labeled  graphs $\mathcal{G}_i$.

The vertices of $\sH$ consist of   a union of  products of the vertices of the $\sG_i$,  for $n$ cyclically
rotated copies of the $\sG_i$. 
To begin, choose (for convenience) a numbering  to the vertices $\sV_i$ from each $\mathcal{G}_i$, and let $v_i^j$ be the $j$th element of $\mathcal{V}_i$, with $v_i^0$ 
the marked vertex of $\mathcal{V}_i$. 
We define the $i$-th vertex set in $\sH$ to be 
$$
\sV^{i} := \sV_{i} \times \sV_{i+1} \times \cdots \times \sV_{n-1} \times \sV_0 \times \sV_1 \times \cdots \sV_{i-1}.
$$
and let
$\sV(\sH) = \bigcup_{i=0}^{n-1} \sV^{i}$ be the vertex set of $\sH$.
 Here a vertex in $\sV^{i}$ is a vector, 
$$
(v_i^{j_{i}}, v_{i+1}^{j_{i+1}} ,\ldots , v_{n-1}^{j_{n-1}}, v_0^{j_0},  v_1^{j_1}, \ldots v_{i-1}^{j_{i-1}})\,:  \, \, \mbox{where each  } \, 0 \le j_m \le k_m -1.
$$

 The labeled edges of $\mathcal{H}$    all connect vertices of $\sV^{i}$ to vertices of the next  set $\sV^{i+1}$, with 
 indices taken modulo $n$. 
Whenever there is an edge from $v_i^{\ell_1}$ to $v_i^{\ell_2}$ in $\mathcal{G}_i$ which has  label $a$,
draw edges in $\mathcal{H}$ labeled $a$ from each vertex   in $\sV^i$ that is of the form\footnote{There are $\frac{1}{k_i} \prod_{j=0}^{n-1} k_j$ such edges.} 
\begin{equation*}
(v_i^{\ell_1},  v_{i+1}^{j_{i+1}}, \ldots ,    v_{n-2}^{j_{n-2}}, v_{n-1}^{j_{n-1}},v_0^{j_0},  v_1^{j_1}, \ldots  ,   v_{i-1}^{j_{i-1}}). 
\end{equation*}
to that vertex in $\sV^{i+1}$ given by
\begin{equation*}
( v_{i+1}^{j_{i+1}}, \ldots ,    v_{n-2}^{j_{n-2}}, v_{n-1}^{j_{n-1}},v_0^{j_0},  v_1^{j_1}, \ldots ,    v_{i-1}^{j_{i-1}}, v_i^{\ell_2}). 
\end{equation*}
We use the cyclic ordering for superscripts $i ~(\bmod ~n)$ of $\sV^{i}$,  so that $\sV^{n} \equiv \sV^0$.

For  the pointed graph version of this construction, we add as  the pointed vertex  of $\sH$  vertex of $\sV^{0}$ given by $\bv^0= (v_0^0, v_{1}^0  \cdots  v_{n-2}^{0} , v_{n-1}^{0})$
determined by the pointed vertices $v_i^0$ of the individual $\sG_i$. 

Now define   $\widetilde{\mathcal{P}}$ to be the path set presented by
 $(\mathcal{H},\bv^0)$.\medskip
 
{\bf Claim.} {\em \,\,
$\widetilde{\mathcal{P}}=(\prn)_{i=0}^{n-1}\mathcal{P}_i :=\mathcal{P}_0 \pr \mathcal{P}_1 \pr \cdots \pr \mathcal{P}_{n-1}.$}\\

To prove the claim, 
we first  show  the inclusion  \, $(\prn)_{i=0}^{n-1}\mathcal{P}_i \subseteq \widetilde{\mathcal{P}}$.
Let  $(x_t)_{t=0}^{\infty}\in (\prn)_{i=0}^{n-1}\mathcal{P}_i$. By definition  there  exist  edge label elements separately in each factor $\mathcal{P}_i$
that traverse an infinite edge path, 
$(e_{i,t})_{t=0}^{\infty} $
in $\sG_i$ which  has  associated symbol sequence 
\begin{equation*}
(x_{i,t})_{t=0}^{\infty}\in \mathcal{P}_i\text{ for all }0\leq i\leq n-1,
\end{equation*}
which visits an  associated sequence of vertices 
$$
(v_{i, t})_{t=0}^{\infty}
$$
in the graph $\sG_i$. We call $(v_{i,t})_{i=0}^{\infty}$ a {\em vertex path} associated to the edge path $(x_{i, t})_{i=0}^{\infty}$.
(A vertex path is uniquely determined by the edge path, requiring that the initial vertex be the marked vertex. 
There could be several edge paths giving the same marked vertex, if there are multiple edges.)

 The edge symbol sequences $(x_{i, t})_{i=0}^{\infty} $  interleave  to reconstruct
the sequence $(x_t)_{t=0}^{\infty}$ via
$$
x_{i,t}=x_{i+ nt} \quad \mbox{ for all} \,\, 0\leq t<\infty.
$$
We show these elements give a sequence of update edges for an edge path in $\sH$ realizing this symbol sequence $x_t$. We start at 
$t=0$ at initial vertex $\bv^0 =(v_{0}^0,v_{1}^0, \ldots ,v_{n-1}^0) \in \sV^0$.
We proceed in ``rounds" of $n$ steps. At the beginning of ``round'' $k$ at $t=kn$ we will be at a vertex
$\bv^{k} =(v_0^{j_{0,k}}, v_1^{j_{1,k}} , \ldots , v_{n-1}^{j_{n-1, k}}) \in \sV^{0})$. During the round, with steps numbered $0 \to n-1$, the vertices cyclically rotate (to the left)
and at the start of $i$-th step the vector is initially in  $\sV^{i}$, the leftmost vertex $v_i^{j_i, k}$ is updated to $v_i^{j_{i+1}, k+1}$, by moving on an edge on $\sG_i$ between these two vertices, which has   edge
label $x_{nk+i}$, and the new vertex in $\sG_i$ is moved  all the way to the right, to get a vector in $\sV^{i+1}$. 
 At the end of the round we back in $\sV^0$ at a new vertex vector $\bv^{k+1}.$
We proceed by induction on the number of rounds $k$. The base case $k=0$ starts with  all $j_{i, 0} =0$. 
The induction hypothesis is that at the end of the $r$-th round  we have followed a path on $\sH$ that  incremented motion on each of the $\sG_i$ by one symbol of $x_{i, r}$
and has  moved in the $i$-th vector coordinate 
from vertex $v_{i,r}$ to the  vertex $v_{i, r+1}$,  corresponding to $\sG_i$, That is, at step $nr+i$ we 
produced symbol $x_{i, r+1}$ and at time $t= n(r+1)$ the $i$-th vector component of 
 $\bv^{r+1}= (v_0^{j_{0,r+1}}, \cdots , v_{n-1}^{j_{n-1, r-1}})$, has entry  $v_{i, r+1} \in \sG_i$, for $0 \le i \le n-1$. 
The induction step is completed using 
the fact that each $(e_{i, t} : t \ge 0 )   \in \sP_i$ is a legal edge path in $\sG_i$ that permits taking the next step at $\sG_i$  in the next round. 
We conclude that  there is an infinite path in $\mathcal{H}$ originating at $\bv^{0}$ 
with edge labels $(x_0,x_1,x_2,\ldots)$, producing $(x_t)_{t=0}^{\infty}$ is an 
element of $\widetilde{\mathcal{P}}$. Thus 
$(\prn)_{i=0}^{n-1}\mathcal{P}_i \subseteq \widetilde{\mathcal{P}}.$

It remains to show  the reverse inclusion $\widetilde{\mathcal{P}} \subseteq (\prn)_{i=0}^{n-1}\mathcal{P}_i $. 
Suppose given $(x_k)_{k=0}^{\infty}\in \widetilde{\mathcal{P}}$. Then in the first $n$ steps there is a vertex path
\begin{equation*}
\begin{array}{c}
( v_0^0, v_1^0, \ldots,  v_{n-2}^0, v_{n-1}^0) ; 
\quad ( v_1^{0}, v_{2}^0, \ldots,  v_{n-1}^0, v_0^{j_{0,1}});
\quad (v_{2}^0, v_{3}^0, \ldots ,v_{n-1}^0 ,  v_0^{j_{0,1}}, v_1^{j_{1,1}});
\\
\ldots ; 
\quad (v_{n-1}^0, v_{0}^{j_{0,1}}, v_1^{j_{1,1}}, \ldots, v_{n-2}^{j_{n-2,1}}); 
\quad (v_0^{j_{0,1}}, v_1^{j_{1,1}}, \ldots, v_{n-2}^{j_{n-2,1}}, v_{n-1}^{j_{n-1,1}}); 
\end{array}
\end{equation*}
in $\mathcal{H}$ which can be traversed by edges labeled $x_0,x_1,\ldots, x_{n-1}$. Notice that the first coordinate of a vertex will be the 
last coordinate of the vertex that follows after $n-1$ steps. Since the initial vertex is $(v_{1}^0, v_{2}^0\ldots v_{n-1}^0,v_n^0)$, we know 
that for each $0\leq i\leq n-1$, there is a matching edge in $\mathcal{G}_i$ from $v_i^0$ to $v_i^{j_{i,1}}$. 

For any $k<\infty$, an edge in $\mathcal{H}$ with edge label $x_k$ from vertex
\begin{equation*}
(v_i^{j_{i,r}}, \ldots, v_{n-1}^{j_{n-1,r}}, v_{0}^{j_{0,r+1}}, v_{1}^{j_{1,r+1}}, \ldots . v_{i-1}^{j_{i-1,r+1}} ) \in \sV^{i} , 
\end{equation*}
to vertex
\begin{equation*}
(  v_{i+1}^{j_{i+1,r}}, \ldots v_{n-1}^{j_{n-1,r}}, v_{0}^{j_{0,r+1}}, v_{1}^{j_{1,r+1}}, \ldots , v_{i-1}^{j_{i-1,r+1}}, v_i^{j_{i,r+1}}) \in \sV^{i+1} 
 \end{equation*}
corresponds to a directed  edge in $\mathcal{G}_i$   from $v_i^{j_{i,r}}$ to $v_i^{j_{i,r+1}}$ that has label $x_k$. 
Following our given vertex path in $\mathcal{H}$ for $n-1$ more steps gets us to a vertex in $\sH$ 
whose last coordinate is $v_i^{j_{i,r+1}}$. There is an  edge in $\mathcal{H}$ labeled $x_{k+n}$ emanating from this vertex 
which corresponds to an edge in $\mathcal{G}_i$ labeled $x_{k+n}$ emanating from $v_i^{j_{i,r+1}}$ and going 
to $v_i^{j_{i,r+2}}$. Thus, for each $0\leq i\leq n-1$, the labels $x_i,x_{i+n},x_{i+2n}\ldots$ are the labels of an infinite path in
 $\mathcal{G}$ originating at $v_i^0$, so $(x_k)_{k=0}^{\infty}\in (\prn)_{i=0}^{n-1}\mathcal{P}_i$. We conclude  that 
$\widetilde{\mathcal{P}} \subseteq (\prn)_{i=0}^{n-1}\mathcal{P}_i.$

The claim follows. 

The claim shows  that the interleaving $(\prn)_{i=0}^{n-1}\sP_i$ is the path set $\widetilde{\sP}$, having a presentation $(\sH, \bv^0)$.
Since $\sH$ has $n \prod_{i=0}^{n-1} k_i$ vertices, this  proves (i) .
For (ii), if each of the $\sG_i$  is right-resolving, then it is evident from the interleaving graph product construction that $\sH$ is right-resolving.
Each vertex $\bv$ of $\sV^{i}$  has at most one exit edge having  a given label $a$, inherited from $\sG_i$.
For (iii), if the graph $\sG_i$ is pruned, then each vertex has at least one exit edge. The construction  of edges for $\sH$ then shows that each vertex
in $\sV^{i}$ has an exit edge if and only if each vertex of $\sG_i$ has an exit edge. 
\end{proof} 


\begin{proof}[Proof of Theorem ~\ref{thm:intalgthm}]
Theorem \ref{thm:intalgthm}  is  is an immediate consequence of Theorem ~\ref{thm:algthm}. 
\end{proof}

%
%
\begin{rem}\label{rmk:graph-product-cmt}
(1) The $n$-fold interleaving graph product operation does not always produce minimal right-resolving presentations, even when all the input presentations are minimal right-resolving, see Example \ref{exmp1}.  
 
 (2) The $n$-fold interleaving graph product operation does not always produce reachable presentations when
 all the input presentations are reachable. 
 \end{rem}

%
%
\subsection{Examples}

We present  examples showing that the $n$-fold interleaving pointed graph product, given minimal right-resolving presentations $(\sG_i, v_i)$ as input, 
may not  produce a right-resolving presentation of their interleaving as output.

\begin{exmp}\label{exmp1}
(Non-preservation of minimal right-resolving property: extra automorphisms)
Let $\mathcal{P}_0 = X(\mathcal{G}_0, v_0)$ and $\mathcal{P}_1 = X(\mathcal{G}_1,v_1)$, where $\mathcal{G}_0$ and $\mathcal{G}_1$ are the graphs given in Figure 4.1. 
Evidently $\mathcal{P}_0 = \mathcal{P}_1 = \{0,1\}^\mathbb{N}$, the full shift on two letters, 
and $(\mathcal{G}_0,v_0)$, $(\mathcal{G}_1,v_1)$ are (isomorphic) minimal right-resolving presentations. It is easy to see that $\mathcal{P}_0 \pr \mathcal{P}_1 = \{0,1\}^\mathbb{N}$ 
as well. Figure 4.2 shows the presentation of $\mathcal{P}_0 \pr \mathcal{P}_1$ given by our algorithm. This presentation is right-resolving but  non-minimal; it is a double-covering
of the minimal right-resolving representation.
\end{exmp}

\begin{figure}[ht]\label{fig1}
	\centering
	\psset{unit=1pt}
	\begin{pspicture}(-80,25)(80,70)
		\newcommand{\noden}[2]{\node{#1}{#2}{n}}
		\noden{$v_0$}{-60,50}
		\noden{$v_1$}{60,50}
		\bcircle{n$v_1$}{90}{0}
		\bcircle{n$v_0$}{90}{0}
		\bcircle{n$v_1$}{270}{1}
		\bcircle{n$v_0$}{270}{1}
	\end{pspicture}
	\newline
\hskip 0.5in {\rm FIGURE 4.1.} Presentations of $\mathcal{P}_0$ and $\mathcal{P}_1$ of Example ~\ref{exmp1}.
\newline
\end{figure}

\begin{figure}[ht]\label{fig2}
	\centering
	\psset{unit=1pt}
	\begin{pspicture}(-50,-25)(50,60)
		\newcommand{\noden}[2]{\node{#1}{#2}{n}}
		\noden{$v_1v_0$}{0,0}
		\noden{$v_0v_1$}{0,60}
		\dline{n$v_1v_0$}{n$v_0v_1$}{0,1}{0,1}
	\end{pspicture}
	\newline
\hskip 0.5in {\rm FIGURE 4.2.} Presentation of $\mathcal{P}_0 \pr \mathcal{P}_1$ of Example ~\ref{exmp1}. The  initial vertex is $v_0v_1$.
\newline
\end{figure}

The non-minimality of the presentation constructed in Example ~\ref{exmp1} is a result of the fact that 
the $n$-fold interleaving graph product
construction  keeps track of which input path set each digit comes from. 
If all the input path sets have the same presentation, then the graph product  has a cyclic automorphism of
order $n$.
For  any path set $\mathcal{P}$, the  presentation of the $n$-fold self-interleaving $\mathcal{P} \pr \mathcal{P} \pr \cdots \pr \mathcal{P}$ 
given by the construction  of Theorem ~\ref{thm:algthm} is an $n$-fold covering of another presentation, the one constructed in  \cite[Proposition 3.4]{ABL17}.

\begin{exmp}\label{exmp2} 
(Non-preservation of minimal right-resolving property: failure of follower-separation)
Consider the path sets $\mathcal{Q}_0=\{(0^{\infty})\}\cup\{(0^n12^\infty)|n\in\mathbb{N}\}$ and $\mathcal{Q}_1=\{(32^{\infty})\}$. Figure 4.3 gives minimal right-resolving presentations $(\mathcal{H}_0,v_0)$ of $\mathcal{Q}_0$ and $(\mathcal{H}_1, v_2)$ of $\mathcal{Q}_1$. The presentation of $\mathcal{Q}_0 \pr \mathcal{Q}_1$ given by Theorem \ref{thm:algthm} is shown in Figure 4.4. This presentation is not minimal, since $v_1v_3$ and $v_3v_1$ have the same follower sets. 
However, identifying the vertices $v_1v_3$ and $v_3v_1$ and replacing the edges between them with a single self-loop labeled $2$ will give a minimal right-resolving presentation. This
presentation is shown in Figure 4.5.
\end{exmp}

\begin{figure}[ht]\label{fig3}
	\centering
	\psset{unit=1pt}
	\begin{pspicture}(-130,-30)(130,20)
		\newcommand{\noden}[2]{\node{#1}{#2}{n}}
		\noden{$v_0$}{-100,0}
		\noden{$v_1$}{-50,0}
		\noden{$v_2$}{50,}
		\noden{$v_3$}{100,0}
		\bcircle{n$v_0$}{90}{0}
		\bcircle{n$v_1$}{270}{2}
		\bcircle{n$v_3$}{270}{2}
		\bline{n$v_0$}{n$v_1$}{1}
		\bline{n$v_2$}{n$v_3$}{3}
	\end{pspicture}
\newline
\hskip 0.5in {\rm FIGURE 4.3.} Presentations of $\mathcal{Q}_0$ and $\mathcal{Q}_1$ of Example ~\ref{exmp2}. The initial vertices are
$v_0$ and $v_2$, respectively.
\newline
\end{figure}

\begin{figure}[ht]\label{fig4}
	\centering
	\psset{unit=1pt}
	\begin{pspicture}(-150,30)(100,135)
		\newcommand{\noden}[2]{\node{#1}{#2}{n}}
		\noden{$v_0v_2$}{0,125}
		\noden{$v_2v_0$}{-60,105}
		\noden{$v_0v_3$}{-80,45}
		\noden{$v_3v_0$}{-140,45}
		\noden{$v_2v_1$}{60,105}
		\noden{$v_1v_3$}{80,45}
		\noden{$v_3v_1$}{0,45}
		\bline{n$v_0v_2$}{n$v_2v_1$}{1}
		\bline{n$v_0v_2$}{n$v_2v_0$}{0}
		\bline{n$v_2v_0$}{n$v_0v_3$}{3}
		\bline{n$v_0v_3$}{n$v_3v_1$}{1}
		\bline{n$v_2v_1$}{n$v_1v_3$}{3}
		\dline{n$v_0v_3$}{n$v_3v_0$}{0}{2}
		\dline{n$v_1v_3$}{n$v_3v_1$}{2}{2}
	\end{pspicture}
\newline
\newline
\hskip 0.5in {\rm FIGURE 4.4.} Presentation of $\mathcal{Q}_0 \pr \mathcal{Q}_1$ of Example ~\ref{exmp2}. The  initial vertex is $v_0v_2$.
\newline
\end{figure}

\begin{figure}[ht]\label{fig5}
	\centering
	\psset{unit=1pt}
	\begin{pspicture}(-150,20)(100,135)
		\newcommand{\noden}[2]{\node{#1}{#2}{n}}
		\noden{$v_0v_2$}{0,125}
		\noden{$v_2v_0$}{-60,105}
		\noden{$v_0v_3$}{-60,45}
		\noden{$v_3v_0$}{-140,45}
		\noden{$v_2v_1$}{60,105}
		\noden{$v_3v_1$}{60,45}
		\bline{n$v_0v_2$}{n$v_2v_1$}{1}
		\bline{n$v_0v_2$}{n$v_2v_0$}{0}
		\bline{n$v_2v_0$}{n$v_0v_3$}{3}
		\bline{n$v_0v_3$}{n$v_3v_1$}{1}
		\bline{n$v_2v_1$}{n$v_3v_1$}{3}
		\dline{n$v_0v_3$}{n$v_3v_0$}{0}{2}
		\bcircle{n$v_3v_1$}{270}{2}
	\end{pspicture}
\newline
\hskip 0.2in {\rm FIGURE 4.5.} Minimal right-resolving presentation of $\mathcal{Q}_0 \pr \mathcal{Q}_1$ of Example ~\ref{exmp2}. The  initial vertex is $v_0v_2$.
\newline
\end{figure}

%
\section{Interleaving  closure operations } \label{sec:5}

The paper  \cite{ALS21} shows that interleavings of principal $n$-decimations 
define a series of closure operations $X \mapsto X^{[n]}$ for  arbitrary subsets $X \subset \sA^{\NN}$.
We recall properties of these operations established in \cite{ALS21} which relate them to
$n$-fold decimations. 
Then we  show that the class $\sC(\sA)$ of path sets is stable  under these closure operations.

%
%
\subsection{Interleaving  closure operations}

Decimations combined with $n$-interleavings define a series of closure operations on path sets.
The closure operations are defined for arbitrary subsets $X \subset \sA^{\NN}$, as described in \cite{ALS21}.

\begin{defn} \label{def:dec-closed} 
(Interleaving closure operations) Given a subset $X$ of $\msrA^{\NN}$ the 
  {\em  $n$-fold interleaving closure $X^{[n]}$ of $X$}  is given by the $n$-fold interleaving 
\begin{equation*}
X^{[n]} := \psi_{0, n}(X) \pr\psi_{1,n}(X) \ast \cdots \pr \psi_{n-1, n}(X).
\end{equation*}
\end{defn}

We recall some results  from \cite{ALS21}. The following result parts  (1) and (2) are consequences of
Theorem 4.2 of \cite{ALS21}), and parts (3) and (4) are consequences of Theorem 4.12 of \cite{ALS21}.

\begin{thm}\label{thm:inclusion}
{\rm ($n$-fold interleaving closure)} \,
Given a subset $X$ of $\msrA^{\NN}$ one has the set inclusion 
\begin{equation}\label{eqn:interleave-inclusion}
X \subseteq X^{[n]},
\end{equation}
where $X^{[n]}$ is the $n$-fold interleaving closure of $X$.
If $X \subseteq Y $ then $X^{[n]} \subseteq Y^{[n]}$. 
In addition:

(1) The operation $X \mapsto X^{[n]}$ is idempotent; i.e., 
$(X^{[n]})^{[n]} = X^{[n]}$ for all $X \subset \sA^{\NN}$. 

(2) The  $n$-fold interleaving closure $X^{[n]}$ has the property that  it is the maximal set $Y$ such
that $X \subseteq Y$ and 
\begin{equation*}
\psi_{j,n}(Y) = \psi_{j, n}(X) \quad \mbox{for} \quad 0\le j \le n-1.
\end{equation*}

(3) If $X$ is a closed set in $\msrA^{\NN}$
then each decimation set $X_{j,n}= \psi_{j,n}(X)$ is a closed set.
The n-th interleaving closure $X^{[n]}$ is  a closed set in $\sA^{\NN}$.

(4) The $n$-fold interleaving closure operation  commutes with the closure operation on 
the product topology on $\sA^{\NN}$,  in the sense that
$$
(\overline{X})^{[n]}  = \overline{X^{[n]}}. 
$$
\end{thm}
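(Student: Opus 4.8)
Although the four parts are recorded in \cite{ALS21}, a self-contained argument is available, and the plan is to reduce every assertion to the pointwise characterization of the interleaving closure furnished by Definition~\ref{interleavingdef}(2): for any $X \subseteq \sA^{\NN}$,
\[
X^{[n]} = \{\mathbf{y} \in \sA^{\NN} : \psi_{j,n}(\mathbf{y}) \in \psi_{j,n}(X)\ \text{for all}\ 0 \le j \le n-1\} = \bigcap_{j=0}^{n-1} \psi_{j,n}^{-1}\big(\psi_{j,n}(X)\big).
\]
From the first description the opening inclusion $X \subseteq X^{[n]}$ is immediate, since every $\mathbf{x} \in X$ satisfies $\psi_{j,n}(\mathbf{x}) \in \psi_{j,n}(X)$ by the definition of the decimation of a set; and monotonicity ($X \subseteq Y \Rightarrow X^{[n]} \subseteq Y^{[n]}$) follows at once from $\psi_{j,n}(X) \subseteq \psi_{j,n}(Y)$ for each $j$.

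The identity $\psi_{j,n}(X^{[n]}) = \psi_{j,n}(X)$ for each $j$ drives both (1) and (2). The inclusion $\subseteq$ is read off the characterization, while for $\supseteq$ I would take $\mathbf{z} \in \psi_{j,n}(X)$, write $\mathbf{z} = \psi_{j,n}(\mathbf{x})$ with $\mathbf{x} \in X \subseteq X^{[n]}$, and conclude $\mathbf{z} \in \psi_{j,n}(X^{[n]})$. Idempotency (1) is then one line:
\[
(X^{[n]})^{[n]} = (\prn)_{j=0}^{n-1} \psi_{j,n}(X^{[n]}) = (\prn)_{j=0}^{n-1} \psi_{j,n}(X) = X^{[n]}.
\]
For maximality (2), $X^{[n]}$ is itself a witness ($X \subseteq X^{[n]}$ with matching decimations); and if any $Y$ satisfies $X \subseteq Y$ and $\psi_{j,n}(Y) = \psi_{j,n}(X)$ for all $j$, then each $\mathbf{y} \in Y$ has $\psi_{j,n}(\mathbf{y}) \in \psi_{j,n}(Y) = \psi_{j,n}(X)$, so $\mathbf{y} \in X^{[n]}$ by the characterization, giving $Y \subseteq X^{[n]}$.

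The topological statement (3) I would get from continuity of each $\psi_{j,n} \colon \sA^{\NN} \to \sA^{\NN}$ in the product topology (it merely reads off one arithmetic progression of symbols) together with compactness of $\sA^{\NN}$: $\psi_{j,n}(X)$ is the continuous image of the compact set $X$, hence closed, and $X^{[n]} = \bigcap_j \psi_{j,n}^{-1}(\psi_{j,n}(X))$ is a finite intersection of preimages of closed sets, hence closed. (Closedness of $X^{[n]}$ also follows from the remark in Section~\ref{subsec:12} that interleavings of closed sets are closed.)

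The main obstacle is the commutation (4), $(\overline{X})^{[n]} = \overline{X^{[n]}}$. One inclusion is soft: $X \subseteq \overline{X}$ gives $X^{[n]} \subseteq (\overline{X})^{[n]}$ by monotonicity, and since $(\overline{X})^{[n]}$ is closed by (3) it contains $\overline{X^{[n]}}$. The reverse inclusion is the substantive one, which I would prove by finite-prefix approximation. Given $\mathbf{y} \in (\overline{X})^{[n]}$, continuity yields $\psi_{j,n}(\mathbf{y}) \in \psi_{j,n}(\overline{X}) \subseteq \overline{\psi_{j,n}(X)}$ for each $j$. Fix a prefix length $N = nk$; since $\psi_{j,n}(\mathbf{y})$ lies in $\overline{\psi_{j,n}(X)}$, its length-$k$ prefix extends to some $\mathbf{z}_j \in \psi_{j,n}(X)$, and the interleaving $\mathbf{w} = (\prn)_{j=0}^{n-1} \mathbf{z}_j$ satisfies $\psi_{j,n}(\mathbf{w}) = \mathbf{z}_j \in \psi_{j,n}(X)$, so $\mathbf{w} \in X^{[n]}$, while agreeing with $\mathbf{y}$ on its first $N$ symbols. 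Letting $N \to \infty$ gives $\mathbf{y} \in \overline{X^{[n]}}$. The one point demanding care throughout is the bookkeeping between the arithmetic-progression indexing of the decimations and the block indexing of the interleaving, so that \emph{agreement on a prefix of length $nk$} matches exactly \emph{all $n$ decimations agreeing on a prefix of length $k$}.
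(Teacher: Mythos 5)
Your proposal is correct, but it is worth noting that the paper itself supplies no proof of this theorem: it simply cites the companion paper \cite{ALS21} (Theorems 4.2 and 4.12 there) for all four parts. What you have done differently is to give a self-contained argument organized entirely around the pointwise characterization $X^{[n]} = \bigcap_{j=0}^{n-1}\psi_{j,n}^{-1}\bigl(\psi_{j,n}(X)\bigr)$, which is indeed immediate from Definition~\ref{interleavingdef}(2) applied to the factors $\psi_{j,n}(X)$. This single observation makes the algebraic assertions nearly automatic: the inclusion $X \subseteq X^{[n]}$, monotonicity, the key identity $\psi_{j,n}(X^{[n]}) = \psi_{j,n}(X)$, and then (1) and (2) each in a line or two. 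Your topological arguments are also sound: for (3), continuity of each $\psi_{j,n}$ plus compactness of $\sA^{\NN}$ (finite alphabet) gives closedness of $\psi_{j,n}(X)$ as a compact image, and closedness of $X^{[n]}$ as a finite intersection of continuous preimages of closed sets; for (4), the soft inclusion $\overline{X^{[n]}} \subseteq (\overline{X})^{[n]}$ follows from monotonicity plus (3), and your prefix-approximation argument for the reverse inclusion is correct --- the index bookkeeping works out exactly as you say, since the first $nk$ symbols of $\mathbf{y}$ are precisely the first $k$ symbols of each $\psi_{j,n}(\mathbf{y})$, $0 \le j \le n-1$, and the interleaving $\mathbf{w} = (\prn)_{j=0}^{n-1}\mathbf{z}_j$ recovers $\psi_{j,n}(\mathbf{w}) = \mathbf{z}_j$ by relation \eqref{eqn:basic-relation}. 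What your route buys is a proof readable inside this paper with no external dependency; what the paper's citation buys is brevity, deferring these verifications to where the closure operations $X \mapsto X^{[n]}$ were first developed for general subsets of $\sA^{\NN}$.
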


The following result is  is a consequence of Theorem 2.8 of \cite{ALS21}.

\begin{thm}\label{thm:DIF2} 
{\rm (Decimations and interleaving factorizations)}
A  general subset $X$ of $\msrA^{\NN}$  
has an {$n$-fold interleaving factorization} 
$X = X_{0} \pr X_{1} \pr \cdots \pr X_{n-1}$
if and only if $X= X^{[n]}$.
In this case,   each 
$$X_{i} = \psi_{j,n}(X) \quad \mbox{for} \quad 0 \le j \le n-1,$$
so when  they exist,  $n$-fold interleaving factorizations are unique.
\end{thm}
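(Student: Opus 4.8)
The plan is to prove the biconditional in both directions and then read off uniqueness from the resulting identification of the factors. The reverse direction is immediate: if $X = X^{[n]}$, then by Definition \ref{def:dec-closed} the equality
\[
X \;=\; X^{[n]} \;=\; \psi_{0,n}(X) \pr \psi_{1,n}(X) \pr \cdots \pr \psi_{n-1,n}(X)
\]
already exhibits $X$ as an $n$-fold interleaving, with factors $X_j := \psi_{j,n}(X)$; no further argument is needed. So the entire content lies in the forward direction, and the hinge is the set-level identity \eqref{eqn:basic-relation2}, namely $\psi_{j,n}(X_0 \pr \cdots \pr X_{n-1}) = X_j$ for each $0 \le j \le n-1$.

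To establish this identity I would argue by double inclusion. The inclusion $\psi_{j,n}(X_0 \pr \cdots \pr X_{n-1}) \subseteq X_j$ is forced directly by the defining condition of the interleaving in Definition \ref{interleavingdef}(2): any $\by$ in the interleaving satisfies $\psi_{j,n}(\by) \in X_j$ by definition, so every element of the image lies in $X_j$. For the reverse inclusion, given $w \in X_j$ I would pick witnesses $z_i \in X_i$ for each $i \ne j$, set $z_j = w$, and form the pointwise interleaving $\by = z_0 \pr z_1 \pr \cdots \pr z_{n-1}$ as in Definition \ref{interleavingdef}(1). By the pointwise relation \eqref{eqn:basic-relation} this $\by$ lies in $X_0 \pr \cdots \pr X_{n-1}$ and satisfies $\psi_{j,n}(\by) = z_j = w$, giving $X_j \subseteq \psi_{j,n}(X_0 \pr \cdots \pr X_{n-1})$.

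Granting the identity, the forward direction closes quickly: from $X = X_0 \pr \cdots \pr X_{n-1}$ we get $\psi_{j,n}(X) = X_j$ for all $j$, whence
\[
X^{[n]} \;=\; \psi_{0,n}(X) \pr \cdots \pr \psi_{n-1,n}(X) \;=\; X_0 \pr \cdots \pr X_{n-1} \;=\; X.
\]
Uniqueness then comes for free: both directions together show that whenever a factorization exists its factors are \emph{forced} to equal $\psi_{j,n}(X)$, which depends only on $X$, so any two factorizations coincide termwise. I expect the only genuine obstacle to be the reverse inclusion in the set-level identity, which tacitly needs each factor $X_i$ with $i \ne j$ to be nonempty in order to supply the witness $z_i$. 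I would dispatch this with a short case split: if $X \ne \emptyset$ then no factor can be empty (an empty factor would force the whole interleaving, hence $X$, to be empty), while if $X = \emptyset$ the statement is trivial since every $\psi_{j,n}(X) = \emptyset$. This nonemptiness bookkeeping is the one place requiring care; the rest is formal manipulation of the definitions.
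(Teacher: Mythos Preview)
Your proof is correct and self-contained. Note, however, that the paper does not actually prove this theorem in-line: it simply cites it as a consequence of Theorem 2.8 of \cite{ALS21}. So there is no ``paper's own proof'' to compare against here; you have supplied what the paper outsources. The argument you give---verifying the set-level identity \eqref{eqn:basic-relation2} by double inclusion, then reading off both the biconditional and uniqueness---is the natural one and almost certainly matches what \cite{ALS21} does.

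One small comment on your empty-set bookkeeping. Your case split correctly establishes the biconditional $X = X^{[n]}$ even when $X = \emptyset$ (since then all decimations are empty and $X^{[n]} = \emptyset$). But your sentence ``the statement is trivial'' glosses over the fact that the \emph{uniqueness} clause genuinely fails for $X = \emptyset$: for instance $\emptyset = \emptyset \pr \sA^{\NN} = \sA^{\NN} \pr \emptyset$, and these factorizations do not have $X_j = \psi_{j,n}(\emptyset)$. This is a defect in the theorem's statement (which implicitly assumes $X \neq \emptyset$ for the uniqueness assertion), not in your argument. You might simply flag that the identity $X_j = \psi_{j,n}(X)$, and hence uniqueness, is asserted only for nonempty $X$.
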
 

%
\subsection{Interleaving closures of path sets}
\label{subsec:52}

We specialize to path sets, and show all the 
 $n$-fold interleaving closures $\sP^{[n]}$, $(n \ge 1)$, of a path set $\sP$ are path sets.
(Theorem \ref{thm:factorthm}). 
 
\begin{thm}\label{thm:factorthm2}
{\rm ($\sC(\sA)$  is stable under $n$-fold  interleaving closure operations)} 
If $\sP$ is a path set, then for each $n \ge 1$ the $n$-fold interleaving closure $\sP^{[n]}$ is a path set.
In addition, if $\sP$ is $n$-factorizable then each of its $n$-fold intereaving factors $\sP_j= \psi_{j,n}(\sP)$  for $0 \le j\le n-1$
are path sets.
\end{thm}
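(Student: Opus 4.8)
The plan is to derive both assertions directly from results already established in the excerpt, combining the decimation-closure theorem (Theorem \ref{thm:decimation}) with the interleaving-closure theorem (Theorem \ref{thm:intalgthm}). The key observation is that the $n$-fold interleaving closure is, by its very definition, an interleaving of decimations. Explicitly, Definition \ref{def:dec-closed} gives
\begin{equation*}
\sP^{[n]} = \psi_{0,n}(\sP) \pr \psi_{1,n}(\sP) \pr \cdots \pr \psi_{n-1,n}(\sP),
\end{equation*}
so $\sP^{[n]}$ is built from $\sP$ in two stages that are each already known to preserve path-set-hood.

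First I would handle the principal decimations. By Theorem \ref{thm:decimation}, each $\psi_{j,n}(\sP)$ for $0 \le j \le n-1$ is a path set, since $\sC(\sA)$ is closed under all decimation operations. This gives an ordered list of $n$ path sets $\sP_0, \sP_1, \ldots, \sP_{n-1}$ with $\sP_j = \psi_{j,n}(\sP)$. Next I would apply Theorem \ref{thm:intalgthm}, the closure of $\sC(\sA)$ under $n$-fold interleaving: the interleaving $\sP_0 \pr \sP_1 \pr \cdots \pr \sP_{n-1}$ of path sets is again a path set. Since this interleaving equals $\sP^{[n]}$ by definition, we conclude $\sP^{[n]} \in \sC(\sA)$, proving the first assertion. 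Constructively, one could even produce a presentation by first running the modified higher-power algorithm of Theorem \ref{thm:decimation_alg2} on a presentation of $\sP$ to obtain presentations of each $\psi_{j,n}(\sP)$, then feeding these into the interleaving pointed graph product of Theorem \ref{thm:algthm}; but for the existence statement the two cited closure theorems suffice.

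For the second assertion, suppose $\sP$ is $n$-factorizable, meaning $\sP = \sP^{[n]}$ in the sense of Theorem \ref{thm:DIF2}. That theorem also identifies the unique interleaving factors as the principal decimations: each factor equals $\psi_{j,n}(\sP)$ for $0 \le j \le n-1$. But Theorem \ref{thm:decimation} already tells us these decimations are path sets. Hence every interleaving factor $\sP_j = \psi_{j,n}(\sP)$ lies in $\sC(\sA)$, which is exactly the claim. I do not expect any serious obstacle here, since both ingredients are fully established earlier in the excerpt; the only real content is recognizing that the interleaving closure is definitionally an interleaving of decimations and that factorization uniqueness (Theorem \ref{thm:DIF2}) pins the factors down to be precisely those decimations. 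The main point requiring care is simply to invoke Theorem \ref{thm:DIF2} correctly so that the factors are identified with decimations rather than treated as arbitrary unknown factors.
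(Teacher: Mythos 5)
Your proposal is correct and follows exactly the paper's own proof: both parts combine Theorem \ref{thm:decimation} (decimations of path sets are path sets) with Theorem \ref{thm:intalgthm2} (interleavings of path sets are path sets) for the closure statement, and invoke Theorem \ref{thm:DIF2} to identify the factors of an $n$-factorizable $\sP$ with the principal decimations $\psi_{j,n}(\sP)$ for the second statement. No gaps; this is the same argument.
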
.


\begin{proof}
If $\sP$ is a path set then by  Theorem \ref{thm:decimation} 
each $\psi_{j,n}(\sP)$ is a path set.
Thus
$$
\sP^{[n]} := \psi_{0, n}(\sP) \pr \psi_{1, n}(\sP) \pr \cdots \pr \psi_{n-1,n}(\sP), 
$$
is a path set by Theorem \ref{thm:intalgthm2}.

Now suppose $\sP$ has an $n$-fold interleaving factorization 
$$
\sP = X_{0,n} \pr X_{1, n} \pr \cdots \pr X_{n-1, n}.
$$
Then  by Theorem \ref{thm:DIF2} $X_{j,n}= \psi_{j,n}(\sP)$
for $0 \le j\le n-1$. But $\psi_{j,n}(\sP)$ is a path set by Theorem \ref{thm:decimation}.
\end{proof}

%
%
\section{Interleaving factorizations } \label{sec:5B}

We recall results on interleaving factorizations of general sets $X \subset \sA^{\NN}$ from \cite{ALS21},
relating to closure operations. We deduce that the set $\sC^{\infty}(X)$ of infinitely factorizable path sets
is stable under $n$-fold interleavings of its members, for all $n \ge 1$. Finally we obtain a bound on the
size of minimal right-resolving presentations of interleaving factors of path sets, which are ncessarily
principal decimations, which improves on
the bound of Theorem \ref{thm:decimation_alg} for general decimations of path sets.

%
%
\subsection{Structure of interleaving factors:arbitrary sets $X$}\label{subsec:55}

We recall from \cite{ALS21} results on the structure of possible interleaving factorizations for a general set $X \subset \sA^{\NN}$,
and derive corollaries of one of them.  The following result is a consequence of Theorem 2.12 of \cite{ALS21}.

\begin{thm}   \label{thm:lcm-factorization} 
{\rm (Divisibility for interleaving factorizations) }

(1) Let   $X \subseteq \sA^{\NN}$ have an $n$-fold interleaving factorization. If $d$ divides $n$, then $X$ also has an 
$d$-fold interleaving factorization.

(2) Let $X$ have $m$-fold and $n$-fold interleaving factorizations. Then $X$ has an
$\ell$-fold interleaving factorization, where $\ell= \lcm(m,n)$ is the least common multiple of $m$ and $n$.
\end{thm}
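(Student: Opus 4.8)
The plan is to recast both statements in the language of the interleaving closure and reduce them to a single structural lemma. By Theorem \ref{thm:DIF2}, ``$X$ has an $n$-fold interleaving factorization'' is equivalent to $X = X^{[n]}$, and unwinding the definition of $n$-fold interleaving of sets, the membership criterion reads: $\bx \in X$ iff $\psi_{j,n}(\bx) \in \psi_{j,n}(X)$ for every $0 \le j \le n-1$. The two elementary tools I would record first are: (a) the composition law for principal decimations, $\psi_{b,e}\circ\psi_{a,d} = \psi_{a+bd,\,de}$ for $0\le a\le d-1$, $0\le b\le e-1$, together with its set-level form $\psi_{b,e}(\psi_{a,d}(S)) = \psi_{a+bd,de}(S)$; and (b) the reformulation that, writing $\pi_k(\bx) := (\psi_{0,k}(\bx),\ldots,\psi_{k-1,k}(\bx))$ for the bijection underlying \eqref{eqn:basic-relation}, a set $S$ is $k$-factorizable iff $\pi_k(S)$ is a product (``rectangle'') set $\prod_{j}\psi_{j,k}(S)$. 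Both follow directly from the definitions.

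For Part (1), write $n = de$ with $d\mid n$. Always $X \subseteq X^{[d]}$ (Theorem \ref{thm:inclusion}), so it suffices to show $X^{[d]}\subseteq X$. Given $\bx$ with $\psi_{a,d}(\bx)\in\psi_{a,d}(X)$ for all $a$, fix $c$ with $0\le c\le n-1$ and write $c = a + bd$. By the composition law $\psi_{c,n}(\bx) = \psi_{b,e}(\psi_{a,d}(\bx))$; choosing $\mathbf{z}\in X$ with $\psi_{a,d}(\mathbf{z})=\psi_{a,d}(\bx)$ gives $\psi_{c,n}(\bx) = \psi_{b,e}(\psi_{a,d}(\mathbf{z})) = \psi_{c,n}(\mathbf{z})\in\psi_{c,n}(X)$. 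Since $(a,b)\mapsto a+bd$ runs bijectively over all residues mod $n$, this holds for every $c$, whence $\bx\in X^{[n]} = X$.

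For Part (2), set $g=\gcd(m,n)$, $\ell = \lcm(m,n)$, and $f = \ell/m = n/g$. Since $X\subseteq X^{[\ell]}$ it suffices to prove $X^{[\ell]}\subseteq X$; and since $X = X^{[m]}$, it is enough to show, for $\bx\in X^{[\ell]}$, that $\psi_{a,m}(\bx)\in\psi_{a,m}(X)$ for each $0\le a\le m-1$. Putting $\mathbf{u} = \psi_{a,m}(\bx)$ and $Y_a = \psi_{a,m}(X)$, the composition law gives $\psi_{b,f}(\mathbf{u}) = \psi_{a+bm,\ell}(\bx)\in\psi_{a+bm,\ell}(X) = \psi_{b,f}(Y_a)$ for all $0\le b\le f-1$, i.e. $\mathbf{u}\in Y_a^{[f]}$. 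The argument is then closed by the lemma below, applied with the \emph{other} factorization $X = X^{[n]}$.

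The main obstacle is the following Key Lemma: if $X = X^{[N]}$ then each decimation $\psi_{i,d}(X)$ is $N_1$-factorizable, where $N_1 = N/\gcd(N,d)$. Granting this with $N=n$, $d=m$ (so $N_1 = n/g = f$) yields $Y_a^{[f]} = Y_a$, hence $\mathbf{u}\in Y_a$, completing Part (2). I would prove the lemma through the rectangle reformulation (b): writing $D = \gcd(N,d)$, $d = Dd_1$, $N = DN_1$ with $\gcd(d_1,N_1)=1$, the components of $\pi_{N_1}(\psi_{i,d}(\bx))$ are $\psi_{i+bd,L}(\bx)$ with $L = \lcm(d,N) = dN_1 = Nd_1$, which by the composition law equal $\psi_{\beta_b,d_1}(\psi_{\alpha_b,N}(\bx))$ where $i+bd = \alpha_b + \beta_b N$ with $0\le\alpha_b\le N-1$. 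The decisive point is that the residues $\alpha_b = (i+bd)\bmod N$ are \emph{distinct} for $b = 0,\ldots,N_1-1$: if $(b-b')d\equiv 0 \pmod N$ then $N_1\mid(b-b')d_1$, and $\gcd(N_1,d_1)=1$ forces $b=b'$. Thus these $N_1$ components read off $N_1$ distinct coordinates of the rectangle $\pi_N(X) = \prod_s \psi_{s,N}(X)$ and hence vary independently; therefore $\pi_{N_1}(\psi_{i,d}(X)) = \prod_b \psi_{\beta_b,d_1}(\psi_{\alpha_b,N}(X))$ is again a rectangle, which is exactly $N_1$-factorizability. The only delicate bookkeeping is verifying the index identities $L=\lcm(d,N)=Nd_1$ and the distinctness of the $\alpha_b$; everything else is formal.
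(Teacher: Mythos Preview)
Your argument is correct. The composition identity $\psi_{b,e}\circ\psi_{a,d}=\psi_{a+bd,de}$, the rectangle reformulation of $n$-factorizability, and the arithmetic verification that the residues $\alpha_b=(i+bd)\bmod N$ for $b=0,\dots,N_1-1$ are pairwise distinct (via $\gcd(d_1,N_1)=1$) are all sound, and together they yield the Key Lemma cleanly. One small point worth making explicit in a write-up: the decomposition $i+bd=\alpha_b+\beta_b N$ with $0\le\alpha_b\le N-1$ automatically gives $0\le\beta_b\le d_1-1$ because $0\le i+bd\le dN_1-1=Nd_1-1$, so the composition law applies in the principal range.

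As for comparison with the paper: the paper does not give its own proof of this statement. It simply records the result as a consequence of \cite[Theorem~2.12]{ALS21} and moves on. Your proposal therefore supplies a self-contained argument where the paper defers to the companion reference. The structure you use---reduce both parts to the closure criterion $X=X^{[k]}$, prove Part~(1) directly from the composition law, and then isolate the Key Lemma (decimations of an $N$-factorizable set inherit $N/\gcd(N,d)$-factorizability) to close Part~(2)---is a natural and efficient route, and is in the same spirit as the treatment in \cite{ALS21}.
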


An immediate consequence of this result is a dichotomy. 
For a general set $X \subset \sA^{\NN}$, exactly one of the following hold.
\begin{enumerate}
\item[(1)]
$X$ is factorizable for infinitely many $n$, 
\item[(2)]
 $X$ is $n$-factorizable for
a finite set of $n$, which are exactly the divisors of a fixed integer $f= f(X)$. 
\end{enumerate}.

The following result shows that if $X$ is a closed set, then 
infinite factorizability  implies $n$-factorizability for all $n \ge 1$. 
It is a consequence of Theorem 2.13 of \cite{ALS21}.

\begin{thm} \label{thm:61} 
{\rm ( Classification of infinitely factorizable closed $X$)} 
 For a closed set $X \subseteq \msrA^{\NN}$ where $\msrA$ is a finite alphabet,
the following properties are equivalent.
\begin{enumerate}
\item[(i)]
$X$ is infinitely factorizable; i.e., $X$ has an $n$-interleaving factorization for infinitely many $n \ge 1$.
\item[(ii)]
$X$ has an $n$-interleaving factorization for all $n \ge 1$. 
\item[(iii)]
For each $k \ge 0$  there are nonempty subsets $\msrA_k \subset \msrA$ such that $X = \prod_{k=0}^{\infty} \msrA_k$
is a countable  product of finite sets  with the product topology.
\end{enumerate}
\end{thm}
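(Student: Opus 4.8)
The plan is to establish the cycle $(\mathrm{iii}) \Rightarrow (\mathrm{ii}) \Rightarrow (\mathrm{i}) \Rightarrow (\mathrm{iii})$, where throughout I assume $X \neq \emptyset$ (the empty set is factorizable at every level yet cannot be written as a product of nonempty $\sA_k$, so this mild convention is genuinely needed). The implication $(\mathrm{ii}) \Rightarrow (\mathrm{i})$ is immediate. For $(\mathrm{iii}) \Rightarrow (\mathrm{ii})$, I would compute directly with a product set $X = \prod_{k=0}^{\infty} \sA_k$: its $j$-th principal decimation at level $n$ is again a product set, $\psi_{j,n}(X) = \prod_{i=0}^{\infty} \sA_{j+in}$, and reassembling these via Definition~\ref{interleavingdef} imposes exactly the constraint $z_{j+in} \in \sA_{j+in}$ at coordinate $j+in$. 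Since every coordinate $k$ is uniquely of the form $j+in$ with $0 \le j \le n-1$, the interleaving recovers $\prod_k \sA_k = X$, so $X = X^{[n]}$ for every $n$; by Theorem~\ref{thm:DIF2} this is $n$-factorizability for all $n$.

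The substance is $(\mathrm{i}) \Rightarrow (\mathrm{iii})$. I would define $\sA_k := \{a \in \sA : x_k = a \text{ for some } x \in X\}$, the set of symbols occurring at coordinate $k$; each $\sA_k$ is nonempty and $X \subseteq \prod_{k=0}^{\infty} \sA_k$ trivially. It remains to prove the reverse inclusion. Fix $x \in \prod_k \sA_k$. Since $X$ is closed in the product topology, $x \in X$ will follow once I show that for every $N$ the block $x_0 x_1 \cdots x_{N-1}$ is an initial block of some element of $X$.

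Here is the key step, which is also where the main obstacle lies. Because $X$ is factorizable for infinitely many $n$, I may choose a factorizing level $n \ge N$. Then the coordinates $0,1,\dots,N-1$ lie in pairwise distinct residue classes modulo $n$, and each such coordinate $j$ is the leading coordinate of its decimation $\psi_{j,n}(X)$. For $0 \le j \le N-1$, the relation $x_j \in \sA_j$ gives some $y \in X$ with $y_j = x_j$, whence $\psi_{j,n}(y) \in \psi_{j,n}(X)$ has leading symbol $x_j$; for $N \le j \le n-1$ I pick any element of the nonempty decimation $\psi_{j,n}(X)$. Interleaving these $n$ chosen sequences, using Theorem~\ref{thm:DIF2} to identify $X$ with $\psi_{0,n}(X) \pr \cdots \pr \psi_{n-1,n}(X)$, produces an element $z \in X$ whose coordinate $j$ equals $x_j$ for all $0 \le j \le N-1$. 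Thus $x_0 \cdots x_{N-1}$ extends to an element of $X$, and closedness of $X$ yields $x \in X$, completing $X = \prod_k \sA_k$.

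The crux is the freedom to send $n$ to infinity: factorizability at a single bounded level is not enough, since (as the divisibility structure of Theorem~\ref{thm:lcm-factorization} already shows) the factorizable levels can form a proper $\lcm$- and divisor-closed set, such as the powers of a single prime. It is precisely the combination of \emph{arbitrarily large} factorizing levels with topological closedness that forces the per-coordinate independence of the constraints defining $X$; keeping these two ingredients separate, and exploiting that a level $n \ge N$ spreads the first $N$ coordinates across distinct residue classes, is the part of the argument I would take most care to get right.
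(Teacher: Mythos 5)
Your proof is correct, and your caveat that $X \neq \emptyset$ is a genuine (if minor) point: under the paper's definition of factorization via $X = X^{[n]}$, the empty set satisfies (i) and (ii) but not (iii), so the convention is needed. Be aware, though, that the paper does not actually prove this theorem: it imports it as a consequence of Theorem 2.13 of \cite{ALS21}, and the only glimpse of that proof given here is Remark \ref{rem:64a}, which indicates the cited argument is organized contrapositively --- if $X$ fails to have the product form (iii), then it has a $(k,\ell)$-missing configuration (a block $a_k a_{k+1}\cdots a_{k+\ell}$ with each $a_{k+i} \in \sA_{k+i}$ occurring in no element of $X$), and such a configuration rules out every $n$-fold factorization with $n \ge k+\ell+1$, hence rules out infinite factorizability. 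Your argument is the direct dual of this: a factorizing level $n \ge N$ places the coordinates $0,\dots,N-1$ in distinct residue classes modulo $n$, so choosing one witness per coordinate from the principal decimations $\psi_{j,n}(X)$ and interleaving them (an element of $X$ because $X = X^{[n]}$, via Theorem \ref{thm:DIF2}) realizes any prescribed block of $\prod_k \sA_k$ as an initial block of an element of $X$, and closedness finishes the inclusion $\prod_k \sA_k \subseteq X$. The two routes are logically equivalent --- a missing configuration is exactly a block of $\prod_k \sA_k$ that cannot be realized in $X$ --- but yours has the merit of being entirely self-contained in the definitions and results restated in this paper (including the direct computation that product sets satisfy $X = X^{[n]}$ for all $n$, which gives (iii) $\Rightarrow$ (ii)), whereas the paper's reader must consult \cite{ALS21} for the substance of (i) $\Rightarrow$ (iii).
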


\begin{rem}\label{rem:64a} 
(1) If $|\sA| \ge 2$, then there are uncountably many infinitely factorizable closed sets $X \subset \sA^{\NN}$,
while there are only countably many path sets.

(2) For   $k, \ell \ge 1$ and a
finite set of consecutive $\sA_k, \sA_{k+1}, \sA_{k+2}, ... \sA_{k+\ell}$  such that there is a block
$a_k a_{k+1} \cdots a_{k+\ell}$ with each $a_{k+i} \in \msrA_{k+i}$ for $0\le i \le \ell$  that does not occur in any element of $X$,
we say that $X$ has   a  \emph{ $(k , \ell)$-missing-configuration}. 
The proof  shows that   existence of a $(k, \ell)$-missing-configuration certifies that $X$ has no 
$n$-fold interleaving factorization with $n \ge k+ \ell +1$. The proof of Theorem \ref{thm:61} given in \cite{ALS21}
shows that if $X$ is not infinitely factorizable then it has a $(k, \ell)$-missing configuration for some finite $k, \ell \ge 1$. 
\end{rem}


\begin{cor}\label{cor:62}
Let $X$ be an infinitely factorizable closed subset of $\mathcal{A}^{\NN}$. Then it  is factorizable for all $n \ge 1$, so  its
factor set $\fF(X)$ contains all decimations
$\psi_{j, n}(X)$ for  $n \ge 1$ and $0 \le j \le n-1$. 
Each decimated set $\psi_{j, n}(X)$ is also infinitely factorizable. 
\end{cor}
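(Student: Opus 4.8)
The plan is to reduce everything to the structural characterization of infinitely factorizable closed sets provided by Theorem \ref{thm:61}. The first assertion, that $X$ is factorizable for all $n \ge 1$, is precisely the equivalence (i) $\Leftrightarrow$ (ii) of that theorem. For the second assertion, note that $n$-factorizability of $X$ means $X = X^{[n]}$, so by Theorem \ref{thm:DIF2} the unique factors of $X$ in its $n$-fold interleaving factorization are exactly the principal decimations $\psi_{j,n}(X)$ for $0 \le j \le n-1$; since this holds for every $n \ge 1$, the factor set $\fF(X)$ contains all such decimations.

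The substance of the corollary is the final claim, that each $\psi_{j,n}(X)$ is again infinitely factorizable. Here I would invoke the equivalence (i) $\Leftrightarrow$ (iii) of Theorem \ref{thm:61} to write $X = \prod_{k=0}^{\infty} \msrA_k$ as a countable product of nonempty finite sets $\msrA_k \subseteq \msrA$ with the product topology. The key computation is that the decimation operation simply selects the arithmetic progression of coordinate factors: directly from Definition \ref{decimationdef} one has
\begin{equation*}
\psi_{j,n}(X) = \prod_{i=0}^{\infty} \msrA_{j+in}.
\end{equation*}
I would verify this by a double inclusion. Any $\psi_{j,n}(\bx)$ with $\bx \in X$ has $i$-th symbol $x_{j+in} \in \msrA_{j+in}$, giving the inclusion $\subseteq$. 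Conversely, given $\by = y_0 y_1 \cdots$ with $y_i \in \msrA_{j+in}$ for all $i$, I can build a preimage $\bx \in X$ by setting $x_{j+in} = y_i$ and choosing $x_k$ to be an arbitrary element of $\msrA_k$ for every remaining index $k$; such a choice exists because each $\msrA_k$ is nonempty, and then $\psi_{j,n}(\bx) = \by$. The nonemptiness of the factors, guaranteed by Theorem \ref{thm:61}(iii), is exactly what makes the reverse inclusion work, and is the only point requiring care.

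Since the index set $\{\, j + in : i \ge 0 \,\}$ is infinite, $\psi_{j,n}(X)$ is thereby exhibited as a countable product of nonempty finite subsets of $\msrA$, which is a closed set in the product topology (this closedness also follows from Theorem \ref{thm:inclusion}(3)). Applying the implication (iii) $\Rightarrow$ (i) of Theorem \ref{thm:61} to this product then shows that $\psi_{j,n}(X)$ is infinitely factorizable, completing the proof. I do not anticipate a genuine obstacle: once the product characterization is in hand, the content is the one-line observation that decimation of a coordinate-independent product is again such a product, and the remainder is bookkeeping with the cited theorems.
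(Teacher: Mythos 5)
Your proposal is correct and follows essentially the same route as the paper: both parts rest on Theorem \ref{thm:61}, using (i)$\Leftrightarrow$(ii) for factorizability at every $n$ and the product characterization (iii) to see that decimations remain infinitely factorizable. The only difference is that the paper states ``property (iii) is preserved under decimations'' as a one-line observation, whereas you spell it out via the identity $\psi_{j,n}\bigl(\prod_{k=0}^{\infty}\msrA_k\bigr)=\prod_{i=0}^{\infty}\msrA_{j+in}$, correctly noting that nonemptiness of the $\msrA_k$ is what makes the reverse inclusion work.
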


\begin{proof} 
By property (ii) of Theorem \ref{thm:61},  $X$ is factorizable for each $n \ge 1$,
and its  $n$-fold factors are $\psi_{j, n}(X)$ for $0 \le j \le n-1$.
Now the property (iii) is preserved under decimations of all orders, hence all $\psi_{j, n}(X)$
must be infinitely factorizable.
\end{proof}

For an  infinitely factorizable $X$ it is possible that  all decimations $\psi_{j,n}(X)$  are pairwise distinct.
In  such cases  the factor set $\fF(X)$ would be   infinite.


\begin{cor}\label{cor:63}
The set  $\sY( \mathcal{A})$
of all infinitely factorizable closed subsets   $X \subseteq \mathcal{A}^{\NN}$ is closed under the operation of 
$n$-fold interleaving for all $n \ge 1$. That is, if $X_0, X_1, \cdots , X_{n-1} \in \sY(\mathcal{A})$, then
$$
(\pr)_{i=0}^{n-1} X_i = X_0 \pr X_1 \pr \cdots \pr X_{n-1} \in \sY(\mathcal{A}).
$$
\end{cor}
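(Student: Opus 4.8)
The plan is to reduce the entire statement to the structural description of infinite factorizability provided by Theorem \ref{thm:61}, in particular the equivalence of infinite factorizability with its property (iii): a closed set $X \subseteq \sA^{\NN}$ is infinitely factorizable if and only if it has the form of a countable direct product $X = \prod_{k=0}^{\infty} \sA_k$ with each $\sA_k$ a nonempty subset of $\sA$, taken with the product topology. This product form is the right handle because it interacts transparently with the coordinatewise definition of interleaving, reducing the corollary to a bookkeeping check.

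First I would invoke the implication (i) $\Rightarrow$ (iii) of Theorem \ref{thm:61} for each factor, writing $X_i = \prod_{k=0}^{\infty} \sA_k^{(i)}$ for $0 \le i \le n-1$, where each $\sA_k^{(i)} \subseteq \sA$ is nonempty. Next I would unwind the definition of the $n$-fold interleaving: by Definition \ref{interleavingdef}, a sequence $(x_t)_{t=0}^{\infty}$ lies in $X_0 \pr \cdots \pr X_{n-1}$ exactly when $\psi_{j,n}(x) = x_j x_{j+n} x_{j+2n} \cdots \in X_j$ for each $0 \le j \le n-1$. Since $X_j$ is a product set, the membership $\psi_{j,n}(x) \in X_j$ is equivalent to the coordinatewise condition that the $k$-th entry $x_{j+nk}$ of $\psi_{j,n}(x)$ lie in $\sA_k^{(j)}$ for every $k \ge 0$.

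The key step is then to repackage all of these conditions under a single index. Writing each $t \ge 0$ uniquely as $t = nk + j$ with $0 \le j \le n-1$ and $k \ge 0$, and defining $\sB_t := \sA_k^{(j)}$, the combined constraints say precisely that $x_t \in \sB_t$ for all $t$. Hence $X_0 \pr \cdots \pr X_{n-1} = \prod_{t=0}^{\infty} \sB_t$ is again a countable direct product of nonempty subsets of $\sA$. Since each $\sA_k^{(j)}$ is nonempty, so is each $\sB_t$, and the resulting product is closed in the product topology (equivalently, the interleaving of closed sets is closed, as noted in Section \ref{subsec:12}). Applying (iii) $\Rightarrow$ (i) of Theorem \ref{thm:61} then shows the interleaving is infinitely factorizable, so it lies in $\sY(\sA)$.

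I do not anticipate a genuine obstacle; the only point requiring care is the index translation $t = nk + j$ and the verification that interleaving of product sets yields a product set coordinate by coordinate, which is exactly where the coordinatewise form of property (iii) does all the work. An alternative and essentially equivalent route would bypass (iii) and instead combine Corollary \ref{cor:62} with the divisibility and least-common-multiple structure of Theorem \ref{thm:lcm-factorization} to produce factorizations of the interleaving at arbitrarily large levels; but the product-set argument via Theorem \ref{thm:61}(iii) is cleaner and avoids tracking $n$-dependence of the factors.
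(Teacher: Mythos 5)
Your proposal is correct and follows essentially the same route as the paper: the paper's proof also reduces to property (iii) of Theorem \ref{thm:61} and observes that the countable-product form is inherited under $n$-fold interleaving, which is exactly your index-translation $t = nk + j$ made explicit. Your write-up simply supplies the bookkeeping the paper leaves implicit.
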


\begin{proof} 
This fact follows using the characterization of infinitely factorizable by property (iii) of  Theorem \ref{thm:61}.
This property is inherited under $n$-fold interleaving of sets $X_i$ that have it.
 \end{proof}
Combining Theorem \ref{thm:lcm-factorization} and Theorem \ref{thm:61}   yields the following result.

\begin{thm} \label{thm:classification} (\cite[Theorem 2.10]{ALS21}) 
{\rm (Dichotomy theorem)}
Let $X$ be a closed subset of $\sA^{\NN}$. Then exactly one of the following holds for $X$.

(1) (Infinitely factorizable) The set of $n$ where $X$ has an $n$-interleaving factorization is the set of all positive integers $\NN^{+}$.

(2) (Finitely factorizable) The set of $n$ where $X$ has an $n$-interleaving factorization is the set of all divisors of
some  integer $f = f(X)$.
\end{thm}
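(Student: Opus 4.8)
The plan is to work entirely at the level of the set
$$
S(X) := \{\, n \ge 1 : X \text{ has an } n\text{-fold interleaving factorization}\,\}
$$
and to show that Theorems \ref{thm:lcm-factorization} and \ref{thm:61} force this set into exactly one of the two stated shapes. First I would record two structural facts about $S(X)$. It is never empty: the $1$-fold interleaving is the identity, so $X = X$ exhibits a $1$-fold factorization and $1 \in S(X)$. By Theorem \ref{thm:lcm-factorization}(1) the set $S(X)$ is closed under taking divisors, and by Theorem \ref{thm:lcm-factorization}(2) it is closed under pairwise least common multiples, hence, by an easy induction, under the least common multiple of any finite subset of $S(X)$.

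Next I would split on whether $S(X)$ is finite or infinite. If $S(X)$ is infinite, then $X$ is infinitely factorizable in the sense of Theorem \ref{thm:61}(i); since $X$ is assumed closed, the equivalence (i) $\Leftrightarrow$ (ii) of that theorem upgrades this to $n$-factorizability for \emph{every} $n$, i.e. $S(X) = \NN^{+}$. This is alternative (1).

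If instead $S(X)$ is finite, I would set $f = f(X) := \lcm\bigl(S(X)\bigr)$, the least common multiple of its finitely many elements. Closure of $S(X)$ under finite least common multiples gives $f \in S(X)$. I then claim $S(X)$ is precisely the set of divisors of $f$: every $n \in S(X)$ divides $f$ by the definition of $f$, and conversely every divisor $d$ of $f$ lies in $S(X)$ because $f \in S(X)$ and $S(X)$ is divisor-closed. This is alternative (2). The two alternatives are mutually exclusive, since the divisor set of $f$ is finite whereas $\NN^{+}$ is infinite, so exactly one of (1), (2) holds.

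The argument is almost entirely bookkeeping once the two cited theorems are in hand, so I do not expect a serious obstacle. The one point requiring care is the passage from the pairwise lcm-closure provided by Theorem \ref{thm:lcm-factorization}(2) to the membership $f \in S(X)$: this needs the finiteness of $S(X)$ to make $f$ well defined, together with the induction iterating the pairwise operation. This is exactly where the hypothesis that we are in the finite case (rather than the infinite one, handled directly by Theorem \ref{thm:61}) enters essentially.
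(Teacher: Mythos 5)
Your proposal is correct and takes essentially the same route as the paper: the paper derives this theorem simply by combining Theorem \ref{thm:lcm-factorization} (divisor- and lcm-closure of the set of factorization levels) with Theorem \ref{thm:61} (for closed sets, infinitely factorizable implies factorizable for all $n$), which is exactly the combination you carry out. Your write-up merely makes explicit the lcm/divisor bookkeeping that the paper leaves implicit.
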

%
%
\subsection{Infinitely factorizable  path sets}\label{subsec:56}

We deduce that the collection $\sC^{\infty}(\sA)$ of all infinitely factorizable path sets on $\sA$  is closed under all interleaving operations. 


\begin{thm}\label{thm:infinite_factorizable_clsd} 
{\em ($\sC^{\infty} (\sA)$ is closed under interleaving)}
    
       (1) If $\sP$ is a path set on the alphabet $\sA$ having an $n$-fold interleaving factorization
for all $n \ge 1$, then each interleaving factor $\psi_{j,n}(\sP)$ is itself infinitely factorizable.

      (2) Conversely,  if the $n$ path sets  $\{ \sP_i : \, 0 \le i \le n-1\}$ are each infinitely factorizable then
the $n$-fold interleaving  $\sP := \sP_0 \pr \sP_1 \pr \cdots \pr \sP_{n-1}$ is 
infinitely factorizable. 
\end{thm}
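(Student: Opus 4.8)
The plan is to reduce both parts to the corresponding statements for general closed sets, which are already available as Corollaries \ref{cor:62} and \ref{cor:63}, and then to invoke the path-set closure properties to upgrade ``closed set'' to ``path set.'' The key structural observation is that every path set is a closed subset of $\sA^{\NN}$ by Theorem \ref{thm:operations}(1), so the entire machinery developed in Section \ref{subsec:55} for closed sets applies verbatim to any path set $\sP$.

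For part (1), I would start with an infinitely factorizable path set $\sP$. Since $\sP$ is closed, Corollary \ref{cor:62} applies directly and tells me that $\sP$ is factorizable for every $n \ge 1$, and that each principal decimation $\psi_{j,n}(\sP)$ (for $0 \le j \le n-1$) is again infinitely factorizable as a closed set. By Theorem \ref{thm:DIF2}, these decimations are exactly the $n$-fold interleaving factors of $\sP$, so the interleaving factors are precisely the $\psi_{j,n}(\sP)$. It remains only to note that each $\psi_{j,n}(\sP)$ is a \emph{path set}: this is exactly Theorem \ref{thm:decimation}, the closure of $\sC(\sA)$ under decimation. Combining these two facts gives that each factor is an infinitely factorizable path set, i.e.\ lies in $\sC^{\infty}(\sA)$.

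For part (2), I would take infinitely factorizable path sets $\sP_0, \ldots, \sP_{n-1}$ and form their $n$-fold interleaving $\sP = \sP_0 \pr \sP_1 \pr \cdots \pr \sP_{n-1}$. Since each $\sP_i$ is a closed set, Corollary \ref{cor:63} shows that $\sP$ is infinitely factorizable as a closed set (the product characterization (iii) of Theorem \ref{thm:61} is inherited under interleaving). To conclude that $\sP \in \sC^{\infty}(\sA)$ rather than merely being an infinitely factorizable closed set, I would apply Theorem \ref{thm:intalgthm2}, which guarantees that the $n$-fold interleaving of path sets is again a path set. This places $\sP$ in $\sC^{\infty}(\sA)$, completing the proof.

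The proof is essentially an assembly of earlier results, so there is no substantive technical obstacle; the only point requiring care is the bookkeeping observation that path sets are closed, which licenses the use of Corollaries \ref{cor:62} and \ref{cor:63}. The genuinely difficult content—the product characterization of infinite factorizability and its stability under decimation and interleaving—has already been absorbed into those corollaries via Theorem \ref{thm:61}, so here I would simply make explicit that both operations keep us inside $\sC(\sA)$ as well as inside the class of infinitely factorizable closed sets.
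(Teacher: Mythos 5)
Your proposal is correct and follows essentially the same route as the paper's own proof: part (1) from Corollary \ref{cor:62} plus closure of $\sC(\sA)$ under decimation (Theorem \ref{thm:decimation}), and part (2) from Corollary \ref{cor:63} plus closure of $\sC(\sA)$ under interleaving (Theorem \ref{thm:intalgthm2}). The extra steps you make explicit — that path sets are closed (Theorem \ref{thm:operations}) and that interleaving factors coincide with principal decimations (Theorem \ref{thm:DIF2}) — are implicit in the paper's two-sentence argument, so there is no substantive difference.
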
 

\begin{proof}
Statement (1) 
of Theorem \ref{thm:infinite_factorizable_clsd}  follows from  Corollary  \ref{cor:62} combining it  with the fact that all interleaving factors  $\psi_{j,m}(\sP)$ are path sets.
Statement  (2) follows  
from Corollary \ref{cor:63}, combining it  with  Theorem \ref{thm:intalgthm} to infer that $\sP$ is a path set.
\end{proof}

%
%
\subsection{Size of minimal right-resolving presentations for interleaving factors of path sets }\label{subsec:57N}

We now suppose that a path set $\sP$ has an $n$-fold interleaving factorization. 
The following bound on the size of minimal right-resolving presentations of
interleaving factors (which are necessarily principal decimations)  improves  on the upper  bound of   Theorem \ref{thm:decimation_alg} for general $n$-level decimations.

\begin{thm}\label{thm:56}
{\rm ( Upper bound on minimal presentation size  of $n$-fold interleaving factors ) } 
Let   $\sP$ be a path set  having $m$ vertices in its minimal right-resolving presentation. 
Suppose that $\sP$ has   an  $n$-fold interleaving factorization
$\sP = (\pr)_{j=0}^{n-1} \sP_j$. Then  each $n$-fold interleaving factor $\sP_j = \psi_{j,n}(\sP)$
has a minimal right-resolving presentation  having
at most $m$  vertices.
\end{thm}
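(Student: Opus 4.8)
The plan is to reduce the statement to a counting problem and apply Theorem~\ref{thm:minimal_presentation}(4), which identifies the number of vertices in a minimal right-resolving presentation of a path set with the number of its distinct nonempty word path sets $\sP^w$ (in the sense of Definition~\ref{def:word_vertex_path_set}). Since each factor $\sP_j = \psi_{j,n}(\sP)$ is a path set by Theorem~\ref{thm:factorthm2}, it has a minimal right-resolving presentation, and it suffices to prove that $\sP_j$ has at most $m$ distinct nonempty word path sets, where $m$ is the number of distinct nonempty word path sets of $\sP$.

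The engine of the argument is a factorization of the word path sets of $\sP$ along words whose length is a multiple of $n$. First I would verify that for any initial word $w$ of $\sP$ with $|w| \equiv 0 \pmod n$, the word path set $\sP^w$ is itself $n$-factorizable, namely
\[
\sP^w = (\sP_0)^{\psi_{0,n}(w)} \pr (\sP_1)^{\psi_{1,n}(w)} \pr \cdots \pr (\sP_{n-1})^{\psi_{n-1,n}(w)}.
\]
This comes from tracking decimation offsets through the concatenation $wy$: because $|w|$ is divisible by $n$, each position of $y$ contributing to the $i$-th decimation of $wy$ is exactly a position of the $i$-th decimation of $y$, so the factorization $\sP = \sP_0 \pr \cdots \pr \sP_{n-1}$ gives $wy \in \sP$ if and only if $\psi_{i,n}(w)\,\psi_{i,n}(y) \in \sP_i$ for all $0 \le i \le n-1$. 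Applying the right-inverse identity \eqref{eqn:basic-relation2} to this factorization then yields $\psi_{j,n}(\sP^w) = (\sP_j)^{\psi_{j,n}(w)}$.

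Next I would show every nonempty word path set of $\sP_j$ is obtained this way. Given an initial word $u = u_0 \cdots u_{k-1}$ of $\sP_j = \psi_{j,n}(\sP)$, I would pick $x \in \sP$ whose $j$-th decimation has prefix $u$ and let $w$ be the length-$kn$ prefix of $x$; then $\psi_{j,n}(w) = u$ and $|w| \equiv 0 \pmod n$, so $(\sP_j)^u = \psi_{j,n}(\sP^w)$ by the previous step. Hence the assignment $w \mapsto \psi_{j,n}(\sP^w)$, as $w$ ranges over initial words of $\sP$ with $|w| \equiv 0 \pmod n$, surjects onto the set of all nonempty word path sets of $\sP_j$. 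Since the number of distinct sets $\sP^w$ over all initial words $w$ is exactly $m$ (Theorem~\ref{thm:minimal_presentation}(4)), restricting the domain and then composing with $\psi_{j,n}$ can only decrease cardinality, so $\sP_j$ has at most $m$ distinct nonempty word path sets, completing the proof. The main obstacle is the bookkeeping in the factorization step: one must check carefully that the divisibility $n \mid |w|$ is precisely what makes the $y$-contributions to each decimation offset of $wy$ align with the decimations of $y$, since for general $|w|$ these offsets are cyclically permuted and the clean factorization of $\sP^w$ breaks down.
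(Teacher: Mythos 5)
Your proof is correct and follows essentially the same route as the paper's: both arguments reduce the bound to counting distinct word path sets via Theorem~\ref{thm:minimal_presentation}(4) and show that every word path set $(\sP_j)^u$ of a factor is a decimation of some word path set $\sP^{w}$ of $\sP$. Your variant (cutting the prefix of $z$ at length $kn$ and applying $\psi_{j,n}$, organized through the intermediate interleaving factorization of $\sP^w$) differs from the paper's (cutting at length $nk+j$ and applying $\psi_{0,n}$, proved by two direct inclusions) only in offset bookkeeping, a flexibility the paper itself points out in Remark~\ref{rem:55}.
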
 

\begin{proof}
According to  the equivalences in Theorem \ref{thm:minimal_presentation},  
It suffices to show that for each $j$, the number of distinct word path sets of the form $\sP_j^w$, where $w$ is an initial word of $\sP_j$, is no larger than the number of distinct 
word path sets 
of the form $\sP^{w'}$, where $w'$ is an initial word of $\sP$. 
For an initial word $w=w_0w_1\ldots w_{k-1}$ of $\sP_j$, let $z\in\sP$, and say $z=z^0\pr z^1\pr\ldots\pr z^{n-1}$, 
with $w$ the initial $k$-length word of $z^{j}$. Let $w'$ be the initial word of $z$ of length $nk+j$. Then the letter in $z$ immediately following $w'$ is from $z^j$.

We assert  that  $\psi_{0,n}(\sP^{w'})=\sP_j^w$. We show both inclusions hold. 
The set $\sP^{w'}$ is the set of all infinite words $x$ such that $w'x\in\sP$. Note that if $w'x\in\sP$, then $\psi_{j,n}(w'x)=wy$, 
where $y=\psi_{0,n}(x)$. Thus, the $(0,n)$ decimation of any infinite word following $w'$ in $\sP$ is an infinite word following 
$w$ in $\psi_{j,n}(\sP)=\sP_j$, so we have the inclusion  $\psi_{0,n}(\sP^{w'})\subseteq\sP_j^w$. 
For the other inclusion, note that for any $y\in\sP_j^w$, 
we have $wy\in\sP_j$. Hence if we define $\tilde{z}=z^0\pr \ldots \pr z^{j-1}\pr (wy) \pr z^{j+1}\pr\ldots \pr z^{n-1}$, then $\tilde{z}\in \sP$, and the initial word of 
$\tilde{z}$ of length $nk+j$ is $w'$, since the choice of $y$ does not affect the first $nk+j$ letters of $\tilde{z}$. 
Thus $\tilde{z}=w'x$ for some $x\in\sP^{w'}$, and $\psi_{0,n}(x)=y$. Hence $y\in\psi_{0,n}(\sP^{w'})$, and we get  
$\sP_j^w\subseteq\psi_{0,n}(\sP^{w'})$, proving the assertion.

Thus, every path set $\sP_j^w$ is the $(0,n)$-decimation of a path set $\sP^{w'}$. It follows that there are at least as many distinct path 
sets of the form $\sP^{w'}$ as there are of the form $\sP_j^{w}$. 
\end{proof}

\begin{rem}\label{rem:55} 
 There is nothing special about the use of $(0,n)$-decimations in this proof. If, after choosing $j$, $w$, and $z$, we had chosen the word $w'$ to be of length $nk+j-i$, for any $0\leq i\leq n-1$, then the letter in $z$ occurring $(i+1)$ steps after the last letter of $w'$ would be from $z^j$, and we could have shown that every path set $\sP^w$ is the $(i,n)$-decimation of a path set $\sP^{w'}$.  
\end{rem}


%
%
\section{Structure of Infinitely Factorizable Path Sets} \label{sec:6}

This section classifies all infinitely factorizable path sets, in terms of the structure of
 their minimal right-resolving presentation. 
 It deduces an improved upper bound on
 the size of minimal right-resolving presentations of interleaving factors of a general path set
 (which are decimations)  than that derived for decimations in Theorem \ref{thm:decimation_alg2}.

 %
%
\subsection{Characterization of infinitely factorizable path sets} \label{subsec:62}

We characterize the path sets $\sP$ that are infinitely factorizable 
as having 
 a minimal  right-resolving presentation $(\sG, v)$
of a particularly simple kind.

\begin{defn}\label{defn:leveled} 
(Leveled presentation) 
 A   presentation 
 $(\sG, v)$ of a path set $\sP$ is {\em leveled} 
if  it is right-resolving and all infinite paths in  $\sG$ from the marked vertex $v$ visit exactly the same set of vertices in the same order; i.e., all
exit edges of $\sG$  from a vertex $v'$ necessarily go the same target vertex $v^{''}$ (depending on $v'$).
There  may  be multiple edges (with different symbol labels) between $v'$ and $v''$. 
We  say that a path set $\sP$ is {\em leveled } if it has such a  presentation; otherwise it is {\em non-leveled}. 
 \end{defn}

 Figure 7.1  gives an example of a leveled presentation.

\begin{figure}[ht]\label{fig61}
	\centering
	\psset{unit=1pt}
	\begin{pspicture}(-150,30)(100,135)
		\newcommand{\noden}[2]{\node{#1}{#2}{n}}
		\noden{$v_0$}{-140,90}
		\noden{$v_6$}{0,125}
		\noden{$v_7$}{-60,105}
		\noden{$v_2$}{-80,45}
		\noden{$v_1$}{-140,45}
		\noden{$v_5$}{60,105}
		\noden{$v_4$}{80,45}
		\noden{$v_3$}{0,45}
		\bline{n$v_5$}{n$v_6$}{1}
		\eeline{n$v_6$}{n$v_7$}{3}{0}{1}
		\bline{n$v_7$}{n$v_2$}{3}
		\bline{n$v_2$}{n$v_3$}{1}
		\bline{n$v_4$}{n$v_5$}{3}
		\ddline{n$v_1$}{n$v_2$}{0}{2}
		\ddline{n$v_3$}{n$v_4$}{1}{2}
		\bline{n$v_0$}{n$v_1$}{1}
	\end{pspicture}
\newline
\hskip 0.5in {\rm Figure 7.1} Leveled presentation  of a   path set $\sP$. The marked vertex is $v_0$.
\newline
\end{figure}


\begin{thm}\label{thm:63}
 A path set $\sP$ is infinitely factorizable if and only if it has a minimal right-resolving presentation $(\sG, v_0)$ that
 is leveled. 
 \end{thm}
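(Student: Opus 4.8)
The plan is to route both sides of the equivalence through a single structural condition: that $\sP$ is a \emph{full product set}, i.e. $\sP = \prod_{k=0}^{\infty} \sA_k$ for some nonempty subsets $\sA_k \subseteq \sA$ (the set of all sequences $x_0 x_1 x_2 \cdots$ with $x_k \in \sA_k$ for every $k$). The equivalence of infinite factorizability with being such a product is exactly the content of Theorem~\ref{thm:61}, parts (i) $\Leftrightarrow$ (iii). So it suffices to prove: $\sP$ has a leveled minimal right-resolving presentation if and only if $\sP = \prod_{k=0}^{\infty} \sA_k$.

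First, the easy direction. Suppose the minimal right-resolving presentation $(\sG, v_0)$ of $\sP$ is leveled; by Theorem~\ref{thm:minimal_presentation} it is also pruned. By the leveled property all exit edges from each vertex lead to a common target, so starting from $v_0$ the sequence of vertices visited by any infinite path is forced: there is a unique sequence $v_0 = w_0, w_1, w_2, \ldots$ in which every exit edge from $w_k$ lands on $w_{k+1}$. Let $\sA_k \subseteq \sA$ be the set of labels on edges from $w_k$ to $w_{k+1}$; it is nonempty because the presentation is pruned and its elements are distinct because the presentation is right-resolving. An infinite walk from $v_0$ is then the same data as a choice, for each $k$, of one label $x_k \in \sA_k$, with associated symbol sequence $x_0 x_1 x_2 \cdots$. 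Hence $\sP = \prod_{k=0}^{\infty} \sA_k$, and Theorem~\ref{thm:61} shows $\sP$ is infinitely factorizable.

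Conversely, suppose $\sP$ is infinitely factorizable. By Theorem~\ref{thm:61} we may write $\sP = \prod_{k=0}^{\infty} \sA_k$ with each $\sA_k$ nonempty. For an initial word $w$ of $\sP$ of length $k$, the word path set is $\sP^{w} = \prod_{j \ge k} \sA_j$, which depends only on the tail $(\sA_k, \sA_{k+1}, \ldots)$ and not on the particular letters of $w$. Since two full product sets of nonempty factors coincide if and only if their factor sequences coincide (recover each $\sA_j$ as a coordinate projection), distinct tails give distinct word path sets. Because $\sP$ is a path set it has only finitely many distinct word path sets (Theorem~\ref{thm:minimal_presentation}), so the sequence of tails, hence the sequence $(\sA_k)_{k\ge 0}$ itself, is eventually periodic and assumes only finitely many tail-values. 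I then build a presentation whose vertices are the distinct tails $T_k := (\sA_k, \sA_{k+1}, \ldots)$, with marked vertex $T_0$, drawing from $T_k$, for each $a \in \sA_k$, an edge labeled $a$ to $T_{k+1}$ (well defined since $T_{k+1}$ is the shift of $T_k$). All edges out of $T_k$ share the target $T_{k+1}$, so this presentation is leveled; it is right-resolving, pruned, and reachable by construction, and a routine check shows it presents $\prod_k \sA_k = \sP$. Its number of vertices equals the number of distinct tails, which equals the number of distinct word path sets $\sP^w$, which by Theorem~\ref{thm:minimal_presentation}(4) is exactly the vertex count of a minimal right-resolving presentation; hence this leveled presentation is itself minimal. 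Finally, by uniqueness up to isomorphism of minimal right-resolving presentations (Theorem~\ref{thm:minimal_presentation}(1)) and the fact that leveledness is an isomorphism invariant, the minimal right-resolving presentation of $\sP$ is leveled.

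The main obstacle is the second (forward) direction, specifically verifying minimality of the constructed leveled presentation. This hinges on identifying its vertices with the distinct word path sets $\sP^w$ and on the elementary but essential fact that for full product sets of nonempty factors the factor sequence is recovered from the set by coordinate projections, so that counting distinct tails is the same as counting distinct word path sets. Once that bookkeeping is in place, Theorem~\ref{thm:minimal_presentation} converts the count into minimality and uniqueness into the conclusion about the minimal presentation.
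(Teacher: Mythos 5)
Your proof is correct, and it shares the paper's central bridge: both arguments pass through Theorem~\ref{thm:61}(iii), identifying infinitely factorizable closed sets with full products $\prod_{k=0}^{\infty}\sA_k$, and your easy direction (leveled minimal presentation $\Rightarrow$ product form) is essentially the paper's sufficiency argument. Where you genuinely diverge is the forward direction. The paper works \emph{intrinsically} on the given minimal right-resolving presentation: by induction along the forced vertex path it shows that all exit edges from each reached vertex must land on a single vertex, namely the unique vertex whose vertex path set is the tail $\prod_{k\ge j}\sA_k$, the uniqueness coming from follower-separation of minimal presentations (Theorem~\ref{thm:minimal_presentation}); termination follows from finiteness of the vertex set. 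You instead build an \emph{extrinsic} canonical model, the tail automaton whose states are the distinct tails $T_k=(\sA_k,\sA_{k+1},\ldots)$, check it is leveled, right-resolving, pruned, reachable and presents $\sP$, prove it is minimal by matching its vertex count to the number of distinct word path sets via Theorem~\ref{thm:minimal_presentation}(4), and then transport leveledness to the minimal presentation by uniqueness, Theorem~\ref{thm:minimal_presentation}(1). Your route trades the paper's induction for two clean observations --- a product of nonempty factors is determined by its factor sequence via coordinate projections, and a path set has only finitely many word path sets --- and it has the virtue of exhibiting the minimal right-resolving presentation explicitly as the tail automaton; the paper's route avoids the separate construction and counting step but must carry the follower-set bookkeeping through the induction. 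Both are complete, and the two proofs make the same identification of vertices of the minimal presentation with tails of the factor sequence, just in opposite directions.
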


\begin{proof}
To prove necessity, we must show  that if $\sP$ is infinitely factorizable, then its minimal right-resolving
 presentation is leveled. 
 We prove the 
 assertion by induction on the number of vertices reached from the inital vertex  in G,  
starting from the marked  vertex $v_0$.  
 Using Theorem \ref{thm:61},  condition (iii) for being infinitely factorizable says 
that 
$ \sP = \prod_{k=0}^{\infty} \mathcal{A}_k, $
where each $\mathcal{A}_k$   is a subset of the (finite) alphabet $\mathcal{A}$.
Each exit  edge from the marked  vertex $v_0$ of $(\sG,v_0)$  goes to a vertex $v'$ whose vertex path set $X(\sG, v')$  must be
$  \sP' = \prod_{k=1}^{\infty} \mathcal{A}_k. $
The (finite) vertex  follower set  $\mitF(\sG, v')$ is then  
$$
     \mitF(\sG, v') = \bigcup_{m=1}^{\infty} \, \prod_{i=1}^m \mathcal{A}_k.
$$
By Theorem \ref{thm:minimal_presentation}  all vertex  follower sets in $\sG$ are distinct.
Consequently there can be only one choice for $v''$,   and all exit edges from $v_0$ go to it.
If $v''=v_0$ we have a self-loop at $v$ and are done. Otherwise $v''$  is  a new
vertex; call it $v_1$. There may be multiple edges (with different labels) from $v_0$ to $v_1$;
 the labels are exactly the letters in $\sA_0$. 

The induction hypothesis on $v_j$ supposes that the 
vertex $v_j$ has associated vertex path set
$\sP_j = \prod_{k=j}^{\infty} \mathcal{A}_k.$ 
We next study exit edges  from $v_j$. They necessarily go to a vertex $v''$
whose vertex path $X(\sG, v^{"} )$ is
$  \sP_{j+1}^{'}= \prod_{i=2}^{\infty} \mathcal{A}_k$
and whose (finite) vertex follower set is 
$$
       \mitF(\sG, v'') = \bigcup_{m=2}^{\infty} \,  \prod_{i=1}^m \mathcal{A}_k.
$$
By uniqueness of a vertex in the minimal presentation having a  particular finite follower set, all exit edges
must go to the same vertex $v''$. If $v''$ is one of the previously found  vertices, we are done.
Otherwise we are at  a new vertex $v_{j+1}$. 
Since $\sP$ is a path set it has a presentation with finitely many
vertices so the process must terminate.  
The induction is complete, so $(\sG, v_0)$ is leveled. 

Suppose such the leveled presentation  $\sG$ has
$m$ vertices.  The  directed graph $\sG$  either has a unique vertex path that is an $m$-cycle
or else this graph has the appearance of a Greek letter $\rho$, with the  unique vertex path
having a preperiodic part $v_0 \to v_1 \cdots \to v_{s-1}$,  with $s$ vertices, followed by moving around a 
periodic part $v_{s} \to v_{s+1}, \ldots v_{s+p-1} \to v_{s}$, a period $p$ cycle, with $p= m -s$.  

To prove sufficiency, we must show that every path set $\sP$ having a leveled presentation $(\sG, v_0)$  is infinitely factorizable.
A leveled presentation is right-resolving, since the labels $\sA_k$ exiting from vertex $v_k$ are distinct. 
It is clear from the  internal structure of a leveled presentation (as a rho-graph)  that the associated path set $\sP=(\sG, v_0)$ necessarily has 
the form
$\sP = \prod_{k=0}^{\infty} \sA_k$
where for the first $m$ steps $\sA_k$ are the set of edge labels for vertices 
$v_0, v_1, \ldots , v_{m-1}$. After this point the edge labels repeat periodically
with a period $p=m-s$, where $\ell$ is the length of the  pre-period, having 
the equality of sets 
$\sA_{m+j} = \sA_{\ell +k}$ with 
$0 \le k \le p-1$ determined by the congruence $k \equiv  j (\bmod  p)$.
Since this presentation satisfies  Theorem \ref{thm:61}(iii), $\sP$ is 
infinitely factorizable.  Finally, the presentation is minimal using Theorem \ref{thm:minimal_presentation}, 
because all vertex path sets $X(\sP, v_i)$ fo $0 \le i \le m-1$
are distinct, whence all finite follower sets $\mitF(\sG, v)$ are distinct. 
\end{proof}

 %
%
\subsection{Bounds for the number of distinct factors of infinitely factorizable path sets} \label{subsec:63}

We upper bound the number of distinct interleaving factors $\fF(\sP)$ of a  infinitely factorizable  path set in terms of the size of
its minimal right-resolving presentation. Since all factors are decimations we  know by Theorem \ref{thm:decimation_set_bound}
that $\fF(\sP) \subseteq \ffD(\sP)$  is finite.

\begin{thm} \label{thm:monfactfinprop} 
 Suppose that $\sP$ is an infinitely factorizable path set that has a right-resolving presentation $(\sG, v)$
 with $m$ vertices.

 (1) Each possible distinct factor occurs in some $n$-fold factorization having $n \le 2m-1$.

(2) The  cardinality $|\fF(\sP)|$ of the factor set $\fF(\sP)$ is at most  $m^2$.
\end{thm}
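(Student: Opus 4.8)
The plan is to discard the given presentation and instead work with the minimal right-resolving presentation of $\sP$, which by Theorem \ref{thm:63} is \emph{leveled}. Reading the alphabet sequence off its $\rho$-shaped vertex path, exactly as in the proofs of Theorem \ref{thm:63} and Theorem \ref{thm:61}(iii), I would write $\sP=\prod_{k\ge 0}\sA_k$ with each $\emptyset\neq\sA_k\subseteq\sA$, where $(\sA_k)_{k\ge 0}$ is eventually periodic with minimal preperiod $s$ and minimal period $p$; minimality of the leveled presentation gives $s+p=m'$ for some $m'\le m$ (the given right-resolving presentation need not be leveled or minimal, but its size $m$ only dominates $m'$, so all bounds below in $m'$ imply the claimed ones in $m$). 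Since the coordinate sets are nonempty and independent, $\psi_{j,n}(\sP)=\prod_{i\ge 0}\sA_{j+in}$, and two such factors coincide precisely when the alphabet sequences $(\sA_{j+in})_{i}$ agree termwise. A first reduction records that $\sA_{k+t}=\sA_{k'+t}$ for all $t\ge 0$ whenever $k\equiv k'\!\pmod p$ and $k,k'\ge s$; consequently the starting index $j$ may always be replaced by a representative $j_0\in\{0,\dots,m'-1\}$ without changing the factor.

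The crux is a dichotomy proved by simply locating where the indices $j_0+in$ fall. If $j_0\ge s$ or $n\ge s$, then for every $i\ge 1$ one has $j_0+in\ge s$, so $\sA_{j_0+in}=\sA_{s+((j_0-s+i(n\bmod p))\bmod p)}$ while $\sA_{j_0}$ depends only on $j_0$; hence the whole factor $\psi_{j_0,n}(\sP)$ is determined by the single pair $\bigl(j_0,\,n\bmod p\bigr)$. The only factors escaping this are the ``transient'' ones with $j_0<s$ \emph{and} $n<s$, of which there are at most $s(s-1)$.

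For part (1), if a factor is presented as $\psi_{j,n}(\sP)$ with $n\le 2m-1$ it already occurs in the $n$-fold factorization (valid since $0\le j\le n-1$ and $\sP$ is $n$-factorizable by Theorem \ref{thm:61}). Otherwise $n\ge 2m>s$, so the factor is periodic-type and determined by $(j_0,r)$ with $r=n\bmod p$. I would then re-realize it as $\psi_{j_0,n'}(\sP)$, where $n'$ is the least integer with $n'\equiv r\pmod p$ and $n'\ge\max(s,\,j_0+1)$. This forces $0\le j_0\le n'-1$ and $n'\ge s$, so $\psi_{j_0,n'}(\sP)=\psi_{j,n}(\sP)$, while $n'\le \max(s,\,j_0+1)+p-1\le m'+p-1\le 2m'-1\le 2m-1$ (using $p\le m'$ and $j_0\le m'-1$).

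For part (2), I would count by the two buckets above: periodic-type factors are indexed by $(j_0,n\bmod p)\in\{0,\dots,m'-1\}\times\{0,\dots,p-1\}$, at most $m'p$ of them, and transient-type factors number at most $s(s-1)$. Thus $|\fF(\sP)|\le m'p+s(s-1)=sp+p^2+s^2-s$, and since $(m')^2-\bigl(sp+p^2+s^2-s\bigr)=s(p+1)\ge 0$, we get $|\fF(\sP)|\le (m')^2\le m^2$. I expect the main obstacle to be precisely the transient regime: while the index is still in the preperiod the decimated sequence depends on the full value of $n$, not merely on $n\bmod p$, so a naive ``reduce the step modulo $p$'' argument is invalid there. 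The dichotomy in the second paragraph is exactly what neutralizes this—once $n\ge s$ at most the zeroth term is transient, collapsing the dependence to $(j_0,n\bmod p)$—and the crude bound $s(s-1)$ on the finitely many genuinely transient factors, together with the arithmetic identity $m'p+s(s-1)\le (m')^2$, closes the estimate.
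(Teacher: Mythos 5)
Your proof is correct and follows essentially the same route as the paper's: both pass to the minimal right-resolving presentation, which Theorem \ref{thm:63} makes leveled, write $\sP=\prod_{k\ge 0}\sA_k$ with $(\sA_k)$ eventually periodic, and use that periodicity to reduce any factor $\psi_{j,n}(\sP)$ to one indexed by a bounded pair of parameters, counting those pairs for (2) and shifting the step $n$ by multiples of $p$ into a range above $j_0$ but at most $2m-1$ for (1). Your periodic/transient split with the arithmetic check $m'p+s(s-1)\le (m')^2$ is just a slightly more explicit (and more careful, regarding the preperiodic regime) version of the paper's direct count of pairs $(j',n')\in\{0,\dots,m-1\}^2$.
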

\begin{proof} 
It suffices to consider  the minimal right-resolving presentation $(\sG, v_0)$ of $\sP$,
which must be a leveled presentation, and which has at most $m$ vertices. Let $p$ be the period of the graph. 
There is a unique vertex path on $\sG$ starting from the initial vertex, which we consider to be the 0th vertex. 
Call the vertex reachable in 1 step from the 0th vertex the 1st vertex, and so on. Now $\mathcal{A}_k(\sP)$ is the set
 of symbols available at the $k$-th vertex, and by Theorem \ref{thm:61}, $\sP=\prod_{k=0}^{\infty}\mathcal{A}_k(\sP)$. 
 Then $\psi_{j,n}(\sP)=\prod_{k=0}^{\infty}\mathcal{A}_{kn+j}(\sP)$. Now, since there are only $m$ vertices (the 0th vertex
  through the $(m-1)$st vertex), we may choose $j'$ so that the $j'$th vertex is the $j$th vertex, 
  which also gives us that the $(j'+1)$st vertex is the $(j+1)$st vertex, and so on. Hence:

\[\psi_{j,n}(\sP)=\prod_{k=0}^{\infty}\mathcal{A}_{kn+j}(\sP)=\prod_{k=0}^{\infty}\mathcal{A}_{kn+j'}(\sP)=\psi_{j',n}(\sP)\]

Likewise, because there are only $m$ vertices, we may choose $n'<m$ so that the $j'+n'$th vertex is the $j'+n$th vertex. 
We wish to show that this will also imply that the
 $(j'+kn')$th vertex is the $(j'+kn)$th vertex for all $k$. If $n<m$, we may take $n'=n$. If $n\geq m$, then the $(j'+n)$th vertex is 
 in the periodic part of the graph, so the fact that the $(j'+n')$th vertex is the $(j'+n)$th vertex implies that $n'=n~(\bmod~p)$. 
 This in turn implies that the $(j'+kn)$th vertex is the $(j'+kn')$th vertex for all $k$. Hence we have:

\[\psi_{j,n}(\sP)=\prod_{k=0}^{\infty}\mathcal{A}_{kn+j'}(\sP)=\prod_{k=0}^{\infty}\mathcal{A}_{kn'+j'}(\sP)\]

Since $j'$ and $n'$ are both chosen between $0$ and $m-1$, there are at most $m^2$ distinct sets of this form, whence 
the cardinality of the factor set $\fF(\sP)$  is at most $m^2$, proving (2). 

However, we have not guaranteed that $j'<n'$, and so the set 
$\prod_{k=0}^{\infty}\mathcal{A}_{kn'+j'}(\sP)$ is not guaranteed to be one of the $n'$-fold interleaving factors of $\sP$. 
If $n<m$, then we have $j'\leq j<n=n'$, and so we are done. If $n\geq m$, then because the $(j'+n')$th vertex is in the periodic part of the graph,
 we may take $n''=n'+pr$ for any $r\geq 1$ and get that the $(j'+kn'')$th vertex is the $(j'+kn')$th vertex for all $k\geq0$. Since $j'<m$ and $p\leq m$,
It  is always possible to choose $r$ such that $j'<n''\leq 2m-1$. We then have:
\[\psi_{j,n}(\sP)=\prod_{k=0}^{\infty}\mathcal{A}_{kn'+j'}(\sP)=\prod_{k=0}^{\infty}\mathcal{A}_{kn''+j'}(\sP)=\psi_{j',n''}(\sP)\]
which proves (1).
\end{proof}

\begin{rem} \label{rem:68}
The bound $2m-1$ in Theorem \ref{thm:monfactfinprop}  is sharp. Consider a circular graph with $m$ vertices, where the available alphabets 
at all vertices are distinct. Mark one of the vertices. If $\sP$ is the path set presented, then it can be shown that $\psi_{m-1,2m-1}(\sP)$ 
(which is  the full shift over the alphabet available at the $(m-1)$st vertex) is not an $n$-fold interleaving factor for any $n<2m-1$.
\end{rem}

 %
%
\subsection{Minimal right-resolving presentations for interleaving factors-part 2 }\label{subsec:64}

Theorem \ref{thm:56} showed that any path set $\sP$ having a minimal right-resolving presentation with $m$ vertices  
that has an $n$-fold interleaving
factorization $\sP = (\pr)_{i=0}^{n-1} \sP_j$ , necessarily has every factor $\sP_j= \psi_{j,n}(\sP)$ has
 a minimal right-resolving presentation having 
$m$ or fewer vertices. We now show that the equality case implies all these sets are leveled.

\begin{thm} \label{thm:68}
Let  $\sP$ be a path set having a minimal right-resolving presentation with  $m$  vertices.
Suppose  that $\sP$ has an $n$-fold interleaving factorization $\sP = (\pr_n)_{i=0}^{n-1} \sP_j$
such that at least one  factor $\sP_j= \psi_{j,n}(\sP)$ has a minimal right-resolving
presentation with $m$ vertices.  
Then  the path set $\sP$ must be  leveled. 
and all of the factors $\sP_j$ for $0 \le j \le n-1$ are leveled. 
\end{thm}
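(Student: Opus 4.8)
The plan is to sharpen the counting behind Theorem \ref{thm:56} into an exact bijection, and then read off leveledness factor by factor. First I would recall from the proof of Theorem \ref{thm:56} that, writing $w_i := \psi_{i,n}(w)$ for the $i$-th decimation of a finite word $w$, every initial word $w$ of $\sP$ with $|w| \equiv j \pmod n$ satisfies $\psi_{0,n}(\sP^w) = \sP_j^{w_j}$, and that every word path set of $\sP_j$ arises in this way. Since a path set has exactly as many distinct word path sets as its minimal right-resolving presentation has vertices (Theorem \ref{thm:minimal_presentation}(4)), the hypothesis that $\sP_j$ also has $m$ vertices makes this assignment a surjection between two finite sets each of cardinality $m$, hence a bijection. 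Moreover the counting forces its domain $\{\sP^w : |w| \equiv j \pmod n\}$ to be the whole collection of word path sets of $\sP$, so I conclude that the decimation map $S \mapsto \psi_{0,n}(S)$ is injective on \emph{all} word path sets of $\sP$.

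Next I would record the interleaving structure of these word path sets. A direct computation from $\sP = \sP_0 \pr \cdots \pr \sP_{n-1}$ shows that for any initial word $w$, setting $r := |w| \bmod n$, one has $\sP^w = \sP_r^{w_r} \pr \sP_{(r+1)\bmod n}^{w_{r+1}} \pr \cdots \pr \sP_{(r+n-1)\bmod n}^{w_{r+n-1}}$, so that $\psi_{d,n}(\sP^w) = \sP_{(r+d)\bmod n}^{w_{r+d}}$ recovers the $d$-th factor. In particular $\psi_{0,n}(\sP^w)$ is the factor-$r$ coordinate, and the injectivity from the first paragraph says that this single coordinate determines the entire tuple $(\psi_{0,n}(\sP^w), \ldots, \psi_{n-1,n}(\sP^w))$, for every phase $r$.

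The key step is then to propagate this rigidity to each factor. Fix a factor $\sP_i$ and a vertex $V = \sP_i^u$ of its minimal presentation, and choose a phase $r \ne i$, which is possible since $n \ge 2$. I can realize $V$ as the factor-$i$ coordinate of some word path set $\sP^w$ with $|w| \equiv r \pmod n$ and $w_i = u$, because $\sP$ is an interleaving of nonempty path sets and so the factor-wise decimations of $w$ may be prescribed independently. Now append $n$ symbols $c_0 c_1 \cdots c_{n-1}$ to $w$, where $c_d$ extends the factor-$(r+d)$ coordinate; the word $w c_0 \cdots c_{n-1}$ again has phase $r$, and its factor-$r$ coordinate is $\sP_r^{w_r c_0}$, depending only on $c_0$. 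By injectivity the whole set $\sP^{w c_0 \cdots c_{n-1}}$ depends only on $c_0$, so, writing $i = (r+i')\bmod n$ with $1 \le i' \le n-1$, its factor-$i$ coordinate $\sP_i^{u c_{i'}}$ must be independent of the freely chosen symbol $c_{i'}$. Thus every exit edge of $V$ in $\sP_i$ leads to one and the same target, which is exactly the leveled condition at $V$. As $i$ and $V$ were arbitrary, all factors $\sP_0, \ldots, \sP_{n-1}$ are leveled.

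Finally, each leveled $\sP_i$ is infinitely factorizable by Theorem \ref{thm:63}, so by Corollary \ref{cor:63} their interleaving $\sP$ is infinitely factorizable, whence Theorem \ref{thm:63} gives that $\sP$ itself is leveled, completing the proof. The main obstacle I anticipate is the bookkeeping in the first paragraph: one must verify that the surjection extracted from Theorem \ref{thm:56} has \emph{all} word path sets of $\sP$ as its domain, so that injectivity is available at every phase $r$ rather than only at $r=j$. This is precisely what allows the phase-varying argument of the third paragraph to turn a single hypothesis about $\sP_j$ into leveledness of every factor.
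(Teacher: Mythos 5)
Your proof follows the same route as the paper's: your first paragraph is precisely the paper's key Claim (injectivity of $S \mapsto \psi_{0,n}(S)$ on word path sets, proved by the same surjection-between-two-$m$-element-sets counting), and your third and fourth paragraphs use it to force all transitions in each factor to a single target and then pass to $\sP$ itself via Theorem \ref{thm:63} and Corollary \ref{cor:63}. However, there is a genuine gap in the realization step of your third paragraph. The factor-wise decimations of an initial word $w$ of $\sP$ \emph{cannot} be prescribed independently: if $|w| = qn+r$ with $0 \le r < n$, then $w_l$ is forced to have length $q+1$ for $l < r$ and length $q$ for $l \ge r$. Consequently a phase-$r$ word $w$ with $w_i = u$ exists only when either $i > r$, or $i < r$ and $|u| \ge 1$. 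The case you cannot realize is $i=0$ with $u = \emptyset$: the only initial word with $w_0 = \emptyset$ is the empty word, whose phase is $0 = i$, so no admissible phase $r \ne i$ exists. Your argument therefore says nothing about the exit edges of the \emph{initial} vertex of $\sP_0$, and since Definition \ref{defn:leveled} demands the single-target property at every vertex, leveledness of the factor $\sP_0$ is not fully established.

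The gap is easily repaired with your own tools, and the repair is in essence what the paper does: compare words one appended symbol at a time rather than a full block of $n$. For any initial word $w$ of $\sP$ of phase $r$ and any admissible extension symbol $a$, the word $wa$ has phase $(r+1) \bmod n$ and $(wa)_{(r+1)\bmod n} = w_{(r+1)\bmod n}$, so $\psi_{0,n}(\sP^{wa}) = \sP_{(r+1)\bmod n}^{w_{(r+1)\bmod n}}$ is independent of $a$; your injectivity claim then gives $\sP^{wa} = \sP^{wb}$ for any two admissible symbols $a,b$, and applying $\psi_{n-1,n}$ (which extracts the factor-$r$ coordinate of a phase-$((r+1)\bmod n)$ word path set) yields $\sP_r^{w_r a} = \sP_r^{w_r b}$. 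Since every vertex $\sP_r^{u}$ of the minimal presentation of $\sP_r$ arises from a phase-$r$ word $w$ of length $|u|n + r$ with $w_r = u$ (a choice that works even for $u=\emptyset$, taking $w$ to be the length-$r$ prefix of any element of $\sP$), and since the admissible extension symbols of $w$ in $\sP$ are exactly the admissible extension symbols of $u$ in $\sP_r$, this gives the leveled condition at every vertex of every factor with no exceptional case; it is the role played in the paper's proof by the words $b^1,b^2$ of length $n(k-1)+j+1$ and the decimation $\psi_{n-1,n}$. With that substitution your argument is complete and coincides with the paper's.
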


\begin{proof}
Suppose $\sP=X(\Gr,v)$ and $\sP_i=X(\Gr_i,v_i)$ are minimal right-resolving presentations, where 
$\Gr$ and $\Gr_i$ both have $m$ vertices. 
We begin by showing that the hypotheses of the theorem imply the following:

\smallskip

{\bf Claim.}{\em ~All word path sets $\sP^w$ are determined by their $(0,n)$-decimations $\psi_{0,n}(\sP^w)$.}

\smallskip

Since $\Gr$ and $\Gr_i$ both have $m$ vertices with distinct follower sets, they both have $m$ distinct vertex path sets $X(\Gr, v')$ 
 and $X(\Gr_i, v_i^{'})$ that can be presented by choosing initial vertices. The proof of Theorem \ref{thm:56} established that
 every word path set $\sP_i^w$ is a set of the form $\psi_{0,n}(\sP^{w'})$, for another word $w'$.
  Our claim follows immediately from the fact that there are $m$ such distinct path sets, but also only $m$ distinct path sets $\sP^{w'}$. 

Now, we will show that $\sP_j$ is leveled, for all $0\leq j\leq n-1$, using only the key fact stated in our claim
 (in particular, we no longer need to use any special information about $\mathcal{P}_i$, and what follows works for $j=i$ as well as for $j\neq i$). 
 Let $w^{j,1}$ and $w^{j,2}$ be initial words of $\sP_j$, both of length $k$. 
Say they are the initial blocks of the infinite words $x^{j,1}$ and $x^{j,2}$, respectively. It will suffice to show 
the equality of word path sets $\sP_j^{w^{j,1}}=\sP_j^{w^{j,2}}$, 
since this equality is equivalent to the equality of the word follower sets  
$\mitF_{\sP_j} (w^{j,1})$ and $\mitF_{\sP_j} (w^{j,2})$ of $\mathcal{P}_j$,
using Theorem \ref{thm:initialblocks}, because these word follower sets are the initial
words $\sB^{I}( \sP_j^{w^{j,1}})$ and $\sB^{I}(\sP_j^{w^{j,2}})$, respectively. 
This equality of the word follower sets then implies the equality of the
vertex follower sets $\mitF(\sP_j, w^{j,1})$ and $\mitF(\sP_j, w^{j,2})$, which since the presentation of
$\sP_j$ is minimal right-resolving means that we arrived at the same vertex of $\sP_j$ following the
symbol paths for $w^{j,1}$ and $w^{j,2}$ from the initial vertex $v_j$ of  this presentation. The property of
being at the same vertex is exactly the desired leveling property of $\sP_j$. 

It remains to show that $\sP_j^{w^{j,1}}=\sP_j^{w^{j,2}}$.
Now for all $l\neq j$, let $x^l\in\sP_l$ be chosen arbitrarily. Then let  
$$
y=x^0\pr x^1\pr \ldots \pr x^{j,1}\pr x^{j+1}\pr  \ldots\pr x^{n-1} \,\, \mbox{and} \,\,
z=x^0\pr  x^1\pr \ldots \pr  x^{j,2}\pr x^{j+1}\pr\ldots\pr x^{n-1}.
$$
 Let $b^1$ and $b^2$ be the words made up of the first $(n(k-1)+j+1)$ entries of $y$ and $z$, respectively. 
 In particular, the last entry of $b^1$ is the last entry of $w^{j,1}$, and the last entry of $b^2$ is the last entry of $w^{j,2}$. 

We will show:

$$
\sP_j^{w^{j,1}}=\psi_{n-1,n}(\sP^{b^1})\,\, \mbox{and} \,\, \sP_j^{w^{j,2}}=\psi_{n-1,n}(\sP^{b^2}).
$$

by reasoning along similar lines as in the proof of Theorem \ref{thm:56}. Specifically, for the first equality, $\sP^{b^1}$ is the 
set of all infinite words $x$ such that $b^1x\in\sP$. and if $b^1x\in\sP$, then $\psi_{j,n}(b^1x)=w^{j,1}y$, where $y=\psi_{n-1,n}(x)$. 
(The reason that we have $(n-1,n)$-decimations  here instead of $(0,n)$-decimations is that the words $b^1$ and $b^2$ have length $n-1$
 less than $kn+j$, the length of the word used in the proof of Theorem \ref{thm:56}.) Thus, the $(n-1,n)$ decimation of any infinite word following
  $b^1$ in $\sP$ is an infinite word following $w^{j,1}$ in $\psi_{j,n}(\sP)=\sP_j$, and we have  $\psi_{n-1,n}(\sP^{w'})\subseteq\sP_j^{w^{j,1}}$. 
  On the other hand, for any $y\in\sP_j^{w^{j,1}}$, we have $w^{j,1}y\in\sP_j$. Hence 
  $\tilde{z}=x^0\pr \ldots \pr x^{j-1}\pr (w^{j,1}y) \pr x^{j+1}\pr\ldots \pr x^{n-1}\in \sP$, 
  and the initial word of $\tilde{z}$ of length $(n(k-1)+j+1)$ is $b^1$, 
  since the choice of $y$ does not affect the first $(n(k-1)+j+1)$ letters of $\tilde{z}$. 
  Thus $\tilde{z}=w'x$ for some $x\in\sP^{w'}$, and $\psi_{n-1,n}(x)=y$. Hence $y\in\psi_{n-1,n}(\sP^{w'})$, 
  and we get  $\sP_j^w\subseteq\psi_{n-1,n}(\sP^{w'})$, proving that $\sP_j^{w^{j,1}}=\psi_{n-1,n}(\sP^{b^1})$.
  By the same argument (replacing $b^1$ with $b^2$ and $w^{j,1}$ with $w^{j,2}$), we get $\sP_j^{w^{j,2}}=\psi_{n-1,n}(\sP^{b^2})$.

Thus, if we can show that $\mathcal{P}^{b^1}=\sP^{b^2}$, then we get the desired equality $\sP_j^{w^{j,1}}=\sP_j^{w^{j,2}}$.

 But our earler claim
tells us that $\sP^{b^1}$ and
 $\sP^{b^2}$ are determined by their $(0,n)$-decimations. Their $(0,n)$-decimations are determined by the choice of (the first $k-1$ letters of) $y^{j+1}$, which is the same for $b^1$ and $b^2$ by construction. 
 
 Thus, we have shown that all the interleaving factors of $\mathcal{P}$ are leveled, and so $\sP$ is leveled.
\end{proof}

%
%
\section{ Finitely Factorizable Path Sets} \label{sec:factorsec}

Finitely factorizable path sets coincide with non-leveled path sets, so they are
algorithmically recognizable. 

 %
%
\subsection{Bounds for number of distinct $n$-fold interleaving factorizations} \label{subsec:71}

\begin{thm} \label{thm:non_leveled_fact} 
Let $\mathcal{P}$ be a path set having a right-resolving presentation $(\sG, v)$
having $m$ vertices. If $\mathcal{P}$ is finitely factorizable,  i.e., non-leveled, and has an $n$-fold interleaving factorization,
then $n \le m-1$. 
\end{thm}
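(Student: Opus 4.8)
The plan is to reduce to the minimal right-resolving presentation of $\mathcal{P}$, which by Theorem~\ref{thm:minimal_presentation} is pruned, reachable, and follower-separated; if it has $m_0$ vertices then $m_0\le m$, so it suffices to prove $n\le m_0-1$. Since $\mathcal{P}$ is non-leveled, this presentation $(\mathcal{G},v)$ violates Definition~\ref{defn:leveled} (cf.\ Theorem~\ref{thm:63}), so some reachable vertex $v^\ast$ has two exit edges $v^\ast\xrightarrow{a}u_1$ and $v^\ast\xrightarrow{b}u_2$ with $u_1\ne u_2$ and $a\ne b$. I would fix a path from $v$ to $v^\ast$ and let $k$ be its length, with label word $\sigma$. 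Because $\mathcal{G}$ is follower-separated, $u_1$ and $u_2$ have distinct vertex follower sets, so by the sharp form of Proposition~\ref{thm:exercise} (the $m_0-1$ bound noted there) there is a shortest distinguishing word $z=z_1\cdots z_d$ with $1\le d\le m_0-1$, say $z\in \mitF(\mathcal{G},u_1)\setminus \mitF(\mathcal{G},u_2)$. By minimality the prefix $z_1\cdots z_{d-1}$ is readable from both $u_1$ and $u_2$, reaching vertices $p$ (from $u_1$) and $q$ (from $u_2$), and the symbol $c:=z_d$ labels an exit edge of $p$ but not of $q$.

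The heart of the argument is the claim that \emph{if $\mathcal{P}$ is $n$-factorizable then $n\mid d$}. Granting this, since $d\ge 1$ we obtain $n\le d\le m_0-1\le m-1$, which is the theorem. I would prove the claim by contradiction: suppose $n\nmid d$, so positions $k$ and $k+d$ lie in distinct residue classes $j_1:=k \bmod n$ and $j_2:=(k+d)\bmod n$. Using that $\mathcal{G}$ is pruned, build two words of $\mathcal{P}$ sharing the prefix $\sigma$: let $x$ follow $\sigma$, then $a$ (to $u_1$), then $z_1\cdots z_{d-1}$ (to $p$), then $c$, then any continuation; and let $x'$ follow $\sigma$, then $b$ (to $u_2$), then $z_1\cdots z_{d-1}$ (to $q$), then any continuation. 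Both lie in $\mathcal{P}$; they agree on positions $0,\dots,k-1$ and on positions $k+1,\dots,k+d-1$ (both carry $z_1,\dots,z_{d-1}$ there), and differ only at position $k$ (namely $a$ versus $b$) and at position $k+d$ (where $x$ carries $c$).

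Now recombine the decimations. By Theorem~\ref{thm:DIF2}, factorizability means $\mathcal{P}=\mathcal{P}^{[n]}$, so any word whose every decimation $\psi_{j,n}$ agrees with that of some element of $\mathcal{P}$ again lies in $\mathcal{P}$. Define $y$ by $\psi_{j_2,n}(y)=\psi_{j_2,n}(x)$ and $\psi_{j,n}(y)=\psi_{j,n}(x')$ for all $j\ne j_2$; this is consistent precisely because $j_1\ne j_2$, and it gives $\psi_{j,n}(y)\in\psi_{j,n}(\mathcal{P})$ for every $j$, hence $y\in\mathcal{P}^{[n]}=\mathcal{P}$. By construction $y$ starts with $\sigma$, has $y_k=b$ (class $j_1$, taken from $x'$), then $z_1\cdots z_{d-1}$ at positions $k+1,\dots,k+d-1$ (where $x$ and $x'$ agree), then $y_{k+d}=c$ (class $j_2$, taken from $x$). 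Since $\mathcal{G}$ is right-resolving the walk presenting $y$ is unique; reading $\sigma\, b\, z_1\cdots z_{d-1}$ forces it to the vertex $q$, which has no $c$-exit, so the walk cannot continue and $y\notin\mathcal{P}$ — contradicting $y\in\mathcal{P}$. Hence $n\mid d$.

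The hard part will be organizing this exchange cleanly: one must arrange that $x$ and $x'$ share \emph{both} the prefix $\sigma$ to the branch vertex and the intermediate block $z_1\cdots z_{d-1}$, so that the recombined word $y$ is pinned to the deterministic walk through $q$ no matter which residue classes the intermediate positions occupy; the hypothesis $n\nmid d$ enters exactly to make the two diagnostic positions $k$ and $k+d$ independently settable across distinct factors. The only external inputs are the existence of a branching vertex (non-leveledness, Definition~\ref{defn:leveled} with Theorem~\ref{thm:63}), the sharp distinguishing-word bound $d\le m_0-1$ (Proposition~\ref{thm:exercise}), and the recombination characterization of factorizations (Theorem~\ref{thm:DIF2}). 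Notably the distance $k$ to the branch never appears in the final estimate, so neither the ``first-branch'' position nor any forced-prefix structure is needed.
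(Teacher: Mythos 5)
Your proof is correct, but it follows a genuinely different route from the paper's. The paper never works inside a presentation of $\sP$ itself: it takes minimal right-resolving presentations of the factors $\sP_0,\ldots,\sP_{n-1}$, observes (via Corollary \ref{cor:63}) that at least one factor must be non-leveled and so has a branch vertex whose two targets have distinct follower sets, forms the interleaving graph product $(\sH,\bv^0)$ of Theorem \ref{thm:algthm2}, and exhibits $2n$ vertices of $\sH$ reachable from $\bv^0$ whose follower sets must include at least $n+1$ distinct ones; the counting is done by projecting follower sets of $\sH$ to the coordinate graphs and invoking a combinatorial lemma (Lemma \ref{matlem}) about pairs of matrices with constant, distinct skew-diagonals, after which Theorem \ref{thm:minimal_presentation} gives $m\ge n+1$. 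You instead argue intrinsically in the minimal right-resolving presentation of $\sP$: non-leveledness supplies a branch vertex whose targets $u_1,u_2$ are separated by a shortest word of length $d$ with $1\le d\le m_0-1$, and the closure characterization $\sP=\sP^{[n]}$ (Theorem \ref{thm:DIF2}) lets you splice the $j_2$-residue class of $x$ into $x'$ to force the divisibility $n\mid d$; right-resolvingness pins the recombined word to the unique walk ending at the vertex $q$ with no $c$-exit, giving the contradiction, and $n\le d\le m_0-1\le m-1$ follows. Your mechanism is more elementary (no product construction, no matrix lemma) and proves something stronger: every admissible $n$ divides the shortest separating length at every branch pair, so in fact $n\le \min d$, which can be far below $m-1$. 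What the paper's route buys in exchange is independence from the sharpened form of Proposition \ref{thm:exercise}: your final inequality genuinely needs the $m_0-1$ bound that the paper asserts only in a footnote (it is the standard partition-refinement fact, so nothing is wrong, but with only the length-$m_0$ version your argument would yield $n\le m_0$ rather than $n\le m_0-1$). The two delicate steps in your argument both check out: the prefix $z_1\cdots z_{d-1}$ lies in both follower sets precisely by minimality of $d$, and $j_1\ne j_2$ is exactly equivalent to $n\nmid d$, which is where the hypothesis enters.
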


\begin{proof}
To prove the bound, we will assume $\sP$ has an $n$-fold interleaving factorization, 
and show that a minimal right-resolving presentation of $\sP$ must have at least $n+1$ vertices. If so we must have $m \ge n+1$,
giving the result.

We are given the interleaving factorization 
$\sP = \sP_0 \pr \sP_1 \pr \cdots \pr \sP_{n-1}$.
We let $(\sG_i, v_i^0)$ for $0 \le i \le n-1$ be minimal right-resolving
presentations for each $\sP_i$. In particular, by Theorem \ref{thm:minimal_presentation},
each labeled graph $\sG_i$ has the property that
all  its vertices have distinct vertex follower sets.

One of the  $\sP_i$ must be  finitely factorizable. For if all of the $\sP_i$ were infinitely factorizable,
then by Corollary \ref{cor:63} their $n$-fold interleaving $\sP := \sP_0 \pr \sP_1 \pr \cdots \pr \sP_{n-1}$ would be infinitely factorizable,
a contradiction.  For definiteness suppose $\sP_{i_0}$ is finitely factorizable. Then $\sP_{i_0}$ is  non-leveled, by Proposition \ref{thm:63}. 
Because $\sP_{i_0}=X(\sG_{i_0},v_{i_0}^0)$ is non-leveled, the graph $\sG_{i_0}$ has a reachable vertex $w$ which 
has exit edges to two different vertices---call  them $w_1$ and $w_2$---which have  distinct vertex follower sets $\mitF(\sG_{i_0}, w_1)$ and $\mitF(\sG_{i_0}, w_2)$  by minimality of the presentation $X(\sG_{i_0},v_{i_0}^0)$.

We now study the $n$-fold interleaving graph product $(\mathcal{H}, \bv^0) := (\pr_n)_{i=0}^{n-1}(\sG_i,v_i^0)$ studied in Theorem \ref{thm:algthm}.
We will  show that $(\mathcal{H}, \bv^{0})$ has at least $n+1$ different vertex  follower sets, over all vertices reachable from $\bv^{0}$. 
If so, by Proposition \ref{onevertexforeachfollowerset} 
 $\sP = X(\mathcal{H}, \bv^{0})$ will have at least $n+1$ distinct  word follower sets, and Theorem \ref{thm:minimal_presentation} then
implies that the minimal right-resolving presentation $(\sG, v)$ of $\sP$ has at least $n+1$ vertices. Consequently $m \ge n+1$  and we are done.

Recall that the vertex set $V(\sH) = \bigcup_{i=1}^{n-1} \sV^{i}$, but that the graph $\sH$ need not be connected,
Our argument must establish that the $n+1$  vertex follower sets constructed are in the reachable component  of $\bv$.
We first consider a shortest directed path in $\sG_{i_0}$  from the initial vertex $v_{i_0}$  to the vertex $w$---call its length $k$---and denote it $v_{i_0}^0,v_{i_0}^1,v_{i_0}^2,\ldots,v_{i_0}^{k-1}$, with $v_{i_0}^{k-1}=w$.
  Then  let $v_i^0,v_i^1,v_i^2,\ldots$ denote an arbitrary vertex path from the initial vertex in $\mathcal{G}_i$.

We know that the initial vertex of $\mathcal{H}$ is $\bv^0=(v_0^0,v_1^0,\ldots,v_{n-1}^0) \in \sV^0$. 
By the construction in the proof of Theorem \ref{thm:algthm}, there is an edge from this vertex to the 
vertex $(v_1^0,v_2^0,\ldots,v_{n-1}^0,v_0^1) \in \sV^{1}$, from here to $(v_2^0,v_3^0,\ldots,v_{n-1}^0,v_0^1,v_1^1)\in \sV^2$,  eventually reaching after $kn+i_0$ steps the vertex

 $$
 \bv_0:= (v_{i_0}^{k-1} ,v_{i_0+1}^{k-1},\ldots v_{n-1}^{k-1},v_0^k,v_1^k,\ldots,v_{i_0-1}^k) \in \sV^{i_0} 
 $$
 where 
 $\sV^{i_0}= \sV_{i_0} \times \sV_{i_0+1} \times \cdots \times \sV_{n-1} \times \sV_0 \times \sV_1 \times \cdots \sV_{i_0-1}.$
 For the next step, we have two choices, moving $w \to w_{\ell}$ for $\ell=1,2$, and 
we can then reach, in sequence, the following $n$ vertices in $\sH$:

\begin{eqnarray*}\label{nvertices1}
&\bv_1(\ell) := (v_{i_0+1}^{k-1},\ldots v_{n-1}^{k-1},v_0^k,v_1^k,\ldots,v_{i_0-2}^k,v_{i_0-1}^k,w_{\ell}) \in \sV_{i_0+1}&
\\
&\bv_2(\ell) := (v_{i_0+2}^{k-1},\ldots v_{n-1}^{k-1},v_0^k,v_1^k,\ldots,v_{i_0-1}^k,w_{\ell},v_{i_0+1}^{k}) \in \sV_{i_0+2} &
\\
&.&\\
&\bv_n(\ell) := (w_{\ell},v_{i_0+1}^{k},\ldots v_{n-1}^{k},v_0^{k+1},v_1^{k+1},\ldots,v_{i_0-1}^{k+1})\,  \in \,\sV_{i_0} \quad \quad&
\end{eqnarray*}
The  last  $n-1$ of these steps will be  chosen to travel edges in $\mathcal{H}$ corresponding to identical edges for $\ell=1, 2$ in  the graphs $\sG_{i_0+j}$ for  $1 \le j  \ne n-1$.  
We have now  specified  a list  of $2n$  vertices in $\mathcal{H}$ that are reachable from its initial vertex $\bv^{0}$.

We will show  that among these $2n$ vertices there are at least $n+1$ distinct vertex follower sets in $\sH$. 
To tell vertex follower sets apart we use the following test.

\smallskip

{\bf Claim.} {\em Given two vertex follower sets $\sF( \sH, \by^1)$ and $\sF(\sH,\by^2)$ with vertices in $\sH$ in vertex subsets 
$\by^1\in \sV^{i_1}$ and $\by^2 \in \sV^{i_2}$, respectively,  a necessary condition for the vertex follower set equality 
\begin{equation}\label{eqn:follower-set-match}
\sF( \sH, \bv^1) = \sF(\sH,\bv^2)
\end{equation}
is that each of their projected vertices in  the individual graphs $\sG_i$ for $0 \le i \le n-1$
must have identical vertex  follower sets, 
\begin{equation}\label{eqn:follower-set-pre-match}
\sF( \sG_{i_1+ j} , \by^1(j) ) = \sF(\sG_{i_2+j} ,\by^2(j)), \quad \mbox{for} \quad 0 \le j \le n-1.
\end{equation} 
These conditions are equivalent, for each $j$,  to the path set equalities
\begin{equation}\label{eqn:path-set-pre-match}
X(\sG_{i_1+j}, \by^{1}(j)= \psi_{j, n}(X( \sH, \by^1) ) = \psi_{j, n}(X( \sH, \by^2))=X(\sG_{i_2+j}, \by^{2}(j)).
\end{equation}
}

To prove  the claim, note that following an infinite path  in the graph $(\sH, \bv^{0})$ is the same thing as  following   separate independent paths   in  each of the 
$n$ graphs $(\sG_i, v_i)$, starting from their initial vertices.
Write $\bv^j = (v^j(0), v_j(1), \cdots, v_j(n-1))$
 The   finite follower set equality \eqref{eqn:follower-set-match} implies the path set equality
$X( \sH, \by^1) = X(\sH, \bv^2).$  Note that $\psi_{j,n}(X( \sH, \by^{\ell} )) = X(\sG_{i_1+j}, \bv^{\ell}(j)),$  for all $j$.
The vertex follower sets of the projections
are  completely determined by the property that they are  initial languages of the path sets $\psi_{j,n}(X(\sH, \by^{\ell}))$.
Therefore the equality $\sF( \sG_{i_1+j} , v_i (1) ) = \sF (\sG_{i_2+j},v_i (2))$  holds for all $0 \le j \le n-1$, proving the claim.

Thus, for example, for the vertex follower set of vertex
 $$(v_{i_0+1}^{k-1},\ldots v_{n-1}^{k-1},v_0^k,v_1^k,\ldots,v_{i_0-1}^k,w_2) \in \sV^{1_0+1}$$
  to be equal to the vertex follower set of 
$$(v_{i_0+2}^{k-1},\ldots v_{n-1}^{k-1},v_0^k,v_1^k,\ldots,w_1,v_{i_0+1}^{k}) \in \sV^{i_0+2},$$
we would need to have the row of path sets
\[
\left(
X(\mathcal{G}_{i_0+1},v_{i_0+1}^{k-1}),
X(\mathcal{G}_{i_0+2},v_{i_0+2}^{k-1}),
\ldots,
X(\mathcal{G}_{i_0-1},v_{i_0-1}^{k}),
X(\mathcal{G}_{i_0},w_2)
\right)
\]
to be identical with the row of path sets
\[
\left(
X(\mathcal{G}_{i_0+2},v_{i_0+2}^{k-1}),
X(\mathcal{G}_{i_0+3},v_{i_0+3}^{k-1}),
\ldots,
X(\mathcal{G}_{i_0},w_1),
X(\mathcal{G}_{i_0+1},v_{i_0+1}^{k})
\right).
\]

We want to make sure that not too many coincidences of vertex follower sets $\mathcal{H}$ can occur in this manner. 
For each of these $2n$ vertices, consider the associated row of path sets as above.
 Considering all of these rows together gives us path sets filling in two arrays of the following schematic shape,
where the entries are path sets. Here $\bar{a} = X(\sG_{i_0}, w_1)$ and $\bar{b} = X(\sG_{i_0}, w_2)$.

\begin{equation*}
A=\left(
\begin{array}{ccccc}
x_{i+1} 	&	x_{i+2}	&	  \cdots 	&	x_{i-1}	&	\bar{a}	\\
x_{i+2}	&	x_{i+3}	&	 \cdots 	&	\bar{a}	&	y_{i+1}	\\
\vdots 	&	\vdots 	&	  \ddots 	&	\vdots 	&	\vdots 	\\
x_{i-1}	&	\bar{a}	& 	  \cdots 	&	 y_{i-3} 	&	y_{i-2}	\\
\bar{a} 	&	y_{i+1}	& 	 \cdots 	&	 y_{i-2} 	&	y_{i-1} \\
\end{array}
\right) \,\, 
%
B=\left(
\begin{array}{ccccc}
x_{i+1} 	&	x_{i+2}	&	 \cdots 	&	x_{i-1}	&	\bar{b}	\\
x_{i+2}	&	x_{i+3}	&	 \cdots 	&	\bar{b}	&	y_{i+1}	\\
\vdots 	&	\vdots 	&	  \ddots 	&	\vdots 	&	\vdots 	\\
x_{i-1}	&	\bar{b}	& 	  \cdots 	&	 y_{i-3} 	&	y_{i-2}	\\
\bar{b} 	&	y_{i+1}	& 	 \cdots 	&	 y_{i-2} 	&	y_{i-1} \\
\end{array}
\right)
\end{equation*} 
We show that matrices of this schematic shape always  have at  least $n+1$
distinct rows, as  a special case  of the following  combinatorial lemma.

\begin{lem}\label{matlem}
For any set $X$, let $A$ and $B$ be $n \times n$ matrices with coefficients from $X$, and let $R_A$ be the set of distinct rows of $A$, $R_B$ be the set of distinct rows of B. If $A$ and $B$ have constant skew-diagonal entries  $a_{i, n+1-i}=\bar{a} $ and $b_{i,n+1-i}=\bar{b}$,  with  $\bar{a} \ne \bar{b}$,  and if in addition $a_{ij} = b_{ij}$ 
holds for all entries not on the skew-diagonal ( $i+j \ne n+1$), then $|R_A\cup R_B|\geq n+1$. 
\end{lem}

The assertion of Lemma \ref{matlem} completes the proof  of the theorem; we prove it below.
\end{proof}

\begin{proof}[Proof of Lemma \ref{matlem}]
We prove the result by induction on $n$. It  holds for  the base case $n=1$, for  $A$ has one entry: $\bar{a}$, and $B$ has a distinct entry: $\bar{b}$.

Now suppose the theorem holds for $n$, and let $A,B\in M^{(n+1)\times (n+1)}(X)$. 
Then we have two matrices of the following form:

\begin{equation*}
A=\left(
\begin{array}{ccccc}
x_{1,1}	&x_{1,2}&	\cdots 	&	x_{1,n}	 &	\bar{a}	\\
x_{2,1}	& x_{2,2} &		\cdots 	&	\bar{a}	 &	x_{2,n+1}	\\
\vdots 	&	\vdots &	\ddots 	&	\vdots 	 &	\vdots 	\\
x_{n,1}	&	\bar{a} &	\cdots 	&	x_{n,n} &	x_{n,n+1}	\\
\bar{a}	&	 x_{n+1,2}& 	\cdots 	&	x_{n+1,n}	 &	x_{n+1,n+1}
\end{array}
\right),  \quad
B=\left(
\begin{array}{cccc}
x_{1,1} 	&		\cdots 	&	x_{1,n}	  &	\bar{b}	\\
x_{2,1}	&		\cdots 	&	\bar{b} 	  &	x_{2,n+1}	\\
\vdots 	&		\ddots 	&	\vdots 	  &	\vdots 	\\
x_{n,1}	&		\cdots 	&	x_{n,n}  &	x_{n,n+1}	\\
\bar{b}	&	 	\cdots 	&	x_{n+1,n}	  &	x_{n+1,n+1}
\end{array}
\right).
\end{equation*}

Let $R_A$ be the set of distinct rows of $A$, and $R_B$ be the set of distinct rows of B, and let $R=R_A\cup R_B$. 
We need to show $|R|\geq n+2$. Consider $A',B'\in M^{ n \times n}(X)$ defined as
the upper right $ n \times n$ corner of $A$ and $B$.
\begin{equation*}
A'=\left(
\begin{array}{cccc}
x_{1,2}	&		\cdots 	&	x_{1,n}	&    \bar{a}		\\
x_{2,2}	&		\cdots 	&	\bar{a}	&   x_{2,n+1}		\\
\vdots 	&	       \vdots 	&	\ddots 	&	\vdots 		\\
x_{n-1,2}   &               \cdots           &  x_{n-1,n}     &   x_{n-1,n+1} \\
\bar{a}	&	 	\cdots 	&   x_{n,n}	&	x_{n, n+1}		\\
\end{array}
\right), \,  
B'=\left(
\begin{array}{cccc}
x_{1,2}	&		\cdots 	  &  x_{1, n}     & 	\bar{b}		\\
x_{2,2}	&		\cdots 	  &  \bar{b}     & 	x_{2,n+1}			\\
\vdots 	&	        \vdots 	 &	\ddots 	  &	\vdots 		 	\\
x_{n-1,2}   &               \cdots           &  x_{n-1,n}     &   x_{n-1,n+1} \\
\bar{b}	&	 	\cdots         &	x_{n,n}   &	x_{n, n+1}			\\
\end{array}
\right)
\end{equation*}
By the induction hypothesis $A'$ and $B'$ have between them at least $n+1$ distinct rows. 
These rows will all remain distinct in $A$ and $B$ when we include  the first coordinate position.
The  $(n+1)$-st rows of $A$ and $B$ are identical  in their last $n$ entries:
\begin{equation*}
\bv_{n+1}= \left(
\begin{array}{cccc}
x_{n+1, 2}	 &	 	\cdots         &	x_{n+1,n}   &	x_{n+1, n+1} \\
\end{array}
\right) 
\end{equation*}	
 If this row $\bv_{n+1} $ with $n$ entries is different from all rows in $A'$ and $B'$, then we can include
the last row of $A$ with the rows of $A$ and $B$ corresponding to  the distinct rows of $A$ and $B$, to obtain
at least $n+1$ distinct rows in $R(A) \cup R(B)$.
 On the other hand if  $\bv_{n+1} $ already occurs in the set $R(A')\cup R(B')$ then 
 remove it from the list for $R(A')\cup R(B')$, obtaining at least $n$ 
 row positions  in $A$ and $B$ corresponding to distinct rows in  $R(A') \cup R(B')$ not equal to $\bv$.
Combine these row positions in $A,B$  with 
 the last row of both $A$ and $B$ (which differ in their $(n+1, 1)$ entry) 
 to obtain $n+2$ distinct rows in $R(A ) \cup R(B),$ completing the induction step. 
\end{proof}

\begin{rem}\label{rem:73}
 The lower bound $n+1$ of  Lemma \ref{matlem}
is tight, on taking all $a_{i,j} = b_{i,j}= \bar{a}$ off the skew-diagonal.
\end{rem}

 %
\subsection{Iterated interleaving and complete factorizations} \label{subsec:73}

We next consider {\em iterated interleaving factorizations}, in which we may continue to factorize a finitely factorizable path set $\sP$ 
 at  finitely factorizable factors $\sQ$ if they are decomposable.  
  An iterated factorization is said to be  {\em complete} if all factors are either indecomposable or are infinitely factorizable.

We associate to any (finite) iterated factorization a rooted tree   
 with root node $\sP$, and with leaf nodes corresponding to the factors in the iterated factorization,
 and with internal nodes labeled by factors in some intermediate factorization in the iteration process.
 The {\em depth} of a node in the tree is the number of edges traversed in the unique path to 
the root node. The root node has depth $0$. The {\em branching factor} at a node is the number of edges from it to nodes
at the next higher depth.

The iterated interleaving  process grows the tree, starting with a single root node $\sP$. 
Each iteration step  replaces one factor $\sQ$  in the current factorization by an 
   $n$-fold interleaving factorization of it (for some $n \ge 2$). This step corresponds to 
  adding  to this current  factor node $\sQ$ (which is a leaf node of the current tree)
$n$ new branches with the $n$-fold interleaving factors $\sQ_j = \psi_{j,n}(\sQ)$ as new leaves of $\sQ$, 
 so that  $\sQ$ becomes an internal node of the  new tree. 
 We treat any infinitely factorizable factors $\sQ$ encountered in the process as 
``frozen'' and do not factor them further. 
Figure 8.1
 exhibits such a tree.

 \begin{figure}[ht]\label{fig71}
	\centering
	\Tree[.$\sP$ 
$\sP_{0,4}$
[.$\sP_{1,4}$
[.$\mathcal{Q}_{0,2}$
$\mathcal{R}_{0,3}$
$\mathcal{R}_{1,3}$
$\mathcal{R}_{2,3}$
]
$\mathcal{Q}_{1,2}$
]
$\sP_{2,4}$
$\sP_{3,4}$
]
\newline

\vskip 0.2in 
 \hskip 0.2in {\rm FIGURE 8.1.} Iterated interleaving tree for
$\sP=\sP_{0,4}\pr((\mathcal{R}_{0,3}\pr\mathcal{R}_{1,3}\pr\mathcal{R}_{2,3})\pr\mathcal{Q}_{1,2})\pr\sP_{2,4}\pr\sP_{3,4}$.
\newline
\end{figure}

A factorization is said to be {\em complete} if each individual factor is either infinitely factorizable
or cannot be further factorized.

In the case of general closed sets $X \subset \sA^{\NN}$ the paper  \cite[Theorem 7.1]{ALS21} 
constructed examples where the recursive factorization procedure above can  go on forever
(with no infinitely factorizable factors), leading to
an infinite tree of factors of $X$. Furthermore these examples have no complete factorizations. 
The situation for  path sets is different:  any sequence of factorizations always terminates
in a complete factorization, see Theorem \ref{thm:complete_fact0} below.

The following  result will be used to establish termination of the  iterated interleaving process
for path sets.

\begin{thm} \label{thm:75} 
Let $\sP$ be a finitely factorizable path set having $m$ vertices in its minimal 
right-resolving presentation. Then for $n \ge 2$ 
any  interleaving factor  $\sP_j= \psi_{j,n}(\sP)$ in an $n$-fold interleaving factorization of $\sP$
 has at most $m-1$ vertices in  its minimal right-resolving presentation.
\end{thm}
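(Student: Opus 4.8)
The plan is to obtain this as essentially the equality-exclusion sharpening of Theorem~\ref{thm:56}, by feeding the equality case into Theorem~\ref{thm:68} and then contradicting the finite-factorizability hypothesis via Theorem~\ref{thm:63}. No genuinely new construction is needed; the work has already been done in proving Theorem~\ref{thm:68}, and here one simply assembles the pieces.

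First I would record the upper bound. Since $\sP$ has a minimal right-resolving presentation with $m$ vertices and, by hypothesis, admits an $n$-fold interleaving factorization $\sP = (\pr_n)_{j=0}^{n-1}\sP_j$, Theorem~\ref{thm:56} applies and tells us that every factor $\sP_j = \psi_{j,n}(\sP)$ has a minimal right-resolving presentation with \emph{at most} $m$ vertices. Thus the only thing remaining is to rule out the equality case, i.e.\ to show that no factor attains exactly $m$ vertices.

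Second, I would argue by contradiction. Suppose some factor $\sP_{j_0} = \psi_{j_0,n}(\sP)$ had a minimal right-resolving presentation with exactly $m$ vertices. Then the hypotheses of Theorem~\ref{thm:68} would be met: $\sP$ has a minimal right-resolving presentation with $m$ vertices, it admits an $n$-fold interleaving factorization, and at least one factor (namely $\sP_{j_0}$) attains the bound $m$. The key structural feature to exploit is precisely that Theorem~\ref{thm:68} requires only a \emph{single} factor to reach the bound $m$, not all of them; this is exactly what makes a single over-large factor enough to force the conclusion. Its conclusion then gives that $\sP$ itself is leveled.

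Finally, I would invoke the characterization in Theorem~\ref{thm:63}: a path set is infinitely factorizable if and only if its minimal right-resolving presentation is leveled. Since $\sP$ would be leveled, it would be infinitely factorizable, contradicting the standing hypothesis that $\sP$ is finitely factorizable (equivalently non-leveled). Hence no factor can have a minimal right-resolving presentation with $m$ vertices, and combining this with the at-most-$m$ bound from Theorem~\ref{thm:56} yields that each interleaving factor $\sP_j$ has at most $m-1$ vertices in its minimal right-resolving presentation, as claimed. The one point to state carefully is that ``finitely factorizable'' is being used interchangeably with ``non-leveled'' through Theorem~\ref{thm:63}, so that the contradiction closes cleanly; there is no computational obstacle beyond checking that the hypotheses of Theorem~\ref{thm:68} are literally satisfied in the equality case.
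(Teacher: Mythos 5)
Your proposal is correct and follows essentially the same route as the paper's own proof: apply Theorem~\ref{thm:56} for the bound of at most $m$ vertices, then rule out equality by feeding a single factor attaining $m$ into Theorem~\ref{thm:68} to conclude $\sP$ is leveled, hence infinitely factorizable by Theorem~\ref{thm:63}, contradicting finite factorizability. The paper's proof is a condensed version of exactly this argument, so no further comparison is needed.
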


\begin{proof}
 Theorem \ref{thm:56} shows that
if $\sP$ has an $n$-fold  factorization $\sP= (\pr)_{i=0}^{n-1} \sP_j$ then each $\sP_j$ has
at most $m$ vertices in its minimal right-resolving presentation. Theorem \ref{thm:68} shows that if 
in addition some factor $\sP_j$
has exactly $m$ vertices in its minimal right-resolving factorization, then $\sP$ and all $\sP_j$
are leveled, hence infinitely factorizable, contradicting the hypothesis that
$\sP$ is finitely factorizable.  It follows that  all factors $\sP_j$ must have at most $m-1$
vertices in their minimal right-resolving presentation.
\end{proof}

Theorem \ref{thm:75} implies the existence of complete factorizations of finitely factorizable path sets.

\begin{thm} \label{thm:complete_fact0} 
{\rm (Complete Factorizations of Path Sets)}

(1) Every path set  $\sP$ has at least one iterated interleaving presentation that
is complete. 

(2) Every  path set $\sP$ 
 has  finitely many iterated interleaving factorizations 
 (with ``frozen'' infinitely factorizable factors) that are complete.
 
 (3)  If  $\sP$ has   $m$ vertices in its minimal right-resolving
 presentation,  then each such complete factorization  has at most $(m-1)!$ factors.
 In addition the associated tree has  depth at most $m-1$,
 and has branching factor at internal nodes of depth  $j$  of at most $m-j-1$ . 
\end{thm}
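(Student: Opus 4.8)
The plan is to let the strict reduction in presentation size from Theorem \ref{thm:75} drive the whole argument. The key invariant I would establish first is that, in any iterated interleaving tree rooted at a path set $\sP$ whose minimal right-resolving presentation has $m$ vertices, every node at depth $j$ is a path set whose minimal right-resolving presentation has at most $m-j$ vertices. I would prove this by induction on $j$: the root has $m$ vertices; an internal node at depth $j$ is by construction a \emph{finitely} factorizable path set (an infinitely factorizable one would be frozen as a leaf, and finite factorizability is equivalent to non-leveledness by Theorem \ref{thm:63}) that is being $n$-factored with $n\ge 2$, so by Theorem \ref{thm:75} each of its factors at depth $j+1$ has at most $(m-j)-1 = m-(j+1)$ vertices.

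For part (1) I would observe that this invariant forces termination: along any root-to-leaf path each factorization step strictly decreases the (positive integer) vertex count of the minimal right-resolving presentation, so after finitely many steps every branch reaches a leaf that is either infinitely factorizable (frozen) or indecomposable — which is precisely a complete factorization. For the bounds in part (3), the vertex invariant immediately yields the depth bound: a node at depth $j$ has at least one vertex, so $m-j\ge 1$ and the depth is at most $m-1$. For the branching factor, an internal node at depth $j$ is finitely factorizable with at most $m-j$ vertices, so Theorem \ref{thm:non_leveled_fact} forces any $n$-fold factorization there to satisfy $n \le (m-j)-1 = m-j-1$, giving the claimed branching bound.

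The bound $(m-1)!$ on the number of factors is where the genuine combinatorial work lies, and I expect it to be the main obstacle, because leaves occur at many different depths rather than all at the bottom of the tree. I would isolate the following counting lemma: in a rooted tree in which every node at depth $j$ has at most $c_j\ge 1$ children and every node has depth at most $D$, the number of leaves is at most $\prod_{j=0}^{D-1}c_j$. This is proved by a short induction on $D$, since the root splits into at most $c_0$ subtrees each of depth at most $D-1$, while a node that is itself a leaf contributes $1\le\prod_j c_j$. Applying it with $c_j = m-j-1$ and $D=m-1$ then gives, writing $L$ for the number of leaves,
\[
L \;\le\; \prod_{j=0}^{m-2}(m-j-1) \;=\; (m-1)(m-2)\cdots 1 \;=\; (m-1)!.
\]

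Finally, for part (2) I would use that decimations compose to decimations, $\psi_{j',n'}\circ\psi_{j,n} = \psi_{\,j+j'n,\;nn'}$, so every path set appearing anywhere in the tree is a decimation of $\sP$ and therefore lies in the finite full decimation set $\ffD(\sP)$ of Theorem \ref{thm:decimation_set_bound}. Since the tree has depth at most $m-1$, branching at most $m-1$ at every node, and, at each internal node, only finitely many admissible values of $n$ — each determining a \emph{unique} factorization by Theorem \ref{thm:DIF2} — there are only finitely many distinct complete iterated interleaving trees, which establishes the finiteness claim.
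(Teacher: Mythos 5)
Your proposal is correct and follows essentially the same route as the paper's proof: the same strict-decrease invariant on minimal right-resolving presentation size via Theorem \ref{thm:75}, the same branching bound via Theorem \ref{thm:non_leveled_fact}, and the same depth-$(m-1)$ and $(m-1)!$ bounds. Your explicit tree-counting lemma for the leaf bound, and your appeal to the finiteness of $\ffD(\sP)$ together with uniqueness of $n$-fold factorizations for part (2), simply make rigorous what the paper's proof asserts more tersely.
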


\begin{proof}
The path set $\sP$ has $m$ vertices in its minimal right-resolving presentation.   
We take any nontrivial $n$-fold interleaving factorization of $\sP$, and we know at least one factor in it will
be finitely factorizable.  
 If any finitely factorizable $\sQ= \sP_i$ appearing in it
is decomposable, we may further factorize i$\sP_i$  (as an iterated interleaving factorization), each new factorization having
at least one finitely factorizable factor. Theorem \ref{thm:non_leveled_fact} says that any $n$-fold factorization of  $\sQ$  necessarily has $n < m(\sQ)$,
where $m (\sQ)$ is the number of vertices in a minimal right-resolving presentation of $\sQ$. Let $\sQ=(\pr)_{i=0}^{n-1} \sQ_{j,n}$ 
 Theorem \ref{thm:75} then says the number of
vertices $m(\sQ_{j,n})$ in the minimal right-resolving presentation of $\sQ_{j,n}$ has $m(\sQ_{j,n}) < m(\sQ)$. 
Call the {\em level} of a node its distance from the root node, where
  the root node is assigned level $0$. Then any level $k$ node
in the tree that is finitely factorizable corresponds to a factor  $\sQ$ that has at most $m-k$ vertices in its minimal right-resolving presentation. 
 It follows from this fact that the maximal possible  level  of any node in the tree is   $m-1$,  bounding the maximum nesting level of parentheses in
 the iterated interleaving factorization. 
 In addition  the maximal
branching factor possible at level $k$ is $m-k-1$, applying  Theorem \ref{thm:non_leveled_fact}.
We conclude that all possible such  trees are finite, that they can  have one node at level $0$, at most  $m-1$ nodes at level $1$, at most $(m-1)(m-2)$ nodes at level $2$, up
to $(m-1)!$ nodes at level $(m-1).$ Now the maximal  number of leaves possible  in such a tree, terminating this process on any level, is $(m-1)!$.
The total number of nodes, counting internal nodes, is $(m-1)! ( 1 +\frac{1}{2} + \frac{1}{3!} +\cdots + \frac{1}{(m-1)!} \le e(m-1)!$. 
\end{proof} 

\begin{rem}\label{rem:76}
We do not settle the question whether there is  uniqueness of 
the indecomposable factors in a complete factorization.
 The  infinitely factorizable factors are non-unique.
\end{rem}

%

%
%
\section{Concluding Remarks} \label{sec:concluding}

 %
%
\subsection{Automatic sequences associated to  path sets} \label{sec:83}

 Theorem \ref{thm:decimation_set_bound} has the consequence that
 every path set is $n$-automatic  for all $n \ge 1$ in the sense given in \cite{AL14a}. 
 One may ask whether the are many integer counting statistics of an arbitrary path set $\sP$ are
 $n$-automatic sequences in the sense of Allouche and Shallit \cite{AS92}, \cite{AS03}. 
 A statistic of particular interest is  the function $f(k) = N_k^{I}(\sP)$ that counts the number of initial words of length $k$ in the path set $\sP$.
 
 Decimations and interleavings together define an infinite collection
  of closure operations $ X \mapsto X^{[n]}$ on $\sA^{\NN}$. These are idempotent operations that preserve the property of being
 closed sets in the symbol topology. One may define $\sC(\sA)^{[n]}$ to be the subclass of
 path sets that is invariant under the $n$-th closure operation.
 (It is not clear whether these classes of sets are closed under union or intersection, however.) 
  These closure operations are effectively computable on
 path sets, using presentations. One can ask if there are automata-theoretic characterizions
 of the class of path sets that is invariant under a given closure operation. 
 
\appendix
%
%

\section{Path sets in Automata Theory}\label{sec:A0}

Path sets have an important characterizations in  automata theory. 
We recall basic definitions and terminology in automata theory, following Eilenberg \cite{Eilenberg74}
and, for infinite words, Perrin and Pin \cite{PP04}.
 Recall that $\sA^{\star}$  denotes the set of all finite words in the alphabet $\sA$, including the empty word $\emptyset$.
 We let $A^{\omega}$ denote all infinite words $a_0a_1a_2 \cdots$ in the language with alphabet $\sA$ (rather than $\sA^{\NN}$, which we used in the main text.)

%
%

\subsection{Automata and languages}\label{sec:A0.1}

 \begin{defn}\label{def:FA1}
A {\em finite automaton} on an alphabet $\sA$ is denoted $\bfA := (Q,  I, T)$ ( in full $(Q, \sA, E, I, T)$)  which has a finite directed labeled graph
in which  $Q$ denotes the (finite) 
set of its states, with specified subsets  $I$ of {\em initial states}  and  $T$ of {\em terminal states} (or {\em final states}). (The sets $I$ and $T$ may overlap.)
Additional data speifying the automaton consists of  labeled  edge data  $E \subset Q \times \sA \times Q$, 
writing $ e=(v_1, a, v_2)$, for the directed edge from state $v_1$ to state  $v_2$, 
carrying  a label $a \in \sA$.

\end{defn} 

The alphabet $A$ and the labeled edge  data  $E$ are traditionally omitted from the notation for $\bfA$. 

\begin{defn}\label{def:DET1}
A finite automaton is {\em deterministic} if $I$ contains one element and each state $v \in V$ has
for each symbol $a \in \sA$ at most one exit edge labeled with this symbol.
Otherwise it is {\em nondeterministic}.
\end{defn}

A deterministic automaton is characterized by the property that for each finite path the symbolic path label data plus the initial state on the path uniquely determine 
the path; i.e., the sequence of states visited in following the path. 

\begin{defn}\label{def:FL1}
The {\em formal language $L(\bfA) \subset \sA^{\star}$}  associated to a finite automaton $\bfA$ is the set of all finite symbol sequences obtained 
as labels following
some directed path starting from an initial vertex $v \in I$ and ending at some terminal vertex $w \in T$.
The empty sequence (denoted $1$) is included  in   $L(\bfA)$  if some initial state is a terminal state. 
\end{defn}

\begin{defn}\label{def:BA}
A  finite {\em B\"{u}chi automaton}
has the same automaton $\bfA$, but the (B\"{u}chi)  language 
$ L^{\omega}(\bfA)$ {\em recognized} by such an  automation is the set of  all infinite words $a_0 a_1 a_2\ldots \in \sA^{\NN}$ which
are labels of an  infinite directed path starting at an initial state and passing through terminal states at infinitely  many times.
\end{defn}

The theory of B\"{u}chi \cite{Buchi62} as originally developed allowed  automata having a  countably infinite number of states.
 
\begin{defn}\label{def:RECOG} 
For fixed $\sA$ the  set of languages recognized by some finite  B\"{u}chi automaton, allowing nondeterminism and arbitrary sets of initial and terminal vertices, are called
{\em recognizable languages}  on alphabet $\sA$. (\cite[page 25]{PP04}.)
\end{defn}

 The class of recognizable languages are characterized as coinciding with the class of $\omega$-rational languages.
(\cite[Chapter I. Theorem 5.4]{PP04}).

%
%

\subsection{Automata-theoretic characterization of  path sets }\label{sec:A0.2}

To a presentation $(\sG, v)$ of a path set $\sP$ on a finite alphabet $\sA$ we canonically associate 
the finite automaton $\bfA_{\sG}=(Q, I, T)$ where $Q= V(\sG)$, the initial state set $I= \{ v\}$ and the  terminal state set $T = V(\sG)$ 
consisting every state of $\sG$, and with edge label data specified  by $\mathcal{E}$.
A path set $\sP = X(\sG, v)$ then coincides with the B\"{u}chi language $ L^{\omega}(\bfA)$ recognized by the associated B\"{u}chi automaton 
$\bfA$ associated to $(\sG, v)$. 
We have the following characterization. 

\begin{thm}\label{thm:PSA}
{ \rm (Automata Characterization of Path Sets)}
The following properties are equivalent, for a finite alphabet $\sA$.
\begin{enumerate}
\item[(1)]
$\sP$ is a path set with alphabet $\sA$.
\item[(2)]
$\sP$ is a recognizable language that is  a closed set in $\sA^{\NN}$.
\item[(3)] 
$\sP$ is recognized by a finite B\"{u}chi automaton in which every state is terminal.
\item[(4)] 
$\sP$ is recognized by a finite deterministic B\"{u}chi automaton in which every state is terminal.
\end{enumerate}
\end{thm}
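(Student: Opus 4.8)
The plan is to prove the cyclic chain $(1)\Rightarrow(4)\Rightarrow(3)\Rightarrow(1)$, establishing the equivalence of (1), (3), (4), and then to fold (2) into the loop by proving $(1)\Rightarrow(2)$ and $(2)\Rightarrow(1)$ separately. For $(1)\Rightarrow(4)$, I would invoke Proposition \ref{thm:3.2frompaper1} to obtain a right-resolving, pruned, reachable presentation $(\sG,v)$ of $\sP$, and form the canonically associated automaton $\bfA_{\sG}=(V(\sG),\{v\},V(\sG))$ in which every state is terminal. Since every state is terminal, every infinite run automatically passes through terminal states infinitely often, so the Büchi acceptance condition is vacuous and $L^{\omega}(\bfA_{\sG})$ is precisely the set of labels of infinite walks from $v$, i.e.\ $\sP$. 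The right-resolving property is exactly determinism (single initial state, at most one exit edge per label), which yields (4). The implication $(4)\Rightarrow(3)$ is immediate, since a deterministic automaton is a special case of a finite Büchi automaton with all states terminal.

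For $(3)\Rightarrow(1)$, suppose $\sP$ is recognized by a finite Büchi automaton $\bfA=(Q,I,T)$ with $T=Q$, and let $\sG_{\bfA}$ be its underlying labeled graph. Because every state is terminal, acceptance again reduces to the mere existence of an infinite path from an initial state, so $L^{\omega}(\bfA)=\bigcup_{v\in I}X(\sG_{\bfA},v)$ is a finite union of vertex path sets. By Theorem \ref{thm:operations}(3) this union is a path set; equivalently one may adjoin a fresh marked vertex $v_0$ whose exit edges duplicate those leaving each $v\in I$, producing a single presentation of $\sP$. This closes the loop on (1), (3), (4).

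It remains to bring in (2). The implication $(1)\Rightarrow(2)$ is short: a path set is closed by Theorem \ref{thm:operations}(1), and the automaton produced in $(1)\Rightarrow(3)$ exhibits it as a recognizable language. The substantive direction is $(2)\Rightarrow(1)$, which I regard as the main obstacle. Given a recognizable closed $\sP$ presented by a Büchi automaton $\bfA=(Q,I,T)$, I would first isolate the subset $Q'\subseteq Q$ of states admitting an accepting infinite continuation (those that can reach a terminal state lying on a cycle); $Q'$ is effectively computable. The prefix language $\mathrm{Prefix}(\sP)=\{w\in\sA^{\star}:w\text{ is a finite prefix of some }x\in\sP\}$ then equals the set of finite words labeling a path from $I$ to a state of $Q'$, hence is regular, and it is manifestly prefix-closed. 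A prefix-closed regular language is recognized by a deterministic finite automaton having a single ``dead'' sink state for rejected words; discarding that sink leaves a labeled graph $\sG$ with marked vertex $v$ whose finite walk-labels are exactly $\mathrm{Prefix}(\sP)$, all of whose states are terminal.

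The final and key step is to upgrade this statement about finite prefixes to the full infinite-word set. Here I would use that $\sP$ is closed in the product topology: a closed set is determined by its prefixes, so $\sP$ coincides with the set of infinite words all of whose finite prefixes lie in $\mathrm{Prefix}(\sP)$, and this set is precisely $X(\sG,v)$. Hence $\sP$ is a path set. The delicate points to execute carefully are the regularity of $\mathrm{Prefix}(\sP)$ (requiring the analysis of $Q'$) and the passage from the prefix presentation back to $\sP$ via closedness; this direction is also exactly the content of Perrin--Pin \cite[Chapter III, Proposition 3.9]{PP04}, which may be cited to shorten the argument.
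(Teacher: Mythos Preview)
Your argument is correct. The paper's own proof is much terser: it simply observes that (1) and (3) agree by definition (modulo the finite union over initial states, which you handle explicitly via Theorem~\ref{thm:operations}(3)), and then cites Perrin--Pin \cite[Chapter~III, Proposition~3.9]{PP04} wholesale for the equivalence of (2), (3), and (4). You instead unfold that citation, giving the $(2)\Rightarrow(1)$ direction directly by passing through the prefix language $\mathrm{Prefix}(\sP)$, observing it is regular and prefix-closed, stripping the dead state from its minimal DFA, and then using closedness of $\sP$ to identify $\sP$ with the resulting $X(\sG,v)$. This is a genuinely self-contained route that makes the role of the closedness hypothesis transparent, whereas the paper's proof treats the hard direction as a black box. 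The trade-off is length versus insight: the paper's proof is two lines, yours actually explains why the topological condition (closed) matches the automata condition (all states terminal).
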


\begin{proof}
The path set definition $(1)$ is equivalent to  $(3)$.
The equivalence of  $(2), (3), (4)$ and $(5)$ is shown in  Proposition 3.9 in Chapter III of Perrin and Pin \cite{PP04}.  The reduction from (3) to (4) uses the fact that every state of the automaton is a terminal state. 
\end{proof}

It is known that the set  of  languages recognizable languages
is strictly larger that those recognized by deterministic B\"{u}chi automata. The
set of  recognizable languages
are closed under complement, while the languages recognized by deterministic B\'{u}chi automata
(allowing arbitrary subsets of $Q$ of terminal states) are not closed under complement. 
However the set of detrerminisitic B\"{u}chi languages has a nice characterization.
given  in \cite[Chapter III, Corollary 6.3]{PP04}  

The  class  $\sC(\sA)$ of path sets forms  a strict subset of the  languages recognized by some
deterministic B\"{u}chi automaton; i.e., there are non-closed sets of $A^{\NN}$ recognized by some deterministic B\"{u}chi automaton, as shown by the following example.

\begin{exmp}\label{exam:non-closed} 
Consider the directed labeled graph with two states pictured, on  alphabet $\sA= \{0.1\}.$ 


\begin{figure}[ht]\label{fig0}
	\centering
	\psset{unit=1pt}
	\begin{pspicture}(-130,-30)(130,20)
		\newcommand{\noden}[2]{\node{#1}{#2}{n}}
		\noden{$v_0$}{-30,0}
		\noden{$v_1$}{30,0}
		\bcircle{n$v_0$}{90}{1}
		\bcircle{n$v_1$}{270}{1}
		\bline{n$v_0$}{n$v_1$}{0}
	\end{pspicture}
\newline
\hskip 0.5in {\rm FIGURE A.1.} Graph for Example ~\ref{exam:non-closed}. The marked initial vertex is $v_0$.
\newline
\end{figure}

Let $I =\{v_0\}$ be the initial state and let $\bfA_1$ denote the automaton with every state terminal
$T =\{ v_0, v_1\}$ and let $\bfA_2$ denote the automaton with $T = \{v_1\}$.
Then
$$
L^{\omega}(\bfA_1) = \{ 1^k 0 1^{\infty}: \,  k \ge 0\}\, \cup\, \{ 1^{\infty}\}
$$
while 
$
L^{\omega}(\bfA_2) = \{ 1^k 0 1^{\infty}: \,  k \ge 0\}.
$
The set $L^{\omega}(\bfA_1)$ is a closed set in $\sA^{\omega}$, and is a path set.
The set  $L^{\omega}(\bfA_2)$ is not closed in $\sA^{\omega}$ because it omits the limit point
$\{ 1^{\infty}\}$. 
\end{exmp} 

%
%

\subsection{Minimal deterministic automata}\label{sec:A0.3}

A  finite automaton $\bfA$  is  {\em deterministic} if  
the initial state plus the symbols sequence of some legal  path uniquely determine the set of states 
that path  passes through. This concept is equivalent to {\em right-resolving} in
symbolic-dynamics terminology.

\begin{defn}\label{def:presentation}
(1) 
An automaton $(Q, I, T)$ is {\em accessible} if every state can be reached by a directed path from some initial state.

(2) An automaton is {\em co-accessible} if every state has a directed path to a terminal state.

(3) An automaton that is accessible and co-accessible is called {\em trim}.
\end{defn}

For a finite automaton $\bfA$ with one initial state $v$,   the property of {\em  reachability}  is equivalent to it  being {\em accessible.}
For a finite  automaton $\bfA$ in which   every state is  terminal, the property of being {\em pruned} 
 is equivalent to being {\em co-accessible.} 
For  finite automaton the property of right-resolving is equivalent to being {\em deterministic}. 
A deterministic automaton is {\em complete} if each state has   an exit edge with label for 
  each letter of the label alphabet $\sA$.

\begin{thm}\label{thm:deterministic} 
Every finite (non-deterministic) automaton $\bfA= (Q, I, T)$ has an equivalent deterministic  automaton $\bfA^{\ast}= (Q^{\ast}, I^{\ast}, T^{\ast})$  that
has $L(\bfA) = L(\bfA^{\ast}).$
\end{thm}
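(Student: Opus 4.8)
The plan is to use the classical subset (powerset) construction, which converts any nondeterministic automaton into a language-equivalent deterministic one. Given $\bfA = (Q, I, T)$ with labeled edge data $E \subseteq Q \times \sA \times Q$, I would take the states of $\bfA^{\ast}$ to be subsets of $Q$: set $Q^{\ast}$ to be the collection of all $S \subseteq Q$, or, more economically, only those subsets reachable from the start state, which keeps $Q^{\ast}$ finite since $|Q| < \infty$ forces at most $2^{|Q|}$ subsets. Declare the unique initial state to be $I^{\ast} = \{\, I \,\}$, i.e. the set $I$ regarded as a single vertex. For each $S \subseteq Q$ and each symbol $a \in \sA$, add exactly one edge labeled $a$ from $S$ to
\[
\delta(S, a) := \{\, q' \in Q : (q, a, q') \in E \text{ for some } q \in S \,\},
\]
and set the terminal states to be $T^{\ast} = \{\, S \subseteq Q : S \cap T \neq \emptyset \,\}$. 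Since each state $S$ has exactly one exit edge carrying a given label $a$, and there is a single initial state, $\bfA^{\ast}$ is deterministic in the sense of Definition~\ref{def:DET1}.

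The heart of the argument is a reachability lemma proved by induction on word length. For a finite word $w = a_0 a_1 \cdots a_{k-1} \in \sA^{\star}$, define
\[
S_w := \{\, q \in Q : \text{some directed path in } \bfA \text{ from an initial state, labeled } w, \text{ ends at } q \,\}.
\]
The claim is that the unique $w$-labeled path in $\bfA^{\ast}$ issuing from $I^{\ast}$ terminates precisely at the state $S_w$. The base case $w = \emptyset$ gives $S_{\emptyset} = I = I^{\ast}$. For the inductive step, assuming the $w$-labeled path reaches $S_w$, reading one further symbol $a$ moves deterministically to $\delta(S_w, a)$, and unwinding the definition of $\delta$ shows this coincides with $S_{wa}$, the set of $\bfA$-states reachable by reading $wa$.

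With the lemma established, the conclusion is immediate from Definition~\ref{def:FL1}: one has $w \in L(\bfA)$ if and only if some $w$-labeled path in $\bfA$ runs from an initial state to a terminal state, i.e. if and only if $S_w \cap T \neq \emptyset$, i.e. if and only if $S_w \in T^{\ast}$, i.e. if and only if the unique $w$-labeled path in $\bfA^{\ast}$ ends in a terminal state, i.e. if and only if $w \in L(\bfA^{\ast})$. Hence $L(\bfA) = L(\bfA^{\ast})$. I do not expect a serious obstacle here, as this is a textbook construction; the only points requiring genuine care are the inductive characterization of the reached subset (the crux of the equivalence) and the observation that restricting $Q^{\ast}$ to reachable subsets leaves the recognized language unchanged while guaranteeing finiteness.
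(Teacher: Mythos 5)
Your proposal is correct and follows exactly the approach the paper relies on: the paper's proof simply cites the classical subset construction in Eilenberg, and you have carried out that same construction in full, including the standard induction on word length showing the reached subset in $\bfA^{\ast}$ is the set of $\bfA$-states reachable by the same word. The details you supply (single initial state $I^{\ast}=I$, transitions $\delta(S,a)$, terminal states $\{S : S\cap T\neq\emptyset\}$) are precisely the ones the cited reference contains, so there is no substantive difference.
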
 

\begin{proof}
This is proved using the subset construction \cite[Chapter III, Sect. 2]{Eilenberg74}.
\end{proof}

\begin{thm}
(1) Every finite  automaton $\bfA$ has a minimal deterministic presentation $\bfA_{min}$ giving the same language $L(\bfA)$.This presentation is
unique up to isomorphism of automata. 
(2) The  minimal automaton is deterministic and accessible. 
The behavior of accepting paths from each state of the minimal automaton is different. 
The minimal automaton  is trim unless there is a state
which has no accepting exit paths (``null state.'') 
(3) The minimal automaton is not complete unless $L(\bfA) = \sA^{\NN}$ is the full shift.
\end{thm}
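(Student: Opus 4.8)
The plan is to prove this as the automata-theoretic counterpart of the minimization already carried out in symbolic-dynamics language in Theorem~\ref{thm:minimal_presentation}, by combining the determinization of Theorem~\ref{thm:deterministic} with a state-merging (reduction) step, and then reading off the completeness dichotomy from the structure of the reduced automaton. The construction of the minimal automaton and its uniqueness are standard, following the subset and reduction constructions in Eilenberg \cite[Chapter III]{Eilenberg74} already cited for Theorem~\ref{thm:minimal-to-RRminimal}.

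For existence in part (1), I would start from $\bfA$, apply the subset construction of Theorem~\ref{thm:deterministic} to obtain an equivalent deterministic automaton, delete every state not accessible from the initial state (this preserves $L(\bfA)$, since only accessible states carry paths issuing from $I$), and then pass to the quotient by the equivalence $p \equiv q$ that holds when $p$ and $q$ have the same right language, i.e. the same set of words labeling accepting paths out of the state (the automata-theoretic follower set). Determinism makes the transition map descend to the quotient, and the recognized language is unchanged because acceptance depends only on the $\equiv$-class; the resulting automaton is the candidate $\bfA_{min}$. For uniqueness I would compare against the canonical Nerode automaton: put on $\sA^{\star}$ the right-congruence $u \sim_L v$ when $uw \in L \iff vw \in L$ for all $w \in \sA^{\star}$, take its finitely many classes as states, $[\emptyset]$ as initial state, the classes meeting $L$ as terminal states, and $[u] \to [ua]$ on symbol $a$ as transitions. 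Any accessible deterministic automaton recognizing $L$ maps onto this one by sending each state to the class of a word reaching it; this map is label-preserving and is an isomorphism exactly when the automaton is accessible and follower-separated, so the reduced accessible automaton is unique up to isomorphism.

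Part (2) is then immediate from the construction. The automaton $\bfA_{min}$ is deterministic (a quotient of a deterministic automaton), accessible (inaccessible states were removed before quotienting), and its states have pairwise distinct accepting behavior by the very definition of $\equiv$. It is co-accessible, hence trim, unless some state has empty right language; such a state is necessarily unique, it is the null (dead) state, and it is exactly the class of words admitting no accepting extension.

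The step requiring the most care is part (3). For an arbitrary regular language, completeness of the reduced automaton does not force $L = \sA^{\star}$ (a non-blocking language, such as all words ending in a fixed letter, is a counterexample), so the statement is to be read in the regime of this appendix, where every state of $\bfA$ is terminal and hence $L(\bfA)$ is prefix-closed. In that regime a word $u$ has nonempty right language if and only if $u \in L$, so $\bfA_{min}$ has no null state and is trim; completeness then says that from every state every symbol labels an exit edge, which by tracing paths from the initial vertex $v$ is equivalent to every finite word being the label of a path from $v$, i.e. $L = \sA^{\star}$, equivalently the presented path set equals the full shift $\sA^{\NN}$. The converse is clear, since $\sA^{\NN}$ has the one-state minimal presentation carrying all self-loops, which is complete. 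This recovers, in automata language, the completeness half of Theorem~\ref{thm:minimal_presentation}, and it is the identification of the correct prefix-closed setting that makes the dichotomy in (3) hold.
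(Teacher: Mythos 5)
Your proposal is correct, and in substance it follows the same route the paper takes: the paper's entire proof of this theorem is a citation of Eilenberg (Chapter III, Section 5, Theorem 5.2), whose content is exactly the determinize--restrict-to-accessible--quotient construction and the Nerode-style uniqueness argument that you write out explicitly. What you add is worthwhile in two places. First, you supply the argument the citation hides, including why the right-language equivalence descends through the deterministic transition structure and why all states of empty right language collapse to a single class, which is precisely what produces the ``trim unless there is a null state'' clause of part (2) from the construction instead of by assertion. Second, and more importantly, your handling of part (3) repairs a genuine imprecision: as you observe, for an arbitrary finite automaton the claim is false under the paper's definition of completeness (the minimal automaton of the words over $\{0,1\}$ ending in $1$ is trim and complete yet recognizes a proper sublanguage of $\sA^{\star}$), so the dichotomy is only valid in the regime of this appendix, where every state is terminal, $L(\bfA)$ is prefix-closed, every nonempty residual contains the empty word, and completeness of the trim minimal automaton forces $L(\bfA)=\sA^{\star}$, i.e.\ the presented path set is the full shift $\sA^{\NN}$. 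The paper waves at this by saying ``(3) is mentioned in this proof''; your prefix-closed reading is what makes the statement true. One small caution on uniqueness in part (1): the claim that any accessible, follower-separated deterministic automaton recognizing $L$ is isomorphic to the Nerode automaton requires fixing a convention about the empty residual, since a trim reduced automaton and its completion by a null state both satisfy your hypotheses but are not isomorphic; uniqueness should therefore be asserted among trim automata (Eilenberg's partial-transition convention) or among complete ones, not across the two. This looseness is already present in the theorem's own wording of part (2) and does not affect the substance of your argument.
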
 

\begin{proof} See Eilenberg \cite[Chapter III, Sect. 5]{Eilenberg74}. Theorem 5.2 gives (1) and (2), and (3) is mentioned in this proof.
The proof is by construction.
\end{proof} 

Given a finite automaton, there is an  effectively computable procedure to obtain the minimal deterministic automaton,
see the discussion in Eilenberg \cite[Chapter III] {Eilenberg74}.
Some authors study  {\em complete minimal automata}, which are obtained by adding an extra ``null state'' to the automaton
which is not co-accessible, e.g. Sakarovich \cite[Chapter I.4.2]{Sakarovich:90}. 

\begin{rem}
 Every finite B\"{u}chi automaton  recognizes a language recognizable by a finite trim B\'{u}chi automaton
(\cite[Chapter I, Proposition 5.2]{PP04}. If the automaton is deterministic then the corresponding trim automaton is
deterministic.  But for general automata the  set of  terminal states may be altered under the transformation.
\end{rem}

%
%

\subsection{Path set languages and closures of rational languages}\label{sec:A0.4}

\begin{defn}\label{def:PSL}
{\em 
A {\em path set language} $L^{I}(\sP) \subset \sA^{\star}$ of a path set $\sP$  is the set of finite initial prefixes of words in
the path set.
}
\end{defn}

Eilenberg \cite[Chapter XIV, page 379]{Eilenberg74} defines a closure operation on a  language $L \subset \sA^{\ast}$,
which defines $\bar{L} \subset A^{\omega}$ to be the set of all infinite words $a_0a_1a_2 \ldots$ which has infinitely 
many prefixes $a_0a_1 \ldots a_n$ being a word in $L$. 

\begin{defn}\label{def:prefix-closed}
A language $L \subset A^{\star}$ is called {\em prefix-closed} if all prefixes of each word in $L$ belongs to $L$.
\end{defn}

A path set language $C= L(\sP)$  is prefix-closed. One also has
$\overline{L(\sP)} = \sP$. 

\begin{thm}\label{thm:PSL}
{\rm (Path Set Language Characterization)} 
The following properties are equivalent.
\begin{enumerate}
\item[(1)]
$\sL$ is a path set language $L^{I}(\sP)$ for some $\sP$ using the finite alphabet $\sA$.
\item[(2)]  
$\sL$ is a rational language which  is prefix-closed in the sense that  all the prefixes of 
each word $w \in L$ also belong to $L$.
\item[(3)]
$\sL$ is  recognizable by a trim finite deterministic automaton,
in which all states are terminal. 
\end{enumerate}
For any such language the associated path set  $\sP = \overline{L}$  consists of 
all infinite words which have infinitely many prefixes belonging to $L$.
Any other rational languages $L_1$ which have $\sP$ as their closure have $L_1 \subset L$.
\end{thm}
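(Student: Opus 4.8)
The plan is to prove Theorem~\ref{thm:PSL} by establishing the cycle of implications $(1) \Rightarrow (3) \Rightarrow (2) \Rightarrow (1)$, and then handling the two concluding claims about the closure $\sP = \overline{L}$ and maximality of $L$ among rational languages with closure $\sP$. The implication $(1) \Rightarrow (3)$ is essentially bookkeeping: a path set language $L^{I}(\sP)$ arises from a presentation $(\sG, v)$, which by Proposition~\ref{thm:3.2frompaper1} may be taken right-resolving, pruned and reachable. By Remark~\ref{rem:trim} this presentation is exactly a trim deterministic automaton with every state terminal, recognizing precisely the finite prefixes of paths, i.e.\ $L^{I}(\sP)$. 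For $(3) \Rightarrow (2)$, I would note that a language recognized by a finite (deterministic) automaton is rational by definition, and prefix-closure follows because every state is terminal: any prefix of an accepted word traces an initial segment of an accepting path, which itself ends at a (terminal) state and is therefore accepted.

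\emph{The substantive direction} is $(2) \Rightarrow (1)$, showing that an arbitrary prefix-closed rational language is the initial-word language of some path set. Here the plan is to take a trim deterministic automaton $\bfA$ recognizing $L$ (available by Theorem~\ref{thm:deterministic} and minimization), and argue that prefix-closure forces every \emph{accessible} state to be terminal: if a state $q$ is reached by a word $w \in L$ that has an extension to a longer accepted word, then $w$ itself is a prefix of that word, hence $w \in L$, so $q$ is terminal. One must be slightly careful about states from which no terminal state is reachable, but trimness (co-accessibility) rules these out. The resulting automaton, viewed as a pointed labeled graph $(\sG, v)$ with initial state $v$, gives a path set $\sP = X(\sG, v)$, and one checks $L^{I}(\sP) = L$ by matching finite paths in $\sG$ to words in $L$.

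\emph{For the concluding assertions} I would invoke Corollary~\ref{thm:initialblocks} together with the closure definition: the topological closure of the set of initial words of any $X$ recovers $\overline{X}$, and since path sets are closed (Theorem~\ref{thm:finite_symbolic}), $\overline{L^{I}(\sP)} = \sP$; equivalently $\sP$ consists of all infinite words all of whose (equivalently, infinitely many of whose) prefixes lie in $L$, the two formulations coinciding because $L$ is prefix-closed. For maximality, suppose $L_1$ is rational with $\overline{L_1} = \sP$. Every word $w \in L_1$ is a prefix of some infinite word in $\sP$ (built from $w$ together with an infinite continuation inside $\overline{L_1}$), hence $w$ is an initial word of $\sP$, giving $w \in L = L^{I}(\sP)$; thus $L_1 \subseteq L$.

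\emph{The main obstacle} I anticipate is the careful matching in $(2) \Rightarrow (1)$ between prefix-closure of $L$ and the ``every accessible state is terminal'' property of the automaton, particularly in pinning down that no accepted word is a strict prefix of a \emph{non}-accepted word. Prefix-closure handles prefixes of accepted words, but one also needs that the presentation is pruned so that $\overline{L}$ is genuinely a path set of the desired language; the interplay between the automata-theoretic ``trim'' normalization and the symbolic-dynamics ``pruned, right-resolving'' conditions (mediated by Remark~\ref{rem:trim}) is where the argument must be made watertight rather than merely asserted.
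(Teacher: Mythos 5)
Your cycle $(1)\Rightarrow(3)\Rightarrow(2)$ is sound, and it supplies exactly what the paper leaves implicit: the paper's entire written proof is a one-line citation of Eilenberg's Exercise 5.1 for $(2)\Leftrightarrow(3)$, with $(1)\Leftrightarrow(3)$ and the closing assertions never spelled out. The genuine problem is the direction you yourself flagged as the ``main obstacle,'' $(2)\Rightarrow(1)$, and the obstacle is not one of care but of truth: it cannot be closed. Finite-word trimness (accessible, and every state can reach a terminal state) of a DFA in which every state is terminal does \emph{not} forbid sink states, and your construction breaks exactly there. Concretely, take $L=\{\epsilon, a\}$ (or even $L=\{\epsilon\}$, with $\epsilon$ the empty word): this is rational and prefix-closed, its trim DFA has a terminal sink, and the associated pointed graph $(\sG,v)$ carries no infinite walks from $v$, so $L^{I}(X(\sG,v))=\emptyset\neq L$. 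Structurally, a path set language $L^{I}(\sP)$ never contains a prefix-maximal word, since every initial word of $\sP$ is a prefix of an infinite word of $\sP$; condition (2) as written allows maximal words, so $(2)\Rightarrow(1)$ is false as stated. Any repair must add the hypothesis that every word of $L$ is a proper prefix of another word of $L$ (equivalently, read ``trim'' in (3) as the symbolic-dynamics accessible-plus-pruned of Remark~\ref{rem:trim}). With that hypothesis your argument does go through: all states are terminal by your prefix-closure argument, no state can be a sink because its access word would be prefix-maximal in $L$, and then the reachable, pruned, deterministic pointed graph presents a path set whose initial words are exactly $L$.

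Your proof of the final maximality assertion also contains an invalid step. You claim every $w\in L_1$ is a prefix of some infinite word of $\sP=\overline{L_1}$, ``built from $w$ together with an infinite continuation inside $\overline{L_1}$.'' That does not follow: the closure $\overline{L_1}$ consists of the infinite words having \emph{infinitely many} prefixes in $L_1$, and a particular $w\in L_1$ need not be a prefix of any such word. Indeed the assertion itself fails without a further hypothesis: if $L=L^{I}(\sP)$ and $z$ is any word that is not an initial word of $\sP$, then $L_1=L\cup\{z\}$ is rational and still satisfies $\overline{L_1}=\sP$ (deleting a single word from the prefix set of an infinite word leaves infinitely many), yet $L_1\not\subseteq L$. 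This matches the paper's own caveat in the sentence preceding the theorem, that one needs $L_1$ to be recognized by a co-accessible automaton to conclude $L_1\subseteq L^{I}(\overline{L_1})$. In summary: $(1)\Rightarrow(3)$ and $(3)\Rightarrow(2)$ are correct; $(2)\Rightarrow(1)$ and the maximality claim are unprovable as stated, and the gaps you would need to fill are missing hypotheses in the theorem itself rather than missing steps in your argument.
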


\begin{proof}
The equivalence of (2) and (3) appears as Exercise 5.1 in \cite[page48]{Eilenberg74}. 
\end{proof}

One can define a closure operation on regular languages $L$, as follows.  One first passes  to the infinite language
$\bar{L}$, which is a path set.  
Then one passes to the initial prefix language ${\sL}^{i}( \bar{L})$ of this path set, which is a prefix-closed language.
This relation is idempotent because the closure of ${\sL}^{i}( \bar{L})$ is again $\bar{L}.$
If $L$ is recognized by a co-accessible automaton then $L \subseteq {\sL}^{i}( \bar{L})$.\\

 %
%
\section{Self-interleaving criterion} \label{sec:B0}

We give a sufficient condition   on a presentation of  a path set to guarantee that  all  interleaving factorizations
of it are   self-interleavings (possibly of other path sets).


\begin{thm}\label{thm:selfloop}
{\rm (Sufficient condition for only  self-interleaving factorizations )}
Let $\sP= X(\sG,v)$ be a path set having a right-resolving presentation
in which  the  graph $\sG$ has a self-loop at
the initial vertex $v$.
Then  for all $n \ge 1$ any $n$-fold interleaving factorization $\sP= \sP_0 \pr \sP_1 \pr \ldots \pr \sP_{n-1}$
must be a  self-interleaving $\sP = \sQ^{(\pr n)}$ of a single factor $\sQ$, where $\sQ$ depends on $n$.  
\end{thm}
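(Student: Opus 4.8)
The plan is to exploit the self-loop as the single-letter identity $\sP^a = \sP$ and to combine it with the uniqueness of interleaving factorizations. First I would reduce the problem: by Theorem~\ref{thm:DIF2} any $n$-fold interleaving factorization of $\sP$ has factors $\sP_j = \psi_{j,n}(\sP)$, so it suffices to prove the equalities $\psi_{0,n}(\sP) = \psi_{1,n}(\sP) = \cdots = \psi_{n-1,n}(\sP)$; writing $D_j := \psi_{j,n}(\sP)$, I then set $\sQ := D_0$ and conclude $\sP = \sQ^{(\pr n)}$. Let $a$ be the label of a self-loop at $v$. Since the presentation is right-resolving, $a$ labels the unique edge leaving $v$ with that label, and that edge returns to $v$; hence reading $a$ from $v$ stays at $v$, and by the vertex-path-set description following Definition~\ref{def:word_vertex_path_set} this gives the word path set identity $\sP^a = X(\sG,v) = \sP$. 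Equivalently, $a\sP \subseteq \sP$ and every element of $\sP$ beginning with $a$ has its tail again in $\sP$.

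The key computation I would carry out is the decimation-shift produced by prepending $a$. For any $y = y_0y_1\cdots$, writing $ay = a\, y_0 y_1 \cdots$ one checks directly that $\psi_{j,n}(ay) = \psi_{j-1,n}(y)$ for $1 \le j \le n-1$, while the $j=0$ slot ``wraps around'' to $\psi_{0,n}(ay) = a\,\psi_{n-1,n}(y)$. Using $\sP^a=\sP$, this wraparound yields the crucial identity $D_0^a = D_{n-1}$: indeed $ay \in D_0$ forces a witnessing $x \in \sP$ to start with $a$, so $x = ax'$ with $x' \in \sP$, and then $ay = \psi_{0,n}(ax') = a\,\psi_{n-1,n}(x')$ gives $y = \psi_{n-1,n}(x') \in D_{n-1}$; the reverse inclusion runs the same computation backwards using $ax' \in \sP$.

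I would then compute $\sP^a$ a second way, directly from the assumed factorization $\sP = D_0 \pr D_1 \pr \cdots \pr D_{n-1}$. By Definition~\ref{interleavingdef}(2), $ay \in \sP$ iff $\psi_{j,n}(ay) \in D_j$ for all $j$. Substituting the prepend-shift formulas and the identity $D_0^a = D_{n-1}$ converts the $n$ membership conditions on $y$ into $\psi_{i,n}(y) \in D_{i+1}$ for $0 \le i \le n-2$ together with $\psi_{n-1,n}(y) \in D_{n-1}$. In other words
\[
\sP^a = D_1 \pr D_2 \pr \cdots \pr D_{n-1} \pr D_{n-1}.
\]
Since $\sP^a = \sP = D_0 \pr D_1 \pr \cdots \pr D_{n-1}$, I now have two $n$-fold interleaving factorizations of the same set. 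Invoking the uniqueness of interleaving factors (Theorem~\ref{thm:DIF2}) and matching slot by slot gives $D_0 = D_1$, $D_1 = D_2$, \ldots, $D_{n-2} = D_{n-1}$, whence all the $D_j$ coincide and $\sP = \sQ^{(\pr n)}$ with $\sQ = D_0$.

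I expect the main obstacle to be isolating the correct structural consequence of the self-loop. The naive elementwise use of $a\sP \subseteq \sP$ only produces the chain of inclusions $D_0 \subseteq D_1 \subseteq \cdots \subseteq D_{n-1}$, which is not enough to force equality; indeed a self-loop alone does not equalize the decimations, so the factorization hypothesis is genuinely needed. The decisive move is to apply $\sP^a = \sP$ at the level of factorizations rather than of individual words, and in particular to verify the ``wraparound'' identity $D_0^a = D_{n-1}$ carefully, since it is precisely this identity that cyclically links the first and last factors and lets the uniqueness theorem collapse the chain to a single common factor.
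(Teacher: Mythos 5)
Your proof is correct, but it is organized around a different key lemma than the paper's proof, so it is worth comparing the two. The paper argues at the level of finite initial blocks: it shows every initial block of $\sP_0$ is an initial block of each $\sP_i$ (by prepending $a^i$ to a witnessing block of $\sP$), and conversely that every initial block of $\sP_i$ is an initial block of $\sP_0$ (by interleaving a witness with the constant word $a^{\infty}$ in the other slots, then using right-resolving to strip the leading $a$'s), and concludes via the initial-block characterization of path sets (Corollary~\ref{thm:initialblocks}). You instead work at the level of whole sets: the hypothesis is used only once, to establish the word path set identity $\sP^a=\sP$, after which the prepend-shift formulas and the wraparound identity $D_0^a=D_{n-1}$ produce a second $n$-fold factorization $\sP=\sP^a=D_1\pr\cdots\pr D_{n-1}\pr D_{n-1}$, and uniqueness of interleaving factors (Theorem~\ref{thm:DIF2}) matches slots to collapse the chain $D_0=D_1=\cdots=D_{n-1}$. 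The two proofs rest on the same mechanism (prepending $a$ and stripping it via right-resolving), but yours buys something real: it avoids all block-level bookkeeping, makes the cyclic-shift structure of the factorization transparent, and in fact proves a more general statement --- any subset $X\subseteq\sA^{\NN}$, closed or not, satisfying $ax\in X \iff x\in X$ for a single fixed letter $a$ admits only self-interleaving factorizations --- with the presentation hypothesis serving solely to verify that property. The paper's version, by contrast, is self-contained at the combinatorial level and exhibits explicit witnesses, at the cost of the slightly delicate padding-and-stripping construction (where, incidentally, its exponent bookkeeping $(a_0)^{i-1}$ versus $(a_0)^i$ contains a minor typo that your set-level argument sidesteps entirely).
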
 

\begin{proof}
 Suppose that the presentation $(\sG, v)$ has a self-loop at vertex $v$ with label $a_0 \in \mathcal{A}$.
Then  $x \in \sP$ implies that $(a_0)^kx \in \sP$ for each $k \ge 1$. 
In consequence:  
\begin{enumerate}
\item 
We have  $\sP \subseteq S(\sP)$, where $S$ denotes the left-shift operator, because $S(a_0x) = x$.  
\item
The $n$-decimation sets $\sP_j= \psi_{j,n}(\sP)$  satisfy the inclusions
$$
\psi_{j,n}(\sP) \subseteq \psi_{j+1, n}(\sP)
$$
for all $j \ge 0$. Indeed if $y \in \psi_{j, n}(\sP)$ then there exists $x \in \sP$ with $\psi_{j,n}(x) =y$.
But now  $a_0 x \in \sP$ and $y = \psi_{j+1, n}(a_0 x)$ so $y \in \psi_{j+1,n}(\sP)$.
\end{enumerate} 

By Theorem ~\ref{thm:initialblocks}, a path set is characterized by its set $\sB^{I}(\sP)$ 
of initial blocks. If we can show that $\mathcal{P}_i$ has the same set of initial blocks as 
$\mathcal{P}_0$ for all $1\leq j \leq n-1$, then
 we will have proven the result, taking $\sQ= \sP_0$.
  By hypothesis there is a right-resolving presentation $(\mathscr{G},v)$ of $\mathcal{P}$ having a self-loop labeled $a_0$ at $v$.
Let $x=x_0x_1x_2\ldots x_m$ be an initial block in  $\mathcal{P}_0$. Then there are words $w_0,w_1,\ldots w_m$, each of length $n-1$, such that 
$x_0w_1x_1w_2x_2\ldots w_mx_m$ is an initial block in $\mathcal{P}$. Then for all $0\leq i\leq n-1$, $a^ix_0w_1x_1w_2x_2\ldots w_mx_m$ is an initial block in $\mathcal{P}$. 
By the definition of interleaving, $x$ is an initial block of $\mathcal{P}_i$ for each $0\leq i\leq n-1$. Thus, any initial block of $\mathcal{P}_0$ is an initial block of all other $\mathcal{P}_j$ with $1 \le j\leq n-1$. 

Now, for some $0\leq i\leq n-1$, let $y=y_0y_1\ldots y_m$ be an initial block of $\mathcal{P}_i$. Since $\mathscr{G}$ has a self loop labeled $a_0$, 
a path traversing the loop infinitely many times presents the point $(a_0)^{\infty}\in\mathcal{P}$.
 Thus, each $\mathcal{P}_j$, $0\leq j\leq n-1$, contains the point $(a_0)^{\infty}$. Using this point from each $\mathcal{P}_j$ except $\mathcal{P}_i$, 
 we can form a point in $\mathcal{P}$ with initial word $(a_0)^{i-1}y_0(a_0)^{n-1}y_1(a_0)^{n-1}y_2\ldots (a_0)^{n-1}y_m$. Since $\mathscr{G}$ is right-resolving, the path presenting this point begins by traversing the $a_0$-labeled self loop $n-1$ times, thus ending at the initial vertex. Therefore, there is a path beginning at the initial vertex and presenting 
 $y_0(a_0)^{n-1}y_1(a_0)^{n-1}y_2\ldots (a_0)^{n-1}y_m$. Therefore, $y$ is an initial block of $\mathcal{P}_0$. Therefore $\sP_0 =\sP_i$ for all $i$. 
 since they have the same initial block set.
 \end{proof} 

\begin{rem}
Certain examples of path sets (denoted $X(1, M)$) studied in
Abram et al  \cite[Section 3.4 and Proposition  5.1]{ABL17}  exhibited interleaving factorizations that were self-interleavings.
The presentations of such $X(1,M)$ have self-loops at the initial vertex, and Theorem \ref{thm:selfloop} applies to
give this result.These path sets arise in the study of intersections of $p$-adic path set fractals, as defined and studied in \cite{AL14b}, and arose in consideration of a problem of Erd\H{o}s, c.f. \cite{Lagarias09}.
\end{rem}

\end{document}